\newcommand {\cL}  {\mathcal{L}}
\newcommand {\cH}  {\mathcal{H}}
\newcommand {\cV}  {\mathcal{V}}
\newcommand {\cW}  {\mathcal{W}}
\DeclareMathOperator{\Real}{Re}
\newcommand\Item[1][]{%
  \ifx\relax#1\relax  \item \else \item[#1] \fi
  \abovedisplayskip=0pt\abovedisplayshortskip=0pt~\vspace*{-1.25\baselineskip}}
\newcommand {\C}        {{\mathbb{C}}}
\newcommand {\R}        {{\mathbb{R}}}
\newcommand {\Hinf}	{{\mathcal H}_{\infty}}
\newcommand {\ri}	{{\mathrm i}}
\newcommand{\sn}{\mathsf{n}}
\newcommand{\sm}{\mathsf{m}}
\renewcommand{\sp}{\mathsf{p}}
\newcommand{\sd}{\mathsf{d}}
\newtheorem{example}[theorem]{Example}
\newtheorem{remark}[theorem]{Remark}
\newtheorem{assumption}[theorem]{Assumption}
\title{A Subspace Framework for ${\mathcal H}_\infty$-Norm Minimization}
\author{
Nicat Aliyev\thanks{E-Mail: \texttt{naliyev@ku.edu.tr}.} \and
Peter Benner\thanks{Max Planck Institute for Dynamics of Complex Technical Systems, Sandtorstra{\ss}e 1, 39106 Magdeburg, Germany, E-Mail: \texttt{benner@mpi-magdeburg.mpg.de}.} \and
Emre Mengi\thanks{Ko\c{c} University, Department of Mathematics, Rumeli Feneri Yolu 34450, Sar{\i}yer, Istanbul, Turkey, E-Mail: \texttt{emengi@ku.edu.tr}.} \and
Matthias Voigt\thanks{Corresponding author, Universit\"at Hamburg, Fachbereich Mathematik, Bereich Optimierung und Approxi\-mation, Bundesstra{\ss}e~55, 20146 Hamburg, Germany, E-Mail: \texttt{matthias.voigt@uni-hamburg.de} and Techni\-sche Universit\"at Berlin, Institut f\"ur Mathematik, Stra{\ss}e des 17. Juni 136, 10623 Berlin, Germany, E-Mail: \texttt{mvoigt@math.tu-berlin.de}.}
}
\begin{document}
\maketitle

\begin{abstract}
We deal with the minimization of the ${\mathcal H}_\infty$-norm of the transfer function of a
parameter-dependent descriptor system over the set of admissible parameter values. Subspace frameworks are proposed for such minimization problems where the involved systems are of large order. The proposed
algorithms are greedy interpolatory approaches inspired by our recent work 
[Aliyev et al., SIAM J. Matrix Anal. Appl., 38(4):1496--1516, 2017] for the 
computation of the ${\mathcal H}_\infty$-norm. In this work, we minimize the ${\mathcal H}_\infty$-norm 
of a reduced-order parameter-dependent system obtained by two-sided restrictions
onto certain subspaces. Then we expand the subspaces so that Hermite interpolation
properties hold between the full and reduced-order system at the optimal
parameter value for the reduced order system. We formally establish the superlinear
convergence of the subspace frameworks 
under some smoothness assumptions. The fast convergence
of the proposed frameworks in practice is illustrated by several large-scale systems. 
\end{abstract}

\begin{keywords}
$\cH_\infty$-norm, large-scale, singular values, Hermite
interpolation, descriptor systems, model order reduction, greedy search,
reduced basis.
\end{keywords}

\begin{AMS}
34K17, 65D05, 65F15, 65L80, 90C06, 90C26, 90C31, 93C05, 93D09
\end{AMS}

\section{Introduction}
In this work we are concerned with the minimization of the $\Hinf$-norm of a parameter-dependent descriptor system of the form
\begin{equation}\label{eq:desc_sys}
  \begin{split}
    \tfrac{\mathrm{d}}{\mathrm{d}t}E(\mu) x(\mu;t) &= A(\mu) x(\mu;t) + B(\mu) u(\mu;t), \\
                                          y(\mu;t) &= C(\mu) x(\mu;t).
  \end{split}
\end{equation}
Here, for an open and bounded set $\Omega \subseteq {\mathbb R}^{\sd}$, $E,\,A: \Omega \rightarrow {\mathbb R}^{\sn\times \sn}$, 
$B: \Omega \rightarrow {\mathbb R}^{\sn \times \sm}$, $C: \Omega \rightarrow {\mathbb R}^{\sp \times \sn}$ are matrix-valued 
functions defined by 
\begin{equation}\label{eq:matrix_value}
	\begin{split}
		E(\mu) & := f_1(\mu) E_1 + \ldots + f_{\kappa_E}(\mu) E_{\kappa_E}, \\
		A(\mu) & := g_1(\mu) A_1 + \ldots + g_{\kappa_A}(\mu) A_{\kappa_A}, \\
		B(\mu) & := h_1(\mu) B_1 + \ldots + h_{\kappa_B}(\mu) B_{\kappa_B}, \\
		C(\mu) & := k_1(\mu) C_1 + \ldots + k_{\kappa_C}(\mu) C_{\kappa_C},
	\end{split}
\end{equation}
for given matrices $E_1,\,\ldots,\,E_{\kappa_E},\,A_1,\,\ldots,\,A_{\kappa_A} \in {\mathbb R}^{\sn\times \sn}$, 
$B_1,\,\ldots,\,B_{\kappa_B} \in {\mathbb R}^{\sn\times \sm}$, $C_1,\,\ldots,\,C_{\kappa_C} \in {\mathbb R}^{\sp \times \sn}$, 
and real-analytic functions $f_1,\,\ldots,\,f_{\kappa_E},\,g_1\,\ldots,\,g_{\kappa_A},\,h_1,\,\ldots,\,h_{\kappa_B}$,
$k_1,\,\ldots,\,k_{\kappa_C}: \Omega \rightarrow {\mathbb R}$. The functions $x(\mu;\cdot): \R \to \R^\sn$, $u(\mu;\cdot):\R \to \R^\sm$, 
and $y(\mu;\cdot):\R \to \R^\sp$ are called (generalized) state, input, and output, respectively. 
If for a \emph{fixed} $\mu \in \Omega$, the matrix pencil $s E(\mu) - A(\mu)$ is \emph{regular} (that is, there exists a 
$\lambda \in \C$ with $\det(\lambda E(\mu) - A(\mu)) \neq 0$), we define the transfer function of \eqref{eq:desc_sys} by
\[
	H[\mu](s) := C(\mu) D(\mu,s)^{-1} B(\mu)\quad \text{with} \quad D(\mu,s) := s E(\mu) - A(\mu).
\]
For a \emph{fixed} $\mu$, the function $H[\mu](s)$ is real-rational in the indeterminate $s$, consequently, we use the notation $H[\mu](s) \in \mathbb{R}(s)^{\sp \times \sm}$. 
Observe that, since $H[\mu]$ is rational, it is analytic almost everywhere in $\C$. 

We define the following normed spaces of real-rational functions:
\begin{align*}
 \cL_\infty^{\sp \times \sm} &:= \left\{ H(s) \in \R(s)^{\sp \times \sm} \; \bigg| \; \sup_{\omega \in \R} \left\| H(\ri\omega) \right\|_2 < \infty \right\}, \\
 \cH_\infty^{\sp \times \sm} &:= \left\{ H(s) \in \R(s)^{\sp \times \sm} \; \bigg| \; \sup_{\lambda \in \C^+} \left\| H(\lambda) \right\|_2 < \infty \right\},
\end{align*}
where $\C^+ := \left\{ \lambda \in \C \; | \; \Real(\lambda) > 0 \right\}$.
For $H \in \cL_\infty^{\sp \times \sm}$, the $\cL_\infty$-norm is defined by
\begin{equation*}
 \left\| H \right\|_{\cL_\infty} := \sup_{\omega \in \R} \left\| H(\ri\omega) \right\|_2 = \sup_{\omega \in \R} \sigma(H(\ri \omega)),
\end{equation*}
where $\sigma(\cdot)$ denotes the largest singular value of its matrix argument. We assume throughout this text that the functions
under consideration are in $\cH_\infty^{\sp \times \sm}$. For such a function $H \in \cH_\infty^{\sp \times \sm}$, by employing the maximum principle for analytic functions, one can show that the $\cH_\infty$-norm is equivalent to the $\cL_\infty$-norm, that is
\begin{equation*}
 \left\| H \right\|_{\cH_\infty} := \sup_{s \in \C^+} \left\| H(s) \right\|_2 
 				= 
	\sup_{s \in \partial\C^+} \left\| H(s) \right\|_2 = \sup_{\omega \in \R} \sigma(H(\ri \omega)). 
\end{equation*}

In this work, we consider the problem of minimizing the $\cH_\infty$-norm of $H[\mu]$ over $\mu$ that belongs to a compact subset $\underline{\Omega}$ of $\Omega$, but keeping the assumption that $H[\mu] \in \cH_\infty^{\sp \times \sm}$ for every $\mu \in \underline{\Omega}$. The latter assumption holds for all of the examples that we consider later in this paper; most of these examples arise from real applications. 
Formally, we aim to determine $\mu_\ast \in \underline{\Omega}$ such that
\begin{equation*}
 \left\| H[{\mu_\ast}] \right\|_{\cH_\infty}  =  \min_{\mu \in \underline{\Omega}} \left\|H[\mu]\right\|_{\cH_{\infty}}.
\end{equation*}

Minimizing the $\cH_\infty$-norm of a parameter-dependent system is an important task in control engineering. 
For example, the parameter vector $\mu$ may consist of the design variables of a feedback controller. Then it is desirable to design an optimal $\cH_\infty$-controller that minimizes the influence of a noisy input signal to the regulated output, which corresponds to minimizing the $\cH_\infty$-norm of a closed-loop (parameter-dependent) transfer function, see, e.\,g., \cite{ZhoDG96} and the references therein. Note that in the latter application, it is normally further imposed that the controller stabilizes the closed-loop system. This condition does not play a prominent role here, but efficient stability checks would be needed for controller design. Other applications for $\cH_\infty$-norm minimization arise in the optimization of dynamic flow networks \cite{JohWSJC17}, parameter identification \cite{VizMPL16}, and model reduction \cite{VarP01}.

We focus on the large-scale setting, that is when $\sn$ is large. We additionally impose the condition that the numbers of inputs and outputs are relatively small, i.\,e., $\sn \gg \sm,\,\sp$. Here we present subspace frameworks that are inspired by our previous work \cite{Aliyev2017}. The proposed frameworks converge fast with respect to the subspace dimension. We provide a theoretical analysis which explains 
this convergence behavior and confirm our theoretical findings in practice by means of several numerical experiments.

\textbf{Outline.} The subspace frameworks are formally introduced in the next section. We first provide a 
basic greedy framework for ${\mathcal H}_\infty$-norm minimization in Algorithm~\ref{alg1}.
This framework reduces the order of the full-order system by employing two-sided restrictions to certain subspaces. It performs the ${\mathcal H}_\infty$-norm minimization on the reduced system,
then expands the restriction subspaces so that Hermite interpolation properties hold
between the full and reduced-order system at the optimal parameter value
for the reduced system. An extension of the basic framework is proposed in 
Algorithm~\ref{alg2}. There Hermite interpolation properties do not only hold at the optimal parameter value
for the reduced system, but also at nearby points. In Section \ref{sec:ROC}, we formally show 
that the basic subspace framework when there is only one parameter, and the extended framework
converge with a superlinear rate under some smoothness assumptions at the minimizer.
The performance of proposed basic subspace framework and its rate of convergence are illustrated for several examples in Section \ref{sec:experiments}.
As we report in the end, with the proposed subspace frameworks, only a few seconds are required
for the minimization of the ${\mathcal H}_{\infty}$-norm of a parameter-dependent system of order $10^4$,
in contrast to an approach that does not make use of reductions. 

\section{Subspace Frameworks}\label{sec:SF_define}
To deal with the large-scale problems described in the introduction, we employ two-sided restrictions in the flavor of the practice we followed for large-scale $\Hinf$-norm computation 
in \cite{Aliyev2017}. We choose two subspaces $\cV,\,\cW \subseteq \C^\sn$ of the same dimension, 
as well as matrices $V,\,W \in \C^{\sn \times \mathsf{k}}$ whose columns form orthonormal bases for these subspaces,
and define the reduced problem by
\begin{equation*}
  \begin{split}
		E^{V,W}(\mu) & := f_1(\mu) W^* E_1 V + \ldots + f_{\kappa_E}(\mu) W^* E_{\kappa_E} V, \\
		A^{V,W}(\mu) & := g_1(\mu) W^* A_1 V + \ldots + g_{\kappa_A}(\mu) W^* A_{\kappa_A} V, \\
		B^W(\mu) & := h_1(\mu) W^* B_1 + \ldots + h_{\kappa_B}(\mu) W^* B_{\kappa_B}, \\
		C^V(\mu) & := k_1(\mu) C_1 V + \ldots + k_{\kappa_C}(\mu) C_{\kappa_C} V.
	\end{split}
\end{equation*}
Associated with this system, there is the reduced transfer function
\[
   H^{\cV,\cW}[\mu](s):=C^{V}(\mu) D^{V,W}(\mu,s)^{-1} B^{W}(\mu) \quad \text{with} \quad D^{V,W}(\mu,s) := s E^{V,W}(\mu)-A^{V,W}(\mu)
\]
which turns out to be independent of the particular choice of the bases for $\cV$ and $\cW$.
Our subspace frameworks are based on the repeated minimization of $\left\| H^{{\mathcal V}, {\mathcal W}}(\mu) \right\|_{{\mathcal H}_\infty}$ for appropriate choices of the subspaces ${\mathcal V},\, {\mathcal W}$. 

\begin{algorithm}[tb]
 \begin{algorithmic}[1]
 
\REQUIRE{Matrices $E_1,\,\dots,\,E_{\kappa_E} \in \mathbb{R}^{\sn\times \sn}$,
 $A_1,\,\dots,\,A_{\kappa_A} \in \mathbb{R}^{\sn\times \sn}$, $B_1,\,\dots,\,B_{\kappa_B} \in \mathbb{R}^{\sn\times \sm}$, $C_1,\,\dots,\,C_{\kappa_C} \in \mathbb{R}^{\sp\times \sn}$ and functions 
 $f_1,\,\ldots,\,f_{\kappa_E},\,g_1,\,\ldots,\,g_{\kappa_A},\,h_1,\,\ldots,\,h_{\kappa_B},\, k_1,\,\ldots,\,k_{\kappa_C}$ as in \eqref{eq:matrix_value}.}
\ENSURE{Sequences $\big\{ \mu^{(k)} \big\}$, $\big\{ \omega^{(k)} \big\}$.}
\STATE Choose initial subspace ${\mathcal V}_0,\, {\mathcal W}_0 \subseteq \C^\sn$.
\FOR{$k = 1,\,2,\,\dots$}
	\STATE  $\mu^{(k)} \gets  \arg \min_{\mu \in \underline{\Omega}} \left\| H^{{\mathcal V}_{k-1}, {\mathcal W}_{k-1}} [\mu] \right\|_{{\mathcal H}_\infty}$. \label{line:opt_mu}
	\STATE  $\omega^{(k)} \gets \arg \max_{\omega \in {\mathbb R}\cup\{\infty\}} \sigma\left(\mu^{(k)},\omega\right)$. \label{line:opt_omega}
	\IF{$\sm = \sp$} \label{line1beg}
		\STATE $\widetilde{V}_k \gets D\big(\mu^{(k)},\mathrm{i}\omega^{(k)}\big)^{-1}B\big(\mu^{(k)}\big)$.
		\STATE $\widetilde{W}_k \gets D\big(\mu^{(k)},\mathrm{i}\omega^{(k)}\big)^{-\ast}C\big(\mu^{(k)}\big)^\ast$.
	\ELSIF{$\sm < \sp$}
		\STATE $\widetilde{V}_k \gets D\big(\mu^{(k)},\mathrm{i}\omega^{(k)}\big)^{-1}B\big(\mu^{(k)}\big)$.
		\STATE $\widetilde{W}_k \gets D\big(\mu^{(k)},\mathrm{i}\omega^{(k)}\big)^{-\ast}C\big(\mu^{(k)}\big)^\ast H\big[\mu^{(k)}\big]\big(\ri \omega^{(k)}\big)$.
	\ELSE
		\STATE $\widetilde{V}_k \gets D\big(\mu^{(k)},\mathrm{i}\omega^{(k)}\big)^{-1}B\big(\mu^{(k)}\big) H\big[\mu^{(k)}\big]\big(\ri \omega^{(k)}\big)^\ast$
		\STATE $\widetilde{W}_k \gets D\big(\mu^{(k)},\mathrm{i}\omega^{(k)}\big)^{-\ast}C\big(\mu^{(k)}\big)^\ast$.
	\ENDIF \label{line1end}
	\STATE ${\mathcal V}_k \gets {\mathcal V}_{k-1} \oplus {\rm Col}\big( \widetilde{V}_k \big)$ and 	${\mathcal W}_k \gets {\mathcal W}_{k-1} \oplus {\rm Col}\big(\widetilde{W}_k\big)$.
\ENDFOR
\end{algorithmic}
\caption{The basic greedy algorithm for $\Hinf$-norm minimization}
\label{alg1}
\end{algorithm}


The basic greedy framework is given in Algorithm~\ref{alg1} and throughout the restof this work, we use the short-hand notations
\[
	 \sigma(\mu,\omega) := \sigma (H[\mu](\ri \omega)) \quad \text{and} \quad \sigma^{{\mathcal V}, {\mathcal W}}(\mu,\omega) := \sigma\left( H^{{\mathcal V}, {\mathcal W}}[\mu](\ri \omega) \right).
\]
We will also make frequent use of certain partial derivatives of these functions, where we denote the variables that we differentiate by subscripts, e.\,g., $\sigma_\omega(\cdot,\cdot)$ denotes the first partial derivative with respect to the argument $\omega$, whereas $\sigma_\mu(\cdot,\cdot)$ denotes the gradient with respect to $\mu$. 
Additionally, we reserve the notations $\sigma_2(\mu,\omega)$ and $\sigma^{{\mathcal V}, {\mathcal W}}_2(\mu,\omega)$
for the second largest singular values of $H[\mu](\ri \omega)$ and $H^{{\mathcal V},{\mathcal W}}[\mu](\ri \omega)$, respectively.
At every iteration, the basic framework minimizes the $\Hinf$-norm of a reduced problem for a given pair of subspaces in line \ref{line:opt_mu}. Then it first computes an $\omega$ such that $\| H[\mu] \|_{{\mathcal H}_\infty} = \sigma(\omega, \mu)$ in line \ref{line:opt_omega} at the optimal $\mu$ value for the reduced problem, and expands the subspaces so that the following Hermite interpolation properties hold at the optimal $\mu$, $\omega$, which are immediate from \cite[Theorem~2.1]{Aliyev2017}, \cite[Theorem~1]{Gugercin2009}.
\begin{lemma}\label{thm:basic_int}
The following assertions hold regarding Algorithm~\ref{alg1} for each $j = 1,\,\ldots,\,k$:\vspace{1ex}
\begin{enumerate}
    \item[\bf (i)] It holds that $\left\| H\big[\mu^{(j)}\big] \right\|_{{\mathcal H}_\infty} = \sigma\big(\mu^{(j)},\omega^{(j)}\big) = \sigma^{ {\mathcal V}_k, {\mathcal W}_k } \big(\mu^{(j)}, \omega^{(j)}\big).$ \vspace*{1ex}
    \item[\bf (ii)] It holds that $\sigma_2\big(\mu^{(j)},\omega^{(j)}\big) = \sigma^{ {\mathcal V}_k, {\mathcal W}_k }_2 \big(\mu^{(j)}, \omega^{(j)}\big).$ \vspace*{1ex}
    \item[\bf (iii)] If the largest singular value $\sigma\big(\mu^{(j)},\omega^{(j)}\big)$ of $H[\mu^{(j)}](\ri \omega^{(j)})$ is simple, then
    \[
    	\nabla \big\| H\big[\mu^{(j)}\big] \big\|_{{\mathcal H}_\infty} = \sigma_\mu\big(\mu^{(j)},\omega^{(j)}\big) 
    				= \sigma^{ {\mathcal V}_k, {\mathcal W}_k }_\mu \big(\mu^{(j)},\omega^{(j)}\big).
	\]
     \item[\bf (iv)] We have $ 
     	\sigma_\omega \big(\mu^{(j)},\omega^{(j)}\big) = \sigma^{ {\mathcal V}_k,  {\mathcal W}_k}_\omega \big(\mu^{(j)},\omega^{(j)}\big) = 0.
	$
\end{enumerate}
\end{lemma}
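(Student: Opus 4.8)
The plan is to reduce everything to the standard Hermite interpolation results for projection-based model reduction, namely \cite[Theorem~2.1]{Aliyev2017} and \cite[Theorem~1]{Gugercin2009}, applied at the specific parameter value $\mu^{(j)}$ and frequency $\omega^{(j)}$ produced in iterations $j \le k$. The crucial observation is that, by the definition of the subspace updates in lines \ref{line1beg}--\ref{line1end} of Algorithm~\ref{alg1}, the columns of $D(\mu^{(j)},\mathrm{i}\omega^{(j)})^{-1}B(\mu^{(j)})$ (possibly post-multiplied by $H[\mu^{(j)}](\mathrm{i}\omega^{(j)})^\ast$) lie in $\mathrm{Col}(\widetilde V_j) \subseteq {\mathcal V}_k$, and likewise $D(\mu^{(j)},\mathrm{i}\omega^{(j)})^{-\ast}C(\mu^{(j)})^\ast$ (possibly post-multiplied by $H[\mu^{(j)}](\mathrm{i}\omega^{(j)})$) lies in $\mathrm{Col}(\widetilde W_j) \subseteq {\mathcal W}_k$; here I use that ${\mathcal V}_k = {\mathcal V}_{k-1}\oplus\mathrm{Col}(\widetilde V_k) \supseteq \mathrm{Col}(\widetilde V_j)$ monotonically for $j\le k$, and similarly for ${\mathcal W}_k$. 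So the relevant interpolation subspace conditions are satisfied at the point $(\mu^{(j)},\mathrm{i}\omega^{(j)})$ for every $j \le k$, regardless of how much further the subspaces have grown.

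Next I would invoke the interpolation result: for a fixed $\mu = \mu^{(j)}$, treating $H[\mu^{(j)}](\cdot)$ and $H^{{\mathcal V}_k,{\mathcal W}_k}[\mu^{(j)}](\cdot)$ as rational matrix functions of $s$ obtained by two-sided projection, the inclusion of $D(\mu^{(j)},\mathrm{i}\omega^{(j)})^{-1}B(\mu^{(j)})$-type columns in $\mathcal{V}_k$ and the corresponding left columns in $\mathcal{W}_k$ forces a (tangential, when $\sm\neq\sp$) Hermite interpolation condition at $s = \mathrm{i}\omega^{(j)}$: the values and first derivatives in $s$ of the two transfer functions agree along the appropriate directions. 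Part (i) then follows because $\omega^{(j)}$ was chosen in line \ref{line:opt_omega} as a maximizer of $\sigma(\mu^{(j)},\cdot)$, so $\sigma(\mu^{(j)},\omega^{(j)}) = \|H[\mu^{(j)}]\|_{{\mathcal H}_\infty}$; the left/right interpolation directions in the updates are precisely the leading left/right singular vectors of $H[\mu^{(j)}](\mathrm{i}\omega^{(j)})$ (this is the content of how $H[\mu^{(j)}](\mathrm{i}\omega^{(j)})$ enters the formulas in the $\sm\neq\sp$ cases), so the matched value is exactly $\sigma(\mu^{(j)},\omega^{(j)})$. Part (ii) is the analogous statement for the second singular value and follows from the same interpolation matching at $\mathrm{i}\omega^{(j)}$ together with the fact that matching the full matrix value (in the square case) or enough tangential directions (in the non-square case) pins down the second singular value as well.

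For parts (iii) and (iv), I would differentiate the matched quantities. Writing $\sigma(\mu,\omega) = u^\ast H[\mu](\mathrm{i}\omega)v$ for unit leading singular vectors $u,v$, standard eigenvalue/singular value perturbation theory gives, under simplicity of $\sigma(\mu^{(j)},\omega^{(j)})$, that $\sigma_\mu$ and $\sigma_\omega$ are expressible through $u^\ast (\partial_\mu H)v$ and $u^\ast(\partial_\omega H)v$; since the Hermite conditions match both the value and the first derivative of the transfer function (in $s$, hence in $\omega$, and also the analytic dependence survives the projection so the $\mu$-derivative is matched too — this is where I would lean on \cite[Theorem~2.1]{Aliyev2017}), the reduced and full first-order sensitivities coincide, giving the second equalities in (iii) and (iv). The first equality in (iii), $\nabla\|H[\mu^{(j)}]\|_{{\mathcal H}_\infty} = \sigma_\mu(\mu^{(j)},\omega^{(j)})$, uses that $\omega\mapsto\sigma(\mu,\omega)$ is maximized at $\omega^{(j)}$ for $\mu=\mu^{(j)}$ with $\sigma_\omega = 0$ there, so by the envelope theorem the $\mu$-gradient of the max-over-$\omega$ equals the partial $\sigma_\mu$ at the optimal frequency; and (iv) itself, $\sigma_\omega(\mu^{(j)},\omega^{(j)}) = 0$, is just the first-order optimality condition for $\omega^{(j)}$ being an interior maximizer of $\sigma(\mu^{(j)},\cdot)$ (with the case $\omega^{(j)}=\infty$ handled by the usual change of variables), and the reduced version $\sigma^{{\mathcal V}_k,{\mathcal W}_k}_\omega = 0$ then follows from the derivative matching established for (iii). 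The main obstacle I anticipate is bookkeeping in the non-square cases $\sm\neq\sp$: there one only has tangential (one-sided directional) interpolation rather than full matrix interpolation, so I must check carefully that the single matched direction is exactly the one carrying the largest singular value and its derivative — but this is precisely why the algorithm post-multiplies by $H[\mu^{(j)}](\mathrm{i}\omega^{(j)})$ or its adjoint, and the argument should close once this alignment is spelled out.
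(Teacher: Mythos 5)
Your proposal is correct and follows essentially the same route as the paper, which gives no detailed argument but declares the lemma ``immediate from'' the Hermite interpolation results of Aliyev et al.\ (Theorem~2.1) and Gugercin et al.\ (Theorem~1), supplemented only by the observation that $\omega^{(j)}$ being a maximizer of $\sigma(\mu^{(j)},\cdot)$ yields $\sigma_\omega(\mu^{(j)},\omega^{(j)})=0$ and that derivative matching transfers this to the reduced function. Your additional bookkeeping (monotonicity of the subspaces so the conditions persist for $j\le k$, the envelope argument for the first equality in (iii), and the tangential-direction issue when $\sm\neq\sp$) fills in exactly the steps the paper leaves implicit.
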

Note that in part \textbf{(iv)} of the lemma above $\sigma_{\omega}\big(\mu^{(j)},\omega^{(j)}\big) = 0$ holds even if 
$\sigma\big(\mu^{(j)},\omega^{(j)}\big)$ is not simple, since $\omega^{(j)}$ is a maximizer of $\sigma\big(\mu^{(j)}, \cdot \big)$.
The equality $\sigma^{ {\mathcal V}_k,  {\mathcal W}_k}_\omega \big(\mu^{(j)},\omega^{(j)}\big) = 0$ follows
from the interpolation properties between $H[\mu](\ri \omega)$, $H^{{\mathcal V}_k, {\mathcal W}_k}[\mu](\ri \omega)$
and their first derivatives at $\mu = \mu^{(j)}, \: \omega = \omega^{(j)}$.

We also propose an extended version of the basic greedy framework in Algorithm~\ref{alg2}. 
For its description we define $e_{rq} := 1/\sqrt{2} (e_r + e_q)$ if $r \neq q$ and $e_{rr} :=  e_r$, where $e_r$ is the $r$-th column of the $\sd\times \sd$ identity matrix. The description may look complicated at first, but the only main difference is that it includes additional vectors in the subspaces in lines \ref{line:adjacent_expand_st}--\ref{line:adjacent_expand_end} to interpolate 
not only at the minimizers of the reduced problems, but also at nearby points. The motivation for the inclusion of these additional vectors is to draw a
theoretical conclusion about the accuracy of the second derivatives of the reduced singular value functions $\sigma^{{\mathcal V}_k, {\mathcal W}_k}(\cdot,\cdot)$
in approximating $\sigma(\cdot,\cdot)$ in the multivariate case. In practice, we observe that both Algorithm~\ref{alg1} and Algorithm~\ref{alg2} converge rapidly. But in the multivariate case, the inclusion of the additional vectors in the subspaces in Algorithm~\ref{alg2} makes its rate of convergence analysis neater. The interpolation properties of the extended framework are listed in the next result. Once again, these properties 
are immediate from \cite[Theorem~2.1]{Aliyev2017}.
\begin{lemma}\label{thm:extended_int}
The iterates $\big\{ \mu^{(k)} \big\}$, $\big\{ \omega^{(k)} \big\}$ by Algorithm~\ref{alg2} satisfy the assertions \textbf{(i)}--\textbf{(iii)} of Lemma~\ref{thm:basic_int} for each $j = 1,\,\dots,\,k$. Additionally, for each $j = 1,\, \dots,\, k$, $r = 1,\,\dots,\,\sd$, and $q = r,\,\dots,\,\sd$, we have the following: \vspace*{1ex}
\begin{enumerate}
\item[\bf (i)] It holds that $\left\| H\big[\mu^{(j,rq)}\big] \right\|_{{\mathcal H}_\infty} = \sigma\big(\mu^{(j,rq)},\omega^{(j,rq)}\big) = 	\sigma^{ {\mathcal V}_k,  {\mathcal W}_k } \big(\mu^{(j,rq)},\omega^{(j,rq)}\big)$. \vspace*{1ex}
\item[\bf (ii)] If the largest singular value $\sigma \big(\mu^{(j,rq)},\omega^{(j,rq)}\big)$ of $H[\mu^{(j,rq)}](\ri \omega^{(j,rq)})$ is simple, then
\[
	\nabla \big\| H\big[\mu^{(j,rq)}\big] \big\|_{{\mathcal H}_\infty} =  \sigma_\mu \big(\mu^{(j,rq)},\omega^{(j,rq)}\big) 
				= \sigma^{ {\mathcal V}_k, {\mathcal W}_k }_\mu \big(\mu^{(j,rq)},\omega^{(j,rq)}\big).
\]
\item[\bf (iii)] It holds that $
	\sigma_\omega \big(\mu^{(j,rq)},\omega^{(j,rq)}\big) =  \sigma^{ {\mathcal V}_k,  {\mathcal W}_k}_\omega \big(\mu^{(j,rq)},\omega^{(j,rq)}\big) = 0.
 	$
\end{enumerate}
\end{lemma}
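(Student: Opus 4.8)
The plan is to derive everything from the interpolation result \cite[Theorem~2.1]{Aliyev2017} (together with \cite[Theorem~1]{Gugercin2009}), exactly as was done for Lemma~\ref{thm:basic_int}, but applied now at the enlarged collection of interpolation nodes used in Algorithm~\ref{alg2}. First I would recall the structure of the reduced transfer function: since the columns of $\widetilde V_k$ (resp.\ $\widetilde W_k$) include $D(\mu^{(j)},\mathrm{i}\omega^{(j)})^{-1}B(\mu^{(j)})$ (resp.\ $D(\mu^{(j)},\mathrm{i}\omega^{(j)})^{-\ast}C(\mu^{(j)})^\ast$) up to right multiplication by a constant nonsingular matrix, the standard rational-interpolation argument gives $H^{{\mathcal V}_k,{\mathcal W}_k}[\mu](s) = H[\mu](s)$ together with matching of the relevant first-order $\mu$- and $s$-derivatives at every node $(\mu^{(j)},\mathrm{i}\omega^{(j)})$ that has been fed into the subspaces by iteration $k$. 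The assertions \textbf{(i)}--\textbf{(iii)} of Lemma~\ref{thm:basic_int} for the nodes $(\mu^{(j)},\omega^{(j)})$, $j=1,\dots,k$, then follow verbatim: a Hermite (value plus gradient) match of the matrix-valued functions transfers, via the chain rule for the largest singular value at a point where it is simple, to a Hermite match of $\sigma$ and of $\sigma_2$, and hence of the $\cH_\infty$-norm and its gradient; the identity $\|H[\mu^{(j)}]\|_{\cH_\infty} = \sigma(\mu^{(j)},\omega^{(j)})$ is just line~\ref{line:opt_omega}.

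Next I would turn to the additional claims at the perturbed nodes $(\mu^{(j,rq)},\omega^{(j,rq)})$. The point is simply that lines~\ref{line:adjacent_expand_st}--\ref{line:adjacent_expand_end} of Algorithm~\ref{alg2} append to ${\mathcal V}_k$ and ${\mathcal W}_k$ exactly the vectors $D(\mu^{(j,rq)},\mathrm{i}\omega^{(j,rq)})^{-1}B(\mu^{(j,rq)})$ and $D(\mu^{(j,rq)},\mathrm{i}\omega^{(j,rq)})^{-\ast}C(\mu^{(j,rq)})^\ast$ (again up to the analogous constant right factors in the non-square cases, and using that $\mu^{(j,rq)}$ is a small displacement of $\mu^{(j)}$ along $e_{rq}$ with $\omega^{(j,rq)}$ the associated maximizing frequency). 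So \cite[Theorem~2.1]{Aliyev2017} applies once more, now at the node $(\mu^{(j,rq)},\mathrm{i}\omega^{(j,rq)})$, yielding $H^{{\mathcal V}_k,{\mathcal W}_k}[\mu](s)=H[\mu](s)$ there together with first-derivative matches in $\mu$ and $s$. Part~\textbf{(i)} of the present lemma is then immediate (the middle equality is again the definition of $\omega^{(j,rq)}$ in the algorithm), part~\textbf{(ii)} follows from the chain rule under the simplicity hypothesis exactly as in Lemma~\ref{thm:basic_int}\,\textbf{(iii)}, and part~\textbf{(iii)} follows from the $s$-derivative match combined with the fact that $\omega^{(j,rq)}$ is a maximizer of $\sigma(\mu^{(j,rq)},\cdot)$, so that $\sigma_\omega(\mu^{(j,rq)},\omega^{(j,rq)})=0$ and the reduced function inherits the same vanishing first $\omega$-derivative.

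The only genuine care needed — and the step I expect to be the main (if modest) obstacle — is bookkeeping: verifying that at iteration $k$ \emph{all} the nodes indexed by $j\le k$ (and, for Algorithm~\ref{alg2}, by the triples $(j,rq)$ with $j\le k$) have indeed been incorporated into ${\mathcal V}_k,{\mathcal W}_k$, since the subspaces are only ever enlarged; and that the constant right-multipliers appearing in the non-square cases $\sm<\sp$ and $\sm>\sp$ (namely $H[\mu^{(j)}](\mathrm{i}\omega^{(j)})$ or its conjugate transpose) are nonsingular at these nodes, which holds because $\sigma(\mu^{(j)},\omega^{(j)})=\|H[\mu^{(j)}]\|_{\cH_\infty}>0$ in the nondegenerate situation, so that $\Col(\widetilde V_k)$, $\Col(\widetilde W_k)$ are the claimed Krylov-type spaces and the hypotheses of \cite[Theorem~2.1]{Aliyev2017} are met. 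Once this is in place there is nothing further to prove, so I would keep the write-up brief and point to Lemma~\ref{thm:basic_int} for the details of the chain-rule arguments that are common to both lemmas.
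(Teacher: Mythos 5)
Your proposal is correct and follows the same route as the paper: the paper's proof of this lemma consists precisely of observing that the claims are immediate from \cite[Theorem~2.1]{Aliyev2017} applied at the additional interpolation nodes $\big(\mu^{(j,rq)},\ri\omega^{(j,rq)}\big)$ fed into ${\mathcal V}_k,\,{\mathcal W}_k$ by lines~\ref{line:adjacent_expand_st}--\ref{line:adjacent_expand_end}, exactly as in Lemma~\ref{thm:basic_int}. Your additional bookkeeping remarks (monotone growth of the subspaces, nonsingularity of the right factors $H[\mu^{(j,rq)}](\ri\omega^{(j,rq)})$ in the non-square cases, and the chain-rule transfer to the singular-value functions) only make explicit what the paper leaves implicit.
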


\begin{algorithm}[tb]
 \begin{algorithmic}[1]
\REQUIRE{Matrices $E_1,\,\dots,\,E_{\kappa_E} \in \mathbb{R}^{\sn\times \sn}$,
 $A_1,\,\dots,\,A_{\kappa_A} \in \mathbb{R}^{\sn\times \sn}$, $B_1,\,\dots,\,B_{\kappa_B} \in \mathbb{R}^{\sn\times \sm}$, $C_1,\,\dots,\,C_{\kappa_C} \in \mathbb{R}^{\sp\times \sn}$ and functions 
 $f_1,\,\ldots,\,f_{\kappa_E},\,g_1,\,\ldots,\,g_{\kappa_A},\,h_1,\,\ldots,\,h_{\kappa_B},\, k_1,\,\ldots,\,k_{\kappa_C}$ as in \eqref{eq:matrix_value}.}
\ENSURE{Sequences $\big\{ \mu^{(k)} \big\}$, $\big\{ \omega^{(k)} \big\}$.}
\STATE Choose initial subspace ${\mathcal V}_0, {\mathcal W}_0 \subseteq \C^\sn$.
\FOR{$k = 1,\,2,\,\dots$}
\STATE  $\mu^{(k)} \gets  \arg \min_{\mu \in \underline{\Omega}} \left\| H^{{\mathcal V}_{k-1}, {\mathcal W}_{k-1}} [\mu] \right\|_{{\mathcal H}_\infty}$.
	\STATE  $\omega^{(k)} \gets \arg \max_{\omega \in {\mathbb R}\cup\{\infty\}} \sigma\left(\mu^{(k)},\omega\right)$.
	\IF{$\sm = \sp$} \label{line2beg}
		\STATE $\widetilde{V}_k \gets D\big(\mu^{(k)},\mathrm{i}\omega^{(k)}\big)^{-1}B\big(\mu^{(k)}\big)$.
		\STATE $\widetilde{W}_k \gets D\big(\mu^{(k)},\mathrm{i}\omega^{(k)}\big)^{-\ast}C\big(\mu^{(k)}\big)^\ast$.
	\ELSIF{$\sm < \sp$}
		\STATE $\widetilde{V}_k \gets D\big(\mu^{(k)},\mathrm{i}\omega^{(k)}\big)^{-1}B\big(\mu^{(k)}\big)$.
		\STATE $\widetilde{W}_k \gets D\big(\mu^{(k)},\mathrm{i}\omega^{(k)}\big)^{-\ast}C\big(\mu^{(k)}\big)^\ast H\big[\mu^{(k)}\big]\big(\ri \omega^{(k)}\big)$.
	\ELSE
		\STATE $\widetilde{V}_k \gets D\big(\mu^{(k)},\mathrm{i}\omega^{(k)}\big)^{-1}B\big(\mu^{(k)}\big) H\big[\mu^{(k)}\big]\big(\ri \omega^{(k)}\big)^\ast$.
		\STATE $\widetilde{W}_k \gets D\big(\mu^{(k)},\mathrm{i}\omega^{(k)}\big)^{-\ast}C\big(\mu^{(k)}\big)^\ast$.
	\ENDIF \label{line2end}
	\STATE ${\mathcal V}_k \gets {\mathcal V}_{k-1} \oplus {\rm Col}\big( \widetilde{V}_k \big)$ and ${\mathcal W}_k \gets {\mathcal W}_{k-1} \oplus {\rm Col}\big(\widetilde{W}_k\big)$.
	\IF{$k \geq 2$} \label{line:adjacent_expand_st}
	\STATE $h^{(k)} \gets \big\| \mu^{(k)} - \mu^{(k-1)} \big\|_2$. 
	\FOR{$r = 1,\,2,\, \dots,\, \sd$}
		\FOR{$q = r,\, \dots,\, \sd$} 
			\STATE $\mu^{(k,rq)} \gets \mu^{(k)} + h^{(k)} e_{rq}$.
			\STATE $\omega^{(k,rq)} \gets \arg \max_{\omega \in {\mathbb R} \cup \{\infty\}} \sigma\big(\mu^{(k,rq)}, \omega\big)$.
			\IF{$\sm = \sp$} \label{line3beg}
				\STATE $\widetilde{V}_k^{(rq)} \gets D\big(\mu^{(k,rq)},\mathrm{i}\omega^{(k,rq)}\big)^{-1}B\big(\mu^{(k,rq)}\big)$. 
				\STATE $\widetilde{W}_k^{(rq)} \gets D\big(\mu^{(k,rq)},\mathrm{i}\omega^{(k,rq)}\big)^{-\ast}C\big(\mu^{(k,rq)}\big)^\ast$.
			\ELSIF{$\sm < \sp$}
				\STATE $\widetilde{V}_k^{(rq)} \gets D\big(\mu^{(k,rq)},\mathrm{i}\omega^{(k,rq)}\big)^{-1}B\big(\mu^{(k,rq)}\big)$.
				\STATE $\widetilde{W}_k^{(rq)} \gets D\big(\mu^{(k,rq)},\mathrm{i}\omega^{(k,rq)}\big)^{-\ast}C\big(\mu^{(k,rq)}\big)^\ast H\big[\mu^{(k,rq)}\big]\big(\ri \omega^{(k,rq)}\big)$.
			\ELSE
				\STATE $\widetilde{V}_k^{(rq)} \gets D\big(\mu^{(k,rq)},\mathrm{i}\omega^{(k,rq)}\big)^{-1}B\big(\mu^{(k,rq)}\big)H\big[\mu^{(k,rq)}\big]\big(\ri \omega^{(k,rq)}\big)^\ast$. 
				\STATE $\widetilde{W}_k^{(rq)} \gets D\big(\mu^{(k,rq)},\mathrm{i}\omega^{(k,rq)}\big)^{-\ast}C\big(\mu^{(k,rq)}\big)^\ast$.
			\ENDIF \label{line3end}
		\STATE ${\mathcal V}_k \gets {\mathcal V}_{k}  \oplus {\rm Col}\big( \widetilde{V}_k^{(rq)} \big)$ and ${\mathcal W}_k \gets {\mathcal W}_{k}  \oplus {\rm Col}\big( \widetilde{W}_k^{(rq)} \big)$.
		\ENDFOR
	\ENDFOR
	\ENDIF \label{line:adjacent_expand_end}
\ENDFOR
\end{algorithmic}
\caption{The extended greedy algorithm for $\Hinf$-norm minimization}
\label{alg2}
\end{algorithm}

Before we start with the rate of convergence analysis, a few comments regarding the two algorithms are in order.
\begin{remark}
\begin{enumerate}
 \item[\bf (i)] The distinctions of cases in lines \ref{line1beg}--\ref{line1end} in Algorithm~\ref{alg1} and lines \ref{line2beg}--\ref{line2end} and \ref{line3beg}--\ref{line3end} are done such that the subspaces $\mathcal{V}_k$ and $\mathcal{W}_k$ have the same dimension. This is needed in order to obtain a \emph{regular} reduced matrix pencil $D^{V_k,W_k}\big(\mu^{(k)},s\big)$ and a well-defined reduced transfer function $H^{\mathcal{V}_k,\mathcal{W}_k}\big[\mu^{(k)}\big](s)$. In practice, a regularization procedure can be performed \cite{AntM07} to obtain a regular reduced matrix pencil, even if the above distinctions of cases are not carried out. In the above algorithms we make the silent assumption that the transfer functions $H^{\mathcal{V}_k,\mathcal{W}_k}\big[\mu^{(k)}\big](s)$ are well-defined and in $\cL_\infty^{\sp \times \sm}$ for all $k$. Note that the reduced dynamical systems associated with the transfer functions $H^{\mathcal{V}_k,\mathcal{W}_k}\big[\mu^{(k)}\big](s)$ are not necessarily asymptotically stable, so the transfer functions are not necessarily in $\cH_\infty^{\sp \times \sm}$. However, for the algorithm, the latter does not lead to any problem.
 \item[\bf (ii)] In this paper, we only consider parameter-dependent linear time-invariant systems. Efficient algorithms for the computation of the $\cL_\infty$-norm however, have also been recently considered for transfer functions of a more general class of systems \cite{Aliyev2017, SchV18}. The results presented here can be transferred to this more general situation without any changes in the algorithm description.
\end{enumerate}
\end{remark}

\section{Rate of Convergence Analysis}\label{sec:ROC}
In this section, we perform a rate of convergence analysis for Algorithms~\ref{alg1} and~\ref{alg2}.
We view $\mu^{(k+1)}, \mu^{(k)}, \mu^{(k-1)}$ as functions of $\mu^{(1)}$. 
Letting $\mu_\ast$ be a local or a global minimizer of $\| H[\cdot] \|_{{\mathcal H}_\infty}$,
we assume $\mu^{(k+1)},\, \mu^{(k)},\, \mu^{(k-1)} \rightarrow \mu_\ast$ as $\mu^{(1)} \rightarrow \mu_\ast$. 
Our main result is a superlinear convergence result, i.\,e., for all $k\ge2$ there exists a constant $C$, independent of $\mu^{(1)}$, such that
\[
	\big\| \mu^{(k+1)} - \mu_\ast \big\|_2   \leq  C \left(  \big\| \mu^{(k)} - \mu_\ast \big\|_2  \cdot
				\max \big\{ \big\| \mu^{(k)} - \mu_\ast \big\|_2, \big\| \mu^{(k-1)} - \mu_\ast \big\|_2 \big\} \right)
\]
for all $\mu^{(1)}$ sufficiently close to $\mu_\ast$.

The analysis here addresses the smooth setting, that is, throughout this section we
assume the following:
\begin{assumption}[Smoothness]\label{assume:smoothness}
\textbf{(i)} The 
supremum of $\sigma(\mu_\ast, \cdot)$ is attained uniquely, say at $\omega_\ast$,
and \textbf{(ii)} $\sigma(\mu_\ast, \omega_\ast) > 0$ is a simple singular value of 
$H[\mu_\ast](\ri \omega_*)$.
\end{assumption}

Many of the results in this section are established uniformly over every $\mu^{(1)}$ that is
sufficiently close to $\mu_\ast$.
The dependence of the reduced transfer function $H^{{\mathcal V}_k, {\mathcal W}_k}[\mu](s)$,
as well as the reduced singular value functions 
$\sigma^{{\mathcal V}_k,\, {\mathcal W}_k}(\cdot,\cdot), \sigma^{{\mathcal V}_k, {\mathcal W}_k}_2(\cdot,\cdot)$
on $\mu^{(1)}$ is implicitly given through the subspaces ${\mathcal V}_k,\, {\mathcal W}_k$ or equivalently,
the matrices $V_k,\, W_k$ whose columns form orthonormal bases for these subspaces.
We start with uniform Lipschitz continuity results for these functions with respect to $\mu^{(1)}$.
Note that in this result and in the subsequent discussions, $\sigma_{\min}(\cdot)$ denotes the smallest singular value
of its matrix argument, whereas 
\[
  \overline{{\mathcal B}}\big(\widetilde{\mu},\eta\big) := \big\{ \mu \in {\mathbb R}^\sd \; \big| \; \big\| \mu - \widetilde{\mu} \big\|_2 \leq \eta \big\} 
     \quad \text{and} \quad 
  \overline{{\mathcal B}}\big(\widetilde{\omega},\eta\big) := \big\{ \omega \in {\mathbb R} \; \big| \; \big|\omega - \widetilde{\omega} \big| \leq \eta \big\}
\] 
for given $\widetilde{\mu} \in {\mathbb R}^\sd$, $\widetilde{\omega} \in {\mathbb R}$, and $\eta > 0$.
\begin{lemma}[Uniform Lipschitz continuity]\label{lemma:Lipschitz_cont}
Suppose that, for some $\beta > 0$, the point $\mu^{(1)}$ is such that $\sigma_{\min}\big( D^{V_k, W_k}(\mu_\ast, {\rm i} \omega_\ast) \big) \geq \beta$.  
Then there exist constants $\eta_\mu$, $\eta_\omega$, $\gamma$ independent of $\mu^{(1)}$ such that \vspace*{1ex}
\begin{enumerate}
     \Item[\bf (i)] \begin{multline*}\left\| H^{{\mathcal V}_k, {\mathcal W}_k}\big[\widetilde{\mu}\big](\ri\omega) - H^{{\mathcal V}_k, {\mathcal W}_k}[\mu](\ri\omega) \right\|_2  \leq  \gamma \big\| \widetilde{\mu} - \mu \big\|_2
     \\ \forall\, \widetilde{\mu}, \,\mu \in \overline{{\mathcal B}}(\mu_\ast,\eta_\mu), \,\forall\, \omega \in \overline{{\mathcal B}}(\omega_\ast,\eta_\omega);\end{multline*} 
     \Item[\bf (ii)] \begin{multline*}\big\| H^{{\mathcal V}_k, {\mathcal W}_k}[\mu]\big(\ri\widetilde{\omega}\big) - H^{{\mathcal V}_k, {\mathcal W}_k}[\mu](\ri\omega) \big\|_2  \leq  \gamma \big| \widetilde{\omega} - \omega \big|
     \\ \forall\, \mu \in \overline{{\mathcal B}}(\mu_\ast,\eta_\mu),\, \forall\, \widetilde{\omega},\, \omega \in \overline{{\mathcal B}}(\omega_\ast,\eta_\omega);\end{multline*}
     \Item[\bf (iii)] \begin{multline*} \left| \sigma^{{\mathcal V}_k, {\mathcal W}_k} \big(\widetilde{\mu}, \omega\big) - \sigma^{{\mathcal V}_k, {\mathcal W}_k} (\mu, \omega) \right| \leq  \gamma \big\| \widetilde{\mu} - \mu \big\|_2,\\  \big| \sigma^{{\mathcal V}_k, {\mathcal W}_k}_2 \big(\widetilde{\mu}, \omega\big) - \sigma^{{\mathcal V}_k, {\mathcal W}_k}_2 (\mu, \omega) \big| \leq  \gamma \big\| \widetilde{\mu} - \mu \big\|_2 \\ \forall\, \widetilde{\mu},\, \mu \in \overline{{\mathcal B}}(\mu_\ast,\eta_\mu),\, \forall\, \omega \in \overline{{\mathcal B}}(\omega_\ast,\eta_\omega), \end{multline*}
      \Item[\bf (iv)] \begin{multline*} \big| \sigma^{{\mathcal V}_k, {\mathcal W}_k} \big(\mu, \widetilde{\omega}\big) - \sigma^{{\mathcal V}_k, {\mathcal W}_k} (\mu, \omega) \big| \leq  \gamma \big| \widetilde{\omega} - \omega \big|, \\
      \big| \sigma^{{\mathcal V}_k, {\mathcal W}_k}_2 \big(\mu, \widetilde{\omega}\big) - \sigma^{{\mathcal V}_k, {\mathcal W}_k}_2 (\mu, \omega) \big| \leq  \gamma \big| \widetilde{\omega} - \omega \big|
      \\ \forall\, \mu \in \overline{{\mathcal B}}(\mu_\ast,\eta_\mu),\, \forall\, \widetilde{\omega},\, \omega \in \overline{{\mathcal B}}(\omega_\ast,\eta_\omega).\end{multline*}
\end{enumerate}
\end{lemma}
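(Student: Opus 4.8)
The plan is to reduce all four claims to a single, uniform-in-$\mu^{(1)}$ bound on the reduced resolvent $D^{V_k,W_k}(\mu,\ri\omega)^{-1}$ near $(\mu_\ast,\omega_\ast)$, and then propagate Lipschitz continuity through the explicit formula for $H^{\mathcal{V}_k,\mathcal{W}_k}[\mu](\ri\omega)$ and through the $1$-Lipschitz property of singular values. First I would observe that the map $(\mu,\omega)\mapsto D^{V_k,W_k}(\mu,\ri\omega) = \sum_j \ri\omega\,f_j(\mu)W_k^\ast E_jV_k - \sum_j g_j(\mu) W_k^\ast A_jV_k$ is real-analytic in $(\mu,\omega)$, with derivatives bounded by constants depending only on the $\|E_j\|,\|A_j\|$, the sup-norms of $f_j,g_j$ and their derivatives on a fixed compact neighborhood of $(\mu_\ast,\omega_\ast)$, and $\|V_k\|=\|W_k\|=1$; crucially these bounds are \emph{independent of $k$ and of $\mu^{(1)}$}. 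Combined with the hypothesis $\sigma_{\min}\big(D^{V_k,W_k}(\mu_\ast,\ri\omega_\ast)\big)\ge\beta$, a standard perturbation argument (Neumann series on $D(\mu_\ast,\ri\omega_\ast)^{-1}D(\mu,\ri\omega)$) gives radii $\eta_\mu,\eta_\omega>0$ and a constant $M$, all independent of $\mu^{(1)}$, such that $\big\|D^{V_k,W_k}(\mu,\ri\omega)^{-1}\big\|_2\le M$ for all $(\mu,\omega)\in\overline{\mathcal{B}}(\mu_\ast,\eta_\mu)\times\overline{\mathcal{B}}(\omega_\ast,\eta_\omega)$. This is the technical heart, and also where I expect the main obstacle: one must be careful that $\eta_\mu,\eta_\omega,M$ genuinely do not degrade as $k\to\infty$, which works precisely because the Lipschitz constant of $D^{V_k,W_k}$ is controlled independently of $k$ by the compression inequality $\|W_k^\ast(\cdot)V_k\|\le\|\cdot\|$.

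Given the uniform resolvent bound, (i) follows by writing
\begin{align*}
 H^{\mathcal{V}_k,\mathcal{W}_k}[\widetilde\mu](\ri\omega) - H^{\mathcal{V}_k,\mathcal{W}_k}[\mu](\ri\omega)
 &= C^{V_k}(\widetilde\mu)D^{V_k,W_k}(\widetilde\mu,\ri\omega)^{-1}\big(B^{W_k}(\widetilde\mu)-B^{W_k}(\mu)\big)\\
 &\quad + \big(C^{V_k}(\widetilde\mu)-C^{V_k}(\mu)\big)D^{V_k,W_k}(\widetilde\mu,\ri\omega)^{-1}B^{W_k}(\mu)\\
 &\quad + C^{V_k}(\mu)\big(D^{V_k,W_k}(\widetilde\mu,\ri\omega)^{-1}-D^{V_k,W_k}(\mu,\ri\omega)^{-1}\big)B^{W_k}(\mu),
\end{align*}
using $X^{-1}-Y^{-1}=X^{-1}(Y-X)Y^{-1}$ on the last term together with the Lipschitz bound on $\mu\mapsto D^{V_k,W_k}(\mu,\ri\omega)$, and the trivial Lipschitz bounds on $B^{W_k}(\cdot)$, $C^{V_k}(\cdot)$ (again $k$- and $\mu^{(1)}$-independent, from $\|W_k\|,\|V_k\|\le1$). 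Collecting constants yields $\gamma$. Claim (ii) is identical with $\widetilde\mu=\mu$ fixed and the variation now in $\omega$; here the only $\omega$-dependence in $H^{\mathcal{V}_k,\mathcal{W}_k}$ is through $D^{V_k,W_k}(\mu,\ri\omega)$, whose dependence on $\omega$ is affine, so the same resolvent-difference identity applies.

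For (iii) and (iv), I would invoke the fact that the largest and second-largest singular values are $1$-Lipschitz functions of the matrix (in the spectral norm) — a consequence of Weyl's inequality $|\sigma_i(X)-\sigma_i(Y)|\le\|X-Y\|_2$. Hence
\[
 \big|\sigma^{\mathcal{V}_k,\mathcal{W}_k}(\widetilde\mu,\omega)-\sigma^{\mathcal{V}_k,\mathcal{W}_k}(\mu,\omega)\big|
 \le \big\|H^{\mathcal{V}_k,\mathcal{W}_k}[\widetilde\mu](\ri\omega)-H^{\mathcal{V}_k,\mathcal{W}_k}[\mu](\ri\omega)\big\|_2
 \le \gamma\big\|\widetilde\mu-\mu\big\|_2
\]
by (i), and the same chain with (ii) gives the $\omega$-variant; the bounds on $\sigma^{\mathcal{V}_k,\mathcal{W}_k}_2$ are obtained verbatim since Weyl's inequality applies to every singular value index. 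Shrinking $\eta_\mu,\eta_\omega$ once more if needed so that all four statements hold with a common $\gamma$ (take the maximum of the finitely many constants produced) completes the argument. The only genuine subtlety throughout is the uniformity in $k$ and $\mu^{(1)}$, and that is entirely inherited from the uniform resolvent bound established in the first step.
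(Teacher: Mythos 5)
Your proposal is correct, and its skeleton matches the paper's proof exactly: the technical heart in both cases is the uniform bound $\sigma_{\min}\big(D^{V_k,W_k}(\mu,\ri\omega)\big)\ge\beta/2$ on a product of balls whose radii do not depend on $\mu^{(1)}$, obtained from the hypothesis at $(\mu_\ast,\omega_\ast)$ plus the compression inequality $\big\|W_k^\ast\big(D(\mu,\ri\omega)-D(\mu_\ast,\ri\omega_\ast)\big)V_k\big\|_2\le\big\|D(\mu,\ri\omega)-D(\mu_\ast,\ri\omega_\ast)\big\|_2$ (the paper phrases this perturbation step via Weyl's theorem for singular values rather than a Neumann series, a purely cosmetic difference), and parts \textbf{(iii)}--\textbf{(iv)} are in both cases immediate from Weyl's inequality applied to the bounds of parts \textbf{(i)}--\textbf{(ii)}. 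Where you genuinely deviate is in how the Lipschitz bound for $H^{\mathcal{V}_k,\mathcal{W}_k}$ itself is propagated: the paper differentiates $(\mu,\omega)\mapsto H^{\mathcal{V}_k,\mathcal{W}_k}[\mu](\ri\omega)$ by the product and chain rules, bounds each partial derivative uniformly by constants $M_j$ built from $\beta$ and the norms of $B(\cdot)$, $C(\cdot)$, $\partial D/\partial\mu_j$, and then applies the mean value theorem entrywise, which produces the Lipschitz constant $\sqrt{\sp\sm}\,\sd\,M$; you instead telescope the difference into three terms and use the second resolvent identity $X^{-1}-Y^{-1}=X^{-1}(Y-X)Y^{-1}$ together with the ($k$-independent) Lipschitz bounds on $B^{W_k}(\cdot)$, $C^{V_k}(\cdot)$, $D^{V_k,W_k}(\cdot,\ri\omega)$. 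Your route is slightly more elementary and direct: it avoids the entrywise gradient argument and the dimensional factor $\sqrt{\sp\sm}\,\sd$, and it only needs Lipschitz continuity of the coefficient functions on the compact balls rather than bounds on their derivatives; the paper's derivative-based computation, on the other hand, sets up exactly the kind of explicit formula (their equation for $\partial H^{\mathcal{V}_k,\mathcal{W}_k}/\partial\mu_j$) that is in the spirit of the later uniform derivative bounds (Lemma~\ref{thm:bound_hd}). The uniformity in $k$ and $\mu^{(1)}$ is handled identically in both arguments, so there is no gap.
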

\begin{proof} 
	
	By Weyl's theorem \cite[Theorem~4.3.1]{Horn1985}, for every $\mu \in \underline{\Omega}$ and $\omega \in {\mathbb R}$
	we have
	\begin{equation*}
	\begin{split}
	      \big| \sigma_{\min}\big(D^{V_k,W_k}(\mu,\omega)\big)  -  \sigma_{\min}\big(D^{V_k,W_k}(\mu_\ast,\omega_\ast)\big)  \big|
	           & \leq
	       \big\|     D^{V_k,W_k}(\mu,\omega)   -   D^{V_k,W_k}(\mu_\ast,\omega_\ast)  \big\|_2 \\
	             & = 
	        {\| W_k^\ast   ( D(\mu,\omega)  -  D(\mu_\ast, \omega_\ast) ) V_k  \|}_2 \\
	            & \leq
	        {\| D(\mu,\omega)  -  D(\mu_\ast, \omega_\ast) \|}_2 \\
	              & \leq
	       \nu( {\| \mu - \mu_\ast \|}_2 +  | \omega - \omega_\ast |) 
	\end{split}       
	\end{equation*}
	for some $\nu > 0$,
	where the last inequality is due to the fact that $D(\cdot,\cdot)$ is continuously differentiable in a neighborhood of 
	$\underline{\Omega} \times {\mathbb R}$. This uniform Lipschitz continuity property of 
	$\sigma_{\min}\big(D^{V_k,W_k}(\cdot,\cdot)\big)$, 
	combined with $\sigma_{\min}\big( D^{V_k, W_k}(\mu_\ast, {\rm i} \omega_\ast) \big) \geq \beta$,
	implies the existence of $\eta_\mu,\, \eta_\omega$ 
	independent of $\mu^{(1)}$ such that
	\[
		\sigma_{\min}\big(D^{V_k,W_k} (\mu,\omega)\big)   \geq \beta/2
		\quad \forall\, \mu \in \overline{{\mathcal B}}(\mu_\ast,\eta_\mu),\, \forall\, \omega \in \overline{{\mathcal B}}(\omega_\ast,\eta_\omega).
	\] 
	It follows that $(\mu, \omega) \mapsto H^{{\mathcal V}_k, {\mathcal W}_k}[\mu]({\rm i} \omega)$ is 
	differentiable $\forall\, \mu \in \overline{{\mathcal B}}(\mu_\ast,\eta_\mu),\, \forall\, \omega \in \overline{{\mathcal B}}(\omega_\ast,\eta_\omega)$.
\begin{enumerate}
	\item[\bf (i)] For every $\mu \in \overline{{\mathcal B}}(\mu_\ast,\eta_\mu)$ and $\omega \in \overline{{\mathcal B}}(\omega_\ast,\eta_\omega)$,
	by the product and chain rule we obtain
	\begin{equation}\label{eq:red_transfer_der}
		\begin{split}
			\frac{ \partial  H^{{\mathcal V}_k,{\mathcal W}_k}[\mu]({\rm i} \omega)  }{ \partial \mu_j }   = 
				\frac{\partial C^{V_k}(\mu)}{\partial \mu_j} D^{V_k, W_k}(\mu, {\rm i} \omega)^{-1} B^{W_k}(\mu)
						+  \hskip 22ex \\
				C^{V_k}(\mu) 
				D^{V_k, W_k}(\mu, {\rm i} \omega)^{-1} \frac{\partial D^{V_k, W_k}(\mu, {\rm i} \omega)}{ \partial \mu_j } 
				D^{V_k, W_k}(\mu, {\rm i} \omega)^{-1} B^{W_k}(\mu) +  \hskip 4ex \\
				C^{V_k}(\mu) D^{V_k, W_k}(\mu, {\rm i} \omega)^{-1} \frac{ \partial B^{W_k}(\mu) }{ \partial \mu_j }
		\end{split}
	\end{equation}
	for $j = 1,\, \dots,\, \sd$. Setting 
	\begin{equation*}
		\begin{split}
	 	M'_{D,j} & := \max \left\{ \left\| \frac{ \partial D(\mu, {\rm i} \omega) }{ \partial \mu_j } \right\|_2 \; \bigg| \; 
				\mu \in \overline{{\mathcal B}}(\mu_\ast,\eta_\mu), \omega \in \overline{{\mathcal B}}(\omega_\ast,\eta_\omega) \right\}	\\
		M'_{C,j} & := \max \left\{ \left\| \frac{ \partial C(\mu) }{ \partial \mu_j } \right\|_2  \; \bigg|  \;
				\mu \in \overline{{\mathcal B}}(\mu_\ast,\eta_\mu) \right\},	\: 
		M_{C} := \max \left\{ {\| C(\mu) \|}_2 \:  | \: 
				\mu \in \overline{{\mathcal B}}(\mu_\ast,\eta_\mu) \right\}, \\
		M'_{B,j}  & := \max \left\{ \left\| \frac{ \partial B(\mu) }{ \partial \mu_j } \right\|_2 \; \bigg| \;
				\mu \in \overline{{\mathcal B}}(\mu_\ast,\eta_\mu) \right\}, \:
		M_{B}  := \max \left\{ {\| B(\mu) \|}_2 \:  | \: 
				\mu \in \overline{{\mathcal B}}(\mu_\ast,\eta_\mu) \right\},
		\end{split}
	\end{equation*}
	and exploiting 
	\begin{multline*}
\left\| \frac{\partial D(\mu, {\rm i} \omega)}{\partial \mu_j} \right\|_2 \geq \left\| \frac{\partial D^{V_k, W_k}(\mu, {\rm i} \omega)}{ \partial \mu_j } \right\|_2, \; {\| C(\mu) \|}_2 \geq \big\| C^{V_k}(\mu) \big\|_2, \\ {\| B(\mu) \|}_2 \geq \big\| B^{W_k}(\mu) \big\|_2, \; 
\left\| \frac{ \partial C(\mu) }{ \partial \mu_j } \right\|_2 \geq \left\| \frac{ \partial C^{V_k}(\mu) }{ \partial \mu_j } \right\|_2, \;
\left\| \frac{ \partial B(\mu) }{ \partial \mu_j } \right\|_2 \geq \left\| \frac{ \partial B^{W_k}(\mu) }{ \partial \mu_j } \right\|_2,
	\end{multline*}
	as well as $\sigma_{\min}\big(D^{V_k,W_k} (\mu,\omega)\big) \geq \beta/2$, we deduce from \eqref{eq:red_transfer_der} that
	\[
			\left\|
			\frac{ \partial  H^{{\mathcal V}_k,{\mathcal W}_k}[\mu]({\rm i} \omega)  }{ \partial \mu_j }
			\right\|_2
					\leq
			2 \frac{M'_{C,j} M_{B}}{\beta} + 4 \frac{M_{C} M'_{D,j} M_{B}}{\beta^2}  +  2 \frac{M_{C} M'_{B,j}}{\beta} =: M_j
	\]
	for all $\mu \in \overline{{\mathcal B}}(\mu_\ast,\eta_\mu)$, $\omega \in \overline{{\mathcal B}}(\omega_\ast,\eta_\omega)$
	and $j = 1,\, \dots,\, \sd$, where $M_j$ does not depend on $\mu^{(1)}$. This in particular implies
	$\big| \big[ \partial  H^{{\mathcal V}_k,{\mathcal W}_k}[\mu]({\rm i} \omega) / \partial \mu_j \big]_{k\ell} \big| \leq M_j$
	for $k = 1,\, \dots,\, \sp$,\, $\ell = 1,\, \dots,\, \sm$. With $M := \max \{ M_j \; | \; j = 1,\, \dots,\, \sd \}$, 
	for every $\widetilde{\mu}, \mu \in \overline{{\mathcal B}}(\mu_\ast,\eta_\mu), \omega \in \overline{{\mathcal B}}(\omega_\ast,\eta_\omega)$,
	by the mean value theorem we obtain
	\begin{equation*}
	\begin{split}
	\big| \big[ H^{{\mathcal V}_k,{\mathcal W}_k}\big[\widetilde{\mu}\big]({\rm i} \omega) \big]_{k\ell}	-   
				\big[ H^{{\mathcal V}_k,{\mathcal W}_k}[\mu]({\rm i} \omega) \big]_{k\ell} \big|
			& \leq  
			\big|  \nabla_\mu [ H^{{\mathcal V}_k,{\mathcal W}_k}[\widehat{\mu}]({\rm i} \omega) ]_{k\ell}^\mathsf{T} \big(\widetilde{\mu} - \mu\big) \big|  \\
			& \leq \sum_{j=1}^{\sd} M_j \big| \widetilde{\mu}_j - \mu_j \big|	\leq	\sd M \big\| \widetilde{\mu} - \mu \big\|_2
	\end{split}
	\end{equation*}
	for some $\widehat{\mu} \in \overline{{\mathcal B}}(\mu_\ast,\eta_\mu)$, where 
	$\nabla_\mu \big[ H^{{\mathcal V}_k,{\mathcal W}_k}\big[\widehat{\mu}\big]\big({\rm i} \omega\big) \big]_{k\ell}$ denotes the gradient
	of $\mu \mapsto \big[ H^{{\mathcal V}_k,{\mathcal W}_k}[\mu]({\rm i} \omega) \big]_{k\ell}$ at $\widehat{\mu}$.
	It follows that
	\[
	       \big\| H^{{\mathcal V}_k,{\mathcal W}_k}\big[\widetilde{\mu}\big]({\rm i} \omega)	-  
			H^{{\mathcal V}_k,{\mathcal W}_k}[\mu]({\rm i} \omega) \big\|_2
			\leq
		\sqrt{\sp \sm} \sd M \big\| \widetilde{\mu} - \mu \big\|_2,	
	\]
	where the Lipschitz constant $\sqrt{\sp \sm} \sd M$ is independent of $\mu^{(1)}$ as desired.
	\item[\bf (ii)] A similar proof as in part \textbf{(i)} applies but now by differentiating the function
	$(\mu,\omega) \mapsto H^{\mathcal{V}_k, \mathcal{W}_k}[\mu]({\rm i} \omega)$ with respect to $\omega$ instead of $\mu_j$.  
	\item[\bf (iii)] By Weyl's theorem \cite[Theorem~4.3.1]{Horn1985} and part \textbf{(i)} we have
	\begin{equation*}
	\begin{split}
		\big| \sigma^{{\mathcal V}_k, {\mathcal W}_k} \big(\widetilde{\mu}, \omega\big) - \sigma^{{\mathcal V}_k, {\mathcal W}_k} (\mu, \omega) \big| 
		&	\leq
		\big\| H^{{\mathcal V}_k, {\mathcal W}_k}\big[\widetilde{\mu}\big](\ri\omega) - H^{{\mathcal V}_k, {\mathcal W}_k}[\mu](\ri\omega) \big\|_2
		\leq
		\gamma \big\| \widetilde{\mu} - \mu \big\|_2,	\\
		\big| \sigma_2^{{\mathcal V}_k, {\mathcal W}_k} \big(\widetilde{\mu}, \omega\big) - \sigma_2^{{\mathcal V}_k, {\mathcal W}_k} (\mu, \omega) \big| 
		&	\leq
		\big\| H^{{\mathcal V}_k, {\mathcal W}_k}\big[\widetilde{\mu}\big](\ri\omega) - H^{{\mathcal V}_k, {\mathcal W}_k}[\mu](\ri\omega) \big\|_2
		\leq
		\gamma \big\| \widetilde{\mu} - \mu \big\|_2
	\end{split}
	\end{equation*}
	for all $\widetilde{\mu},\, \mu \in \overline{{\mathcal B}}(\mu_\ast,\eta_\mu)$ and $\omega \in \overline{{\mathcal B}}(\omega_\ast,\eta_\omega)$,
	hence we get the result.
	\item[\bf (iv)] Weyl's theorem and part \textbf{(ii)} combined imply
	\begin{equation*}
	\begin{split}
		\big| \sigma^{{\mathcal V}_k, {\mathcal W}_k} \big(\mu, \widetilde{\omega}\big) - \sigma^{{\mathcal V}_k, {\mathcal W}_k} (\mu, \omega) \big| 
		&	\leq
		\big\| H^{{\mathcal V}_k, {\mathcal W}_k}[\mu]\big(\ri\widetilde{\omega}\big) - H^{{\mathcal V}_k, {\mathcal W}_k}[\mu](\ri\omega) \big\|_2
		\leq
		\gamma \big| \widetilde{\omega} - \omega \big|,	\\
		\big| \sigma_2^{{\mathcal V}_k, {\mathcal W}_k} \big(\mu, \widetilde{\omega}\big) - \sigma_2^{{\mathcal V}_k, {\mathcal W}_k} (\mu, \omega) \big| 
		&	\leq
		\big\| H^{{\mathcal V}_k, {\mathcal W}_k}[\mu]\big(\ri\widetilde{\omega}\big) - H^{{\mathcal V}_k, {\mathcal W}_k}[\mu](\ri\omega) \big\|_2
		\leq
		\gamma \big| \widetilde{\omega} - \omega \big|
	\end{split}
	\end{equation*}
	for all $\mu \in \overline{{\mathcal B}}(\mu_\ast,\eta_\mu)$ and $\widetilde{\omega},\,\omega \in \overline{{\mathcal B}}(\omega_\ast,\eta_\omega)$ as claimed.
\end{enumerate}
\end{proof}

The lemma below asserts uniform upper bounds on the derivatives of
the largest singular value function for the reduced problem provided $D^{V_k, W_k}(\mu_\ast,\omega_\ast)$ 
is away from singularity. Its proof is inspired by \cite[Proposition~2.9]{Kangal2015}, and given
in the appendix.
\begin{lemma}\label{thm:bound_hd}
Suppose that Assumption \ref{assume:smoothness} holds and that ${\| \mu^{(1)} - \mu_\ast \|}_2$ is small enough.
Additionally, assume that for some $\beta > 0$, the point $\mu^{(1)}$ is such that
\[
        \sigma_{\min} (D(\mu_*,\ri \omega_\ast)) \geq \beta
         \quad  \text{and}  \quad
	\sigma_{\min} \big(D^{V_k, W_k}\big(\mu_\ast, \ri\omega_\ast \big) \big) \geq \beta.
\] 
Then there exist a $U$ and constants $\eta_\mu,\, \eta_\omega > 0$ independent of $\mu^{(1)}$ such that
\begin{multline*}
         \big| \sigma^{{\mathcal V}_k, {\mathcal W}_k}_{\chi_1} (\mu, \omega) \big| \leq U,  \quad
         \big| \sigma^{{\mathcal V}_k, {\mathcal W}_k}_{\chi_1 \chi_2} (\mu, \omega) \big| \leq U,  \quad
         \big| \sigma^{{\mathcal V}_k, {\mathcal W}_k}_{\chi_1 \chi_2 \chi_3} (\mu, \omega) \big| \leq U
         \\ \forall \, \mu \in \overline{{\mathcal B}}(\mu_\ast,\eta_\mu), \,\forall\, \omega \in \overline{{\mathcal B}}(\omega_\ast,\eta_\omega)
\end{multline*}
for all $\chi_1,\, \chi_2,\, \chi_3 \in \{\omega\} \cup \{ \mu_j \; | \; j = 1,\, \dots,\, \sd \}$.
\end{lemma}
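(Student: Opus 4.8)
The plan is to exhibit a ball $\overline{\mathcal B}(\mu_\ast,\eta_\mu)\times\overline{\mathcal B}(\omega_\ast,\eta_\omega)$, with radii not depending on $\mu^{(1)}$, on which (a) $H^{{\mathcal V}_k,{\mathcal W}_k}[\mu](\ri\omega)$ is well-defined and its partial derivatives up to order three (with respect to $\omega$ and the $\mu_j$) are uniformly bounded, and (b) its largest singular value $\sigma^{{\mathcal V}_k,{\mathcal W}_k}(\mu,\omega)$ is a \emph{simple} and \emph{positive} singular value with a spectral gap bounded below by a $\mu^{(1)}$-independent constant. Given (a) and (b), the assertion follows from the classical first-, second-, and third-order perturbation formulas for a simple eigenvalue of a Hermitian matrix family, as in \cite[Proposition~2.9]{Kangal2015}.

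\textbf{Step 1: uniform bounds on the derivatives of $H^{{\mathcal V}_k,{\mathcal W}_k}$.} This extends the computation already performed in the proof of Lemma~\ref{lemma:Lipschitz_cont}, where, under $\sigma_{\min}\big(D^{V_k,W_k}(\mu_\ast,\ri\omega_\ast)\big)\geq\beta$, one obtains $\mu^{(1)}$-independent radii $\eta_\mu,\eta_\omega$ with $\sigma_{\min}\big(D^{V_k,W_k}(\mu,\ri\omega)\big)\geq\beta/2$ and uniformly bounded first partial derivatives of $H^{{\mathcal V}_k,{\mathcal W}_k}[\mu](\ri\omega)$ on the ball. I would differentiate $H^{{\mathcal V}_k,{\mathcal W}_k}[\mu](\ri\omega)=C^{V_k}(\mu)\,D^{V_k,W_k}(\mu,\ri\omega)^{-1}\,B^{W_k}(\mu)$ a second and a third time via the product and chain rules together with the derivative-of-an-inverse identity. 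Every resulting term is a finite product of: a resolvent $D^{V_k,W_k}(\mu,\ri\omega)^{-1}$, of norm $\leq 2/\beta$; Galerkin-restricted derivatives of $D$, $C$, or $B$ of order $\leq 3$, each dominated in norm by the corresponding unrestricted derivative, hence by a constant over the compact ball (the $f_i,g_i,h_i,k_i$ being real-analytic, $D,C,B$ are smooth); and the factors $C^{V_k}(\mu)$, $B^{W_k}(\mu)$, likewise bounded. Taking maxima of these data quantities over the ball yields a $\mu^{(1)}$-independent constant $\widehat U$ with $\big\|\partial^{\alpha}H^{{\mathcal V}_k,{\mathcal W}_k}[\mu](\ri\omega)\big\|_2\leq\widehat U$ for every multi-index $\alpha$ in $(\omega,\mu_1,\dots,\mu_\sd)$ with $|\alpha|\leq 3$ and every point of the ball.

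\textbf{Step 2: uniform simplicity and spectral gap.} Put $\delta:=\sigma(\mu_\ast,\omega_\ast)-\sigma_2(\mu_\ast,\omega_\ast)>0$ and $\varepsilon:=\sigma(\mu_\ast,\omega_\ast)>0$ (positive by Assumption~\ref{assume:smoothness}); since $\sigma_{\min}\big(D(\mu_\ast,\ri\omega_\ast)\big)\geq\beta$, the full transfer function and hence $\sigma(\cdot,\cdot)$, $\sigma_2(\cdot,\cdot)$ are continuous near $(\mu_\ast,\omega_\ast)$, so after shrinking $\eta_\mu,\eta_\omega$ we may assume $\sigma-\sigma_2\geq\delta/2$ and $\sigma\geq\varepsilon/2$ on the ball. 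By the standing assumption $\mu^{(k)}\to\mu_\ast$ as $\mu^{(1)}\to\mu_\ast$, and $\omega^{(k)}=\arg\max_{\omega}\sigma(\mu^{(k)},\omega)\to\omega_\ast$ by the uniqueness part of Assumption~\ref{assume:smoothness}; thus for $\|\mu^{(1)}-\mu_\ast\|_2$ small enough $(\mu^{(k)},\omega^{(k)})$ lies in the ball. At this interpolation point, Lemma~\ref{thm:basic_int}\,\textbf{(i)}--\textbf{(ii)} give $\sigma^{{\mathcal V}_k,{\mathcal W}_k}(\mu^{(k)},\omega^{(k)})=\sigma(\mu^{(k)},\omega^{(k)})$ and $\sigma_2^{{\mathcal V}_k,{\mathcal W}_k}(\mu^{(k)},\omega^{(k)})=\sigma_2(\mu^{(k)},\omega^{(k)})$, so $\sigma^{{\mathcal V}_k,{\mathcal W}_k}-\sigma_2^{{\mathcal V}_k,{\mathcal W}_k}\geq\delta/2$ and $\sigma^{{\mathcal V}_k,{\mathcal W}_k}\geq\varepsilon/2$ there. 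The uniform Lipschitz estimates of Lemma~\ref{lemma:Lipschitz_cont}\,\textbf{(iii)}--\textbf{(iv)}, with $\mu^{(1)}$-independent constant $\gamma$, propagate these two inequalities, at levels $\delta/4$ and $\varepsilon/4$, to a ball of the fixed radius $\min\{\delta,\varepsilon\}/(8\gamma\sqrt{\sd})$ around $(\mu^{(k)},\omega^{(k)})$; as $(\mu^{(k)},\omega^{(k)})\to(\mu_\ast,\omega_\ast)$, one final $\mu^{(1)}$-independent reduction of $\eta_\mu,\eta_\omega$ secures $\sigma^{{\mathcal V}_k,{\mathcal W}_k}-\sigma_2^{{\mathcal V}_k,{\mathcal W}_k}\geq\delta/4$ and $\sigma^{{\mathcal V}_k,{\mathcal W}_k}\geq\varepsilon/4$ throughout $\overline{\mathcal B}(\mu_\ast,\eta_\mu)\times\overline{\mathcal B}(\omega_\ast,\eta_\omega)$; in particular $\sigma^{{\mathcal V}_k,{\mathcal W}_k}(\mu,\omega)$ is a simple, positive singular value of $H^{{\mathcal V}_k,{\mathcal W}_k}[\mu](\ri\omega)$ there.

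\textbf{Step 3: perturbation formulas, and the main obstacle.} On the ball from Step 2, $\sigma^{{\mathcal V}_k,{\mathcal W}_k}(\mu,\omega)$ is the largest eigenvalue of the Hermitian dilation of $H^{{\mathcal V}_k,{\mathcal W}_k}[\mu](\ri\omega)$ (zero diagonal blocks, $H^{{\mathcal V}_k,{\mathcal W}_k}[\mu](\ri\omega)$ and its conjugate transpose as off-diagonal blocks), a $C^3$ Hermitian matrix family whose partial derivatives up to order three are bounded by $\widehat U$ and whose largest eigenvalue is separated from the rest of the spectrum by at least $\min\{\delta/4,\varepsilon/4\}$ — here one uses that the eigenvalues of the dilation are $\pm$ the singular values of $H^{{\mathcal V}_k,{\mathcal W}_k}[\mu](\ri\omega)$ together with zeros. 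For such a family, the first, second, and third partial derivatives of the simple largest eigenvalue are fixed polynomial expressions in the partial derivatives of the family up to order three, the associated unit eigenvector, and the reduced resolvent of the shifted family (its pseudoinverse restricted to the orthogonal complement of the eigenvector), whose norm is at most $4/\min\{\delta,\varepsilon\}$; see \cite[Proposition~2.9]{Kangal2015}. Substituting the uniform bounds of Steps 1 and 2 into these expressions produces a single constant $U$, depending only on $\beta$, $\delta$, $\varepsilon$, $\gamma$, $\sd$, and norms of the data matrices and their derivatives — hence independent of $\mu^{(1)}$ — that dominates $\big|\sigma^{{\mathcal V}_k,{\mathcal W}_k}_{\chi_1}\big|$, $\big|\sigma^{{\mathcal V}_k,{\mathcal W}_k}_{\chi_1\chi_2}\big|$, and $\big|\sigma^{{\mathcal V}_k,{\mathcal W}_k}_{\chi_1\chi_2\chi_3}\big|$ throughout the ball. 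The crux is Step 2: making the lower bound on the \emph{reduced} spectral gap uniform in $\mu^{(1)}$. A priori the reduced largest and second singular values could coalesce near $(\mu_\ast,\omega_\ast)$ even though the full ones do not, and this is excluded only by coupling the exact interpolation of $\sigma$ and $\sigma_2$ at the iterate $(\mu^{(k)},\omega^{(k)})$ — which tends to $(\mu_\ast,\omega_\ast)$ — with the $\mu^{(1)}$-independent Lipschitz constants supplied by Lemma~\ref{lemma:Lipschitz_cont}.
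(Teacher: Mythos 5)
Your proposal is correct, and its first two steps coincide with the paper's own argument: the uniform bound $\sigma_{\min}\big(D^{V_k,W_k}(\mu,\ri\omega)\big)\geq\beta/2$ on a $\mu^{(1)}$-independent ball, and the uniform lower bound on the reduced gap $\sigma^{{\mathcal V}_k,{\mathcal W}_k}-\sigma_2^{{\mathcal V}_k,{\mathcal W}_k}$ obtained by coupling the exact interpolation of $\sigma$ and $\sigma_2$ at $\big(\mu^{(k)},\omega^{(k)}\big)\rightarrow(\mu_\ast,\omega_\ast)$ with the $\mu^{(1)}$-independent Lipschitz constants of Lemma~\ref{lemma:Lipschitz_cont}; you rightly flag this as the crux. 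Where you genuinely diverge is the last step. The paper never bounds second or third derivatives of $H^{{\mathcal V}_k,{\mathcal W}_k}$ and never invokes higher-order eigenvalue perturbation formulas: it analytically continues the Hermitian dilation $M^{{\mathcal V}_k,{\mathcal W}_k}[\mu](s)$ to complex $(\mu,s)$, uses the uniform gap plus a perturbation theorem from Stewart--Sun to keep the relevant eigenvalue simple (hence analytic) on a polydisc of $\mu^{(1)}$-independent radius, proves only the sup bound $\big\|\widehat{M}^{{\mathcal V}_k,{\mathcal W}_k}[\mu](s)\big\|_2\leq 2M_CM_B/\beta$, and then reads off all derivative bounds from Cauchy's integral formula. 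Your route stays entirely real: you extend the first-order computation of Lemma~\ref{lemma:Lipschitz_cont} to uniform bounds on $\partial^{\alpha}H^{{\mathcal V}_k,{\mathcal W}_k}$ for $|\alpha|\leq 3$, and then quote first-, second-, and third-order perturbation formulas for the simple top eigenvalue of the dilation, with the reduced resolvent controlled by the inverse gap; your accounting of that gap (at least $\min\{\delta,\varepsilon\}/4$, covering the cases with additional zero eigenvalues of the dilation) is correct. The trade-off: the paper's complex-analytic device needs only zeroth-order control of the matrix family and sidesteps third-order eigenvalue expansions, at the price of complexifying and verifying simplicity off the imaginary axis; your version is more elementary in its tools and avoids complexification, but the third-order perturbation formula carrying Step~3 is asserted rather than derived, so a complete write-up would have to supply it (e.g., by differentiating the eigenprojector via the reduced resolvent and bounding each term), which is tedious though standard. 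With that caveat, your argument closes the lemma.
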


The next result draws two important conclusions. First, the maximizers 
of $\sigma(\mu, \cdot)$ and $\sigma^{{\mathcal V}_k,{\mathcal W}_k}(\mu, \cdot)$ can be expressed 
as smooth functions of $\mu$ in a neighborhood of $\mu_\ast$. Second, $\left\| H[\cdot] \right\|_{{\mathcal H}_\infty}$, as 
well as its reduced counter-parts generated by the algorithms are smooth locally around $\mu_\ast$.
\begin{proposition}\label{thm:localrep_smoothness}
Suppose that Assumption \ref{assume:smoothness} holds and that $\| \mu^{(1)} - \mu_\ast \|_2$ is small enough. 
Furthermore, assume for some $\delta < 0$ and $\beta > 0$ that the point $\mu^{(1)}$ is such that 
\begin{equation}\label{eq:secder_bounds}
       \sigma_{\omega \omega}(\mu_\ast,\omega_\ast) \leq \delta
       \quad \text{and}   \quad
	\sigma^{{\mathcal V}_k, {\mathcal W}_k}_{\omega \omega}(\mu_\ast,\omega_\ast)  \leq \delta, 
\end{equation}
as well as
\begin{equation*}
         \sigma_{\min} (D(\mu_*,\ri \omega_\ast)) \geq \beta
         \quad  \text{and}  \quad
	\sigma_{\min} \big(D^{V_k, W_k}\big(\mu_\ast, \ri\omega_\ast \big) \big) \geq \beta.
\end{equation*}
Then for some $\eta_{\mu,0},\, \eta_{\omega,0},\, \varepsilon > 0$ independent of $\mu^{(1)}$, 
the following assertions hold:
\begin{enumerate}
	\item[\bf (i)] There exists a unique continuous function 
	$\boldsymbol{\omega} : \overline{{\mathcal B}}(\mu_\ast,\eta_{\mu,0}) \rightarrow \overline{{\mathcal B}}(\omega_\ast.\eta_{\omega,0})$
	that is three times continuously differentiable in the interior of $\overline{{\mathcal B}}(\mu_\ast,\eta_{\mu,0})$ such that 
	\[
		\boldsymbol{\omega}(\mu_\ast) = \omega_\ast \quad \text{and}	\quad
				\sigma_\omega(\mu,\boldsymbol{\omega}(\mu))=0		
									 \quad	\forall \mu \in \overline{{\mathcal B}}(\mu_\ast, \eta_{\mu,0}).
	\]
	Furthermore,
	$\sigma_{\omega \omega}(\mu,\boldsymbol{\omega}(\mu)) \leq \delta/2$ for all $\mu \in \overline{{\mathcal B}}(\mu_\ast, \eta_{\mu,0})$.
	\item[\bf (ii)] There exists a unique continuous function
	$\boldsymbol{\omega}^{{\mathcal V}_k, {\mathcal W}_k} :$ $\overline{{\mathcal B}}(\mu_\ast,\eta_{\mu,0}) \rightarrow \overline{{\mathcal B}}(\omega_\ast,\eta_{\omega,0})$ that is three times continuously differentiable in the interior of $\overline{{\mathcal B}}(\mu_\ast,\eta_{\mu,0})$ such that 
	\[
		\boldsymbol{\omega}^{{\mathcal V}_k, {\mathcal W}_k}\big(\mu^{(k)}\big) = \omega^{(k)}
			\quad	\text{and}	\quad
		\sigma_\omega^{{\mathcal V}_k, {\mathcal W}_k}\big(\mu,\boldsymbol{\omega}^{{\mathcal V}_k,{\mathcal W}_k}(\mu)\big) = 0
		\quad 	\forall \mu \in \overline{{\mathcal B}}(\mu_\ast, \eta_{\mu,0}).
	\]
	Additionally, 
 $\sigma_{\omega \omega}^{{\mathcal V}_k, {\mathcal W}_k}\big(\mu,\boldsymbol{\omega}^{{\mathcal V}_k,{\mathcal W}_k}(\mu)\big) \leq \delta/2$
 	for all $\mu \in \overline{{\mathcal B}}(\mu_\ast, \eta_{\mu,0})$.
	\item[\bf (iii)] We have 
	\[
	      \sigma(\mu, \boldsymbol{\omega}(\mu)) - \sigma_2(\mu,\boldsymbol{\omega}(\mu))  \geq \varepsilon,
	\]
	and the unique global maximizer of $\sigma(\mu, \cdot)$ is given by $\boldsymbol{\omega}(\mu)$. In particular, for all $\mu \in \overline{{\mathcal B}}(\mu_\ast, \eta_{\mu,0})$ it holds that
	\[
			 \sigma(\mu,\boldsymbol{\omega}(\mu))	=	{\| H[\mu] \|}_{{\mathcal H}_\infty}.
	\]
	\item[\bf (iv)] We have
	\[
		\sigma^{{\mathcal V}_k, {\mathcal W}_k}\big(\mu, \boldsymbol{\omega}^{{\mathcal V}_k, {\mathcal W}_k}(\mu)\big) -  
				\sigma_2^{{\mathcal V}_k, {\mathcal W}_k}\big(\mu,\boldsymbol{\omega}^{{\mathcal V}_k,{\mathcal W}_k}(\mu)\big)
		\geq \varepsilon,
	\]
	and the unique global maximizer and stationary point of $\sigma^{{\mathcal V}_k, {\mathcal W}_k}(\mu, \cdot)$ in 
	$\overline{{\mathcal B}}(\omega_\ast,\eta_{\omega,0})$ is $\boldsymbol{\omega}^{{\mathcal V}_k, {\mathcal W}_k}(\mu)$ for all 
	$\mu \in \overline{{\mathcal B}}(\mu_\ast, \eta_{\mu,0})$.
\end{enumerate}
\end{proposition}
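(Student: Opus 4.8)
The plan is to obtain parts \textbf{(i)} and \textbf{(ii)} from the implicit function theorem applied to the equations $\sigma_\omega(\mu,\omega)=0$ and $\sigma_\omega^{{\mathcal V}_k,{\mathcal W}_k}(\mu,\omega)=0$, and then deduce \textbf{(iii)} and \textbf{(iv)} from a gap argument combined with the second-order bound on $\sigma_{\omega\omega}$. First I would invoke Assumption~\ref{assume:smoothness}: since $\sigma(\mu_\ast,\omega_\ast)$ is a simple, positive singular value of $H[\mu_\ast](\ri\omega_\ast)$, the function $\sigma(\cdot,\cdot)$ is real-analytic (hence $C^3$) in a neighborhood of $(\mu_\ast,\omega_\ast)$; together with $\sigma_\omega(\mu_\ast,\omega_\ast)=0$ (because $\omega_\ast$ maximizes $\sigma(\mu_\ast,\cdot)$) and $\sigma_{\omega\omega}(\mu_\ast,\omega_\ast)\le\delta<0$, the implicit function theorem yields a $C^3$ function $\boldsymbol{\omega}$ on a ball $\overline{{\mathcal B}}(\mu_\ast,\eta_{\mu,0})$ with $\boldsymbol{\omega}(\mu_\ast)=\omega_\ast$ and $\sigma_\omega(\mu,\boldsymbol{\omega}(\mu))=0$. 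Shrinking $\eta_{\mu,0}$ and using continuity of $\sigma_{\omega\omega}$ gives $\sigma_{\omega\omega}(\mu,\boldsymbol{\omega}(\mu))\le\delta/2$. For \textbf{(ii)} I would apply the same argument to $\sigma^{{\mathcal V}_k,{\mathcal W}_k}$, anchoring at $(\mu^{(k)},\omega^{(k)})$ instead of $(\mu_\ast,\omega_\ast)$: here $\sigma^{{\mathcal V}_k,{\mathcal W}_k}_\omega(\mu^{(k)},\omega^{(k)})=0$ holds by construction (line~\ref{line:opt_omega} makes $\omega^{(k)}$ a maximizer; see also Lemma~\ref{thm:basic_int}\textbf{(iv)}), and the needed regularity near $(\mu_\ast,\omega_\ast)$ of the reduced transfer function follows from $\sigma_{\min}(D^{V_k,W_k}(\mu_\ast,\ri\omega_\ast))\ge\beta$ together with the uniform Lipschitz/derivative bounds of Lemmas~\ref{lemma:Lipschitz_cont} and~\ref{thm:bound_hd}; the crucial point is that \emph{all} constants here — $\eta_{\mu,0},\eta_{\omega,0}$ and the $C^3$-bounds — come out independent of $\mu^{(1)}$ because those two lemmas are uniform.

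For \textbf{(iii)}, I would argue as follows. By Assumption~\ref{assume:smoothness}\textbf{(ii)} the gap $\sigma(\mu_\ast,\omega_\ast)-\sigma_2(\mu_\ast,\omega_\ast)$ is strictly positive; by continuity of both singular values there is $\varepsilon>0$ and a ball on which $\sigma(\mu,\boldsymbol{\omega}(\mu))-\sigma_2(\mu,\boldsymbol{\omega}(\mu))\ge\varepsilon$. For the global maximality claim one must rule out competing maximizers of $\sigma(\mu,\cdot)$ outside $\overline{{\mathcal B}}(\omega_\ast,\eta_{\omega,0})$ and outside a compact $\omega$-range; this uses Assumption~\ref{assume:smoothness}\textbf{(i)} (the supremum of $\sigma(\mu_\ast,\cdot)$ is attained \emph{uniquely} at $\omega_\ast$), plus the fact that $\|H[\mu](\ri\omega)\|_2\to0$ as $|\omega|\to\infty$ (so maximizers live in a fixed compact set), and then a continuity/compactness argument to transfer uniqueness from $\mu_\ast$ to nearby $\mu$: if some $\mu_n\to\mu_\ast$ had a maximizer $\omega_n$ bounded away from $\omega_\ast$, a subsequence would converge to a maximizer of $\sigma(\mu_\ast,\cdot)$ different from $\omega_\ast$, contradicting uniqueness. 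Hence for $\mu\in\overline{{\mathcal B}}(\mu_\ast,\eta_{\mu,0})$ the maximizer is $\boldsymbol{\omega}(\mu)$ and $\sigma(\mu,\boldsymbol{\omega}(\mu))=\sup_\omega\sigma(\mu,\ri\omega)=\|H[\mu]\|_{{\mathcal H}_\infty}$, using the norm identity from the introduction. Part \textbf{(iv)} is the analogous statement for $\sigma^{{\mathcal V}_k,{\mathcal W}_k}$, but restricted to the ball $\overline{{\mathcal B}}(\omega_\ast,\eta_{\omega,0})$ — a local statement — so it reduces to: on this ball $\boldsymbol{\omega}^{{\mathcal V}_k,{\mathcal W}_k}(\mu)$ is the unique stationary point (from $\sigma^{{\mathcal V}_k,{\mathcal W}_k}_{\omega\omega}\le\delta/2<0$, which forces strict concavity hence at most one critical point) and the gap bound follows as in \textbf{(iii)} from the interpolation Lemma~\ref{thm:basic_int}\textbf{(i)},\textbf{(ii)} at $(\mu^{(k)},\omega^{(k)})$ plus Lipschitz continuity of $\sigma^{{\mathcal V}_k,{\mathcal W}_k}$ and $\sigma^{{\mathcal V}_k,{\mathcal W}_k}_2$ in Lemma~\ref{lemma:Lipschitz_cont}\textbf{(iii)}, with all constants uniform in $\mu^{(1)}$.

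The main obstacle I anticipate is the \emph{uniformity} bookkeeping rather than any single hard estimate: one must check that every neighborhood radius and every bound produced by the implicit function theorem and by the gap/concavity arguments can be chosen independently of $\mu^{(1)}$ and of the iteration index $k$. This is where Lemmas~\ref{lemma:Lipschitz_cont} and~\ref{thm:bound_hd} do the real work — they guarantee that the reduced transfer function, its first, second and third derivatives, and the two largest reduced singular values are controlled on a fixed ball by constants that do not see $\mu^{(1)}$ — so the quantitative form of the implicit function theorem (whose neighborhood size depends only on the size of the second derivative and a lower bound on $|\sigma_{\omega\omega}|$, here $\ge|\delta|$) produces $\eta_{\mu,0},\eta_{\omega,0}$ uniform in $\mu^{(1)}$. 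A secondary subtlety is the anchoring in \textbf{(ii)}: $\boldsymbol{\omega}^{{\mathcal V}_k,{\mathcal W}_k}$ is pinned at $(\mu^{(k)},\omega^{(k)})$, which itself depends on $\mu^{(1)}$, so one must first argue (using $\mu^{(k)}\to\mu_\ast$ and $\omega^{(k)}\to\omega_\ast$, the latter obtainable from Lemma~\ref{thm:basic_int}\textbf{(i)} and Assumption~\ref{assume:smoothness}) that $(\mu^{(k)},\omega^{(k)})$ lies in the fixed ball around $(\mu_\ast,\omega_\ast)$ before invoking the implicit function theorem there.
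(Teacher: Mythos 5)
Your plan for parts \textbf{(i)}, \textbf{(ii)} and \textbf{(iv)} is essentially the paper's own argument: the implicit function theorem anchored at $(\mu_\ast,\omega_\ast)$ for the full function and at $\big(\mu^{(k)},\omega^{(k)}\big)$ for the reduced one (after noting $\mu^{(k)}\rightarrow\mu_\ast$, $\omega^{(k)}\rightarrow\omega_\ast$ via the interpolation property and the uniqueness of $\omega_\ast$), with the uniform third-derivative bounds of Lemma~\ref{thm:bound_hd} guaranteeing that $\sigma^{{\mathcal V}_k,{\mathcal W}_k}_{\omega\omega}\leq\delta/2$ on a fixed neighborhood independent of $\mu^{(1)}$, hence uniform radii $\eta_{\mu,0},\eta_{\omega,0}$; and for \textbf{(iv)} strict concavity of $\sigma^{{\mathcal V}_k,{\mathcal W}_k}(\mu,\cdot)$ on $\overline{{\mathcal B}}(\omega_\ast,\eta_{\omega,0})$ plus the gap obtained from interpolation at $\big(\mu^{(k)},\omega^{(k)}\big)$ and the uniform Lipschitz bounds of Lemma~\ref{lemma:Lipschitz_cont}. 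This all matches.

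The genuine gap is in your globalization step for \textbf{(iii)}. You invoke $\left\|H[\mu](\ri\omega)\right\|_2\rightarrow 0$ as $|\omega|\rightarrow\infty$ to confine maximizers to a fixed compact set, but this is not available in the setting of the paper: $H[\mu]$ is only assumed to lie in $\cH_\infty^{\sp\times\sm}$, i.e.\ to be bounded, and for descriptor systems the transfer function generically has a nonzero limit at $\omega=\infty$ (the algorithms even maximize over $\omega\in\R\cup\{\infty\}$ for this reason). Without decay, your sequential-compactness argument neither rules out maximizing sequences with $\omega_n\rightarrow\infty$ nor yields existence of a global maximizer of $\sigma(\mu,\cdot)$ for each nearby $\mu$, so the identification of $\boldsymbol{\omega}(\mu)$ as \emph{the} global maximizer is not established. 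The paper circumvents this by a quantitative comparison: with $\delta_1(\mu):=\sup\{\sigma(\mu,\omega)\,|\,\omega\in\overline{{\mathcal B}}(\omega_\ast,\eta_{\omega,0})\}$ and $\delta_2(\mu):=\sup\{\sigma(\mu,\omega)\,|\,\omega\in\R\setminus\overline{{\mathcal B}}(\omega_\ast,\eta_{\omega,0})\}$, one has $\delta_\ast:=\delta_1(\mu_\ast)-\delta_2(\mu_\ast)>0$ from the unique-attainment assumption, and both functions are Lipschitz in $\mu$ (see \cite[Lemma~8]{Mengi2018}); choosing $\eta_{\mu,0}<\delta_\ast/(4\zeta)$ forces every global maximizer of $\sigma(\mu,\cdot)$ into the interior of $\overline{{\mathcal B}}(\omega_\ast,\eta_{\omega,0})$, where by stationarity and part \textbf{(i)} it must equal $\boldsymbol{\omega}(\mu)$. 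Your route could be repaired by replacing the decay claim with uniform control of $\sigma(\mu,\omega)$ for large $|\omega|$ (e.g.\ continuity in $\mu$ of the limit value at infinity together with the strict gap at $\mu_\ast$), but as written this step fails for precisely the class of systems treated here.
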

	
\begin{proof}
As argued in the opening of the proof of Lemma~\ref{thm:bound_hd}, we have
\begin{equation}\label{eq:sval_gap}
	\sigma(\mu, \omega) - \sigma_2(\mu, \omega)  \geq   \widehat{\varepsilon}
	\quad	\forall (\mu, \omega) \in 
			\overline{{\mathcal B}}(\mu_\ast, \widehat{\eta}_{\mu}) \times \overline{{\mathcal B}}(\omega_\ast, \widehat{\eta}_{\omega})
\end{equation}
for some $\widehat{\varepsilon} > 0,\, \widehat{\eta}_{\mu} > 0,\, \widehat{\eta}_{\omega} > 0$, and 
\begin{equation}\label{eq:sval_gap2}	
	\sigma^{{\mathcal V}_k, {\mathcal W}_k}(\mu, \omega) - \sigma^{{\mathcal V}_k, {\mathcal W}_k}_2(\mu, \omega)	
							\geq  \varepsilon
	\quad	\forall (\mu, \omega) \in 
			\overline{{\mathcal B}}(\mu_\ast, \widetilde{\eta}_{\mu}) \times \overline{{\mathcal B}}(\omega_\ast, \widetilde{\eta}_{\omega})
\end{equation}
for some $\varepsilon \in (0, \widehat{\varepsilon} ),\, \widetilde{\eta}_{\mu} > 0,\, \widetilde{\eta}_{\omega} > 0$. 
An important point here is that $\varepsilon,\, \widetilde{\eta}_{\mu},\, \widetilde{\eta}_{\omega}$ do not depend on $\mu^{(1)}$.
The function $\sigma(\cdot,\cdot)$ is real analytic in the interior of
$\overline{{\mathcal B}}(\mu_\ast, \widehat{\eta}_{\mu}) \times \overline{{\mathcal B}}(\omega_\ast, \widehat{\eta}_{\omega})$, whereas
$\sigma^{{\mathcal V}_k, {\mathcal W}_k}(\cdot,\cdot)$ is real analytic in the interior of
$\overline{{\mathcal B}}(\mu_\ast, \widetilde{\eta}_{\mu}) \times \overline{{\mathcal B}}(\omega_\ast, \widetilde{\eta}_{\omega})$. 
Moreover, by Lemma~\ref{thm:bound_hd}, there exists a $\widetilde{\delta} > 0$ such that
\begin{equation}\label{eq:boundedness_3der}
	\left| \sigma^{{\mathcal V}_k, {\mathcal W}_k}_{\omega \omega \omega}(\mu,\omega) \right|  \leq  \widetilde{\delta}
\end{equation}
holds uniformly for all $(\mu, \omega)$ in a neighborhood of $(\mu_\ast, \omega_\ast)$, where $\widetilde{\delta}$
and the neighborhood are independent of $\mu^{(1)}$.
Now we prove the four statements of the proposition:

\textbf{(i)} Since $\sigma(\cdot,\cdot)$ is real analytic with continuous second derivatives
in a neighborhood of $(\mu_\ast, \omega_\ast)$, its second derivative $\sigma_{\omega \omega}(\cdot,\cdot)$ 
must be bounded from above by $\delta/2$ in another neighborhood of $(\mu_\ast, \omega_\ast)$.
Then the assertion follows immediately from the implicit function theorem.

\textbf{(ii)} Due to \eqref{eq:boundedness_3der} 
the condition $\sigma^{{\mathcal V}_k,{\mathcal W}_k}_{\omega \omega} (\mu, \omega) \leq \delta /2$ 
must hold in an open neighborhood ${\mathcal N}$ of $(\mu_\ast, \omega_\ast)$ independent of $\mu^{(1)}$.
Additionally, observe  that $\omega^{(k)} \rightarrow \omega_\ast,\, \mu^{(k)} \rightarrow \mu_\ast$ 
as $\mu^{(1)} \rightarrow \mu_\ast$ due to 
\[
	\sigma(\mu_\ast, \omega_\ast)	=
	\left\| H[\mu_\ast] \right\|_{{\mathcal H}_\infty} = 
	    \lim_{\mu^{(1)} \rightarrow \mu_\ast} \big\| H\big[\mu^{(k)}\big] \big\|_{{\mathcal H}_\infty} = 
	         \lim_{\mu^{(1)} \rightarrow \mu_\ast} \sigma\big(\mu^{(k)}, \omega^{(k)}\big),
\]
as well as the uniqueness of $\omega_\ast$ as the maximizer of $\sigma(\mu_\ast, \cdot)$ and
the continuity of $\sigma(\cdot,\cdot)$. We assume ${\| \mu^{(1)} - \mu_\ast \|}_2$ is small enough so that
$\big(\mu^{(k)},\, \omega^{(k)}\big) \in {\mathcal N}$, particularly
$\sigma^{{\mathcal V}_k,{\mathcal W}_k}_{\omega \omega} \big(\mu^{(k)}, \omega^{(k)}\big) \leq \delta /2 < 0$.
Now the assertion again follows from the implicit function theorem. The uniformity of the radii
$\eta_{\mu,0},\,\eta_{\omega,0}$ over all $\mu^{(1)}$ (as long as ${\| \mu^{(1)} - \mu_\ast \|}_2$ is small enough) 
follows from the uniform upper bound $\delta/2$ on the second derivatives.

\textbf{(iii)} Assume $\eta_{\mu,0} \leq \widehat{\eta}_{\mu}$, $\eta_{\omega,0} \leq \widehat{\eta}_{\omega}$ without loss of generality,
where $\widehat{\eta}_{\mu},\, \widehat{\eta}_{\omega}$ are as in \eqref{eq:sval_gap}. But then for 
$\mu \in \overline{{\mathcal B}}(\mu_\ast, \eta_{\mu,0}) \subseteq \overline{{\mathcal B}}(\mu_\ast, \widehat{\eta}_{\mu})$, we have 
$\boldsymbol{\omega}(\mu) \in \overline{{\mathcal B}}(\omega_\ast, \eta_{\omega,0}) \subseteq \overline{{\mathcal B}}(\omega_\ast, \widehat{\eta}_{\omega})$.
Hence, \eqref{eq:sval_gap} implies 
$\sigma(\mu, \boldsymbol{\omega}(\mu)) - \sigma_2(\mu,\boldsymbol{\omega}(\mu)) \geq \widehat{\varepsilon} > \varepsilon$.

To show that $\boldsymbol{\omega}(\mu)$ is the unique global maximizer of $\sigma(\mu,\cdot)$ for all $\mu \in \overline{{\mathcal B}}(\mu_\ast, \eta_{\mu,0})$, we introduce
\begin{equation*}
	\delta_1(\mu) := \sup \{	\sigma(\mu, \omega) \; | \; \omega \in \overline{{\mathcal B}}(\omega_\ast, \eta_{\omega,0}) 	\}, \quad 
	\delta_2(\mu) := \sup \{	\sigma(\mu, \omega) \; | \; \omega \in {\mathbb R} \setminus \overline{{\mathcal B}}(\omega_\ast, \eta_{\omega,0}) 	\},
\end{equation*}
and let $\delta_\ast := \delta_1(\mu_\ast) - \delta_2(\mu_\ast) > 0$. 
As argued at the beginning of the proof of Lemma~\ref{thm:bound_hd}, there exists a neighborhood $\widetilde{\mathcal N}$
of $(\mu_\ast, \omega_\ast)$ where the transfer function $(\mu, \omega) \mapsto H[\mu]({\rm i} \omega)$ is continuously differentiable. 
As a result, the largest singular value function $\sigma(\cdot, \cdot)$ 
is Lipschitz continuous, say with the Lipschitz constant $\zeta$ over $\widetilde{\mathcal N}$
which we assume contains $\overline{{\mathcal B}}(\mu_\ast, \eta_{\mu,0}) \times \overline{{\mathcal B}}(\omega_\ast, \eta_{\omega,0})$ without
loss of generality. The functions $\delta_1(\cdot)$ and $\delta_2(\cdot)$ are also Lipschitz continuous with
the Lipschitz constant $\zeta$ over $\overline{{\mathcal B}}(\mu_\ast, \eta_{\mu,0})$, see \cite[Lemma~8 (ii)]{Mengi2018} (that concerns the minimization of a 
smallest singular value rather than the maximization of a largest singular value as in here, but the proof over there can be modified 
in a straightforward manner).  We furthermore assume $\eta_{\mu,0} < \delta_\ast / (4 \zeta)$ 
without loss of generality (since we can choose $\eta_{\mu,0}$ as small as we wish), so
\begin{equation*}
	\delta_1(\mu) \geq \delta_1(\mu_\ast)	- \delta_\ast/4	\quad	\text{and}		\quad
	\delta_2(\mu_\ast) \geq \delta_2(\mu) - \delta_\ast/4
\end{equation*}
for all $\mu \in \overline{{\mathcal B}}(\mu_\ast, \eta_{\mu,0})$ by the Lipschitz continuity of $\delta_1(\cdot)$ and $\delta_2(\cdot)$.
These inequalities combined with $\delta_1(\mu_\ast) - \delta_2(\mu_\ast) = \delta_\ast$ yield
\begin{equation*}
	\delta_1(\mu)	-	\delta_2(\mu)	\geq		\delta_1(\mu_\ast)	-	\delta_2(\mu_\ast)	-	\delta_\ast / 2
								=	\delta_\ast / 2
\end{equation*}
for all $\mu \in \overline{{\mathcal B}}(\mu_\ast, \eta_{\mu,0})$. This means that any global maximizer 
$\widetilde{\boldsymbol{\omega}}(\mu)$ of $\sigma(\mu, \cdot)$ lies in the interior of 
$\overline{{\mathcal B}}(\omega_\ast, \eta_{\omega,0})$. Since $\sigma(\cdot, \cdot)$ is differentiable in a neighborhood of
$\overline{{\mathcal B}}(\mu_*, \eta_{\mu,0}) \times \overline{{\mathcal B}}(\omega_\ast, \eta_{\omega,0})$, we must have
$\sigma_{\omega}(\mu, \widetilde{\boldsymbol{\omega}}(\mu)) = 0$. We conclude from part \textbf{(i)} that
$\widetilde{\boldsymbol{\omega}}(\mu) = \boldsymbol{\omega}(\mu)$ and that it is the unique global
maximizer of $\sigma(\mu,\cdot)$.

\textbf{(iv)} We assume without loss of generality that $\eta_{\mu,0} \leq \widetilde{\eta}_{\mu}$ and $\eta_{\omega,0} \leq \widetilde{\eta}_{\omega}$ .
Consequently, 
$\boldsymbol{\omega}^{{\mathcal V}_k, {\mathcal W}_k} (\mu) \in \overline{{\mathcal B}}(\omega_\ast, \eta_{\omega,0}) 
							\subseteq \overline{{\mathcal B}}(\omega_\ast, \widetilde{\eta}_{\omega})$
for all $\mu \in \overline{{\mathcal B}}(\mu_\ast, \eta_{\mu,0}) \subseteq \overline{{\mathcal B}}(\mu_\ast, \widetilde{\eta}_{\mu})$, so
\eqref{eq:sval_gap2} yields
$\sigma^{{\mathcal V}_k, {\mathcal W}_k}\big(\mu, \boldsymbol{\omega}^{{\mathcal V}_k, {\mathcal W}_k}(\mu)\big) 
	- \sigma^{{\mathcal V}_k, {\mathcal W}_k}_2\big(\mu,\boldsymbol{\omega}^{{\mathcal V}_k, {\mathcal W}_k}(\mu)\big) \geq \varepsilon$
for such $\mu$. The uniqueness of $\boldsymbol{\omega}^{{\mathcal V}_k, {\mathcal W}_k} (\mu)$ as the stationary point 
of $\sigma^{{\mathcal V}_k, {\mathcal W}_k}(\mu, \cdot)$ in $\overline{{\mathcal B}}(\omega_\ast, \eta_{\omega,0})$ is immediate from the implicit 
function theorem. Additionally, without loss of generality, we can assume 
$\overline{{\mathcal B}}(\mu_\ast, \eta_{\mu,0}) \times \overline{{\mathcal B}}(\omega_\ast, \eta_{\omega,0}) \subseteq {\mathcal N}$
where ${\mathcal N}$ is the neighborhood of $(\mu_\ast, \omega_\ast)$ as in part \textbf{(ii)} over
which $\sigma^{{\mathcal V}_k,{\mathcal W}_k}_{\omega \omega} (\mu, \omega)$ $\leq \delta /2 < 0$.
This means $\sigma^{{\mathcal V}_k, {\mathcal W}_k}(\mu, \cdot)$ is strictly concave in $\overline{{\mathcal B}}(\omega_\ast, \eta_{\omega,0})$. 
Thus, the unique stationary point $\boldsymbol{\omega}^{{\mathcal V}_k, {\mathcal W}_k} (\mu)$ must also be the unique global maximizer
of $\sigma^{{\mathcal V}_k, {\mathcal W}_k}(\mu, \cdot)$ in $\overline{{\mathcal B}}(\omega_\ast, \eta_{\omega,0})$.
\end{proof}

\begin{remark}
	The second condition in (\ref{eq:secder_bounds}) can be dropped in theory by including
	additional vectors in the subspaces and doubling the subspace dimensions as follows: The interpolation
	property	
	\[
		\sigma_\omega\big( \mu^{(k)}, \omega^{(k-1)}\big) =	
				\sigma^{{\mathcal V}_k, {\mathcal W}_k}_\omega\big( \mu^{(k)}, \omega^{(k-1)}\big)
	\]	
	can be achieved, for instance, by the inclusions
	\begin{align*}
	 \operatorname{Col}\left(\left(\ri\omega^{(k-1)}E\big(\mu^{(k)}\big) - A\big(\mu^{(k)}\big)\right)^{-1}B\big(\mu^{(k)}\big)\right) &\subseteq {\mathcal V}_k, \\ 
	 \operatorname{Col}\left(\left(\ri\omega^{(k-1)}E\big(\mu^{(k)}\big) - A\big(\mu^{(k)}\big)\right)^{-\ast}C\big(\mu^{(k)}\big)^\ast \right) 
		 &\subseteq {\mathcal W}_k,
	\end{align*}
 	when $\sm = \sp$. By the mean value theorem, this would lead to
	\begin{align*}
		\sigma_{\omega\omega}\big(\mu^{(k)}, \xi^{(k)}\big)
					& =
		\frac{  \sigma_\omega\big( \mu^{(k)}, \omega^{(k)}\big)  - \sigma_\omega\big( \mu^{(k)}, \omega^{(k-1)}\big)  }
		{\omega^{(k)} - \omega^{(k-1)}}		\\
					& =
		\frac{  \sigma^{{\mathcal V}_k, {\mathcal W}_k}_\omega\big( \mu^{(k)}, \omega^{(k)}\big)  - 
							\sigma^{{\mathcal V}_k, {\mathcal W}_k}_\omega\big( \mu^{(k)}, \omega^{(k-1)}\big) }
		{\omega^{(k)} - \omega^{(k-1)}}
					=
		\sigma^{{\mathcal V}_k, {\mathcal W}_k}_{\omega\omega}\big(\mu^{(k)}, \widetilde{\xi}^{(k)}\big)
	\end{align*}
	for some $ \xi^{(k)}, \, \widetilde{\xi}^{(k)}$ in the open interval with the end-points $\omega^{(k-1)}$,
	$\omega^{(k)}$ so that
	\[
			\left|
			\sigma_{\omega \omega}\big( \mu^{(k)}, \omega^{(k)}\big)
					-
			\sigma^{{\mathcal V}_k, {\mathcal W}_k}_{\omega \omega}\big( \mu^{(k)}, \omega^{(k)}\big)
			\right| = 
			\mathcal{O}\left(\big| \omega^{(k)} - \omega^{(k-1)} \big|\right).
	\]
	Hence, by the continuity of the second derivatives with respect to $\omega$ and 
	$\big(\mu^{(k)}, \omega^{(k)}\big) \rightarrow (\mu_\ast, \omega_\ast)$,
	the condition $\sigma_{\omega \omega}(\mu_\ast,\omega_\ast) =: \widehat{\delta} < 0$ would imply 
	$\sigma^{{\mathcal V}_k, {\mathcal W}_k}_{\omega \omega}(\mu_\ast,\omega_\ast) < \widehat{\delta}/2 < 0$
	provided $\mu^{(1)}$ is chosen sufficiently close to $\mu_\ast$. 
\end{remark}
	
The main conclusion of the next result is that the higher-order derivatives of 
$\mu \mapsto \sigma^{{\mathcal V}_k, {\mathcal W}_k}\big(\mu, \boldsymbol{\omega}^{{\mathcal V}_k, {\mathcal W}_k}(\mu)\big)$ 
 are uniformly bounded in absolute value by a constant.
\begin{lemma}[Uniform boundedness of higher-order derivatives]\label{thm:3der_bounded}
Suppose that the assumptions of Proposition~\ref{thm:localrep_smoothness} hold. Furthermore, let $\eta_{\mu,0}$ and $\boldsymbol{\omega}^{{\mathcal V}_k, {\mathcal W}_k}(\mu)$ be as in Proposition~\ref{thm:localrep_smoothness}, and let 
\begin{equation}\label{eq:reduced_sfun}
	\widetilde{\sigma}^{{\mathcal V}_k, {\mathcal W}_k}(\mu) :=  \sigma^{{\mathcal V}_k, {\mathcal W}_k}(\mu, \boldsymbol{\omega}^{{\mathcal V}_k, {\mathcal W}_k}(\mu)).
\end{equation}
Then for every $\widehat{\eta}_{\mu,0} \in (0, \eta_{\mu,0})$ there exists $\gamma > 0$ independent of $\mu^{(1)}$ such that for all 
$\mu \in \overline{{\mathcal B}}(\mu_\ast,\widehat{\eta}_{\mu,0})$, 
we have \vspace*{1ex}
\begin{enumerate}
\item[\bf (i)]
$
	\displaystyle
	\left|
	\frac{\partial^2 {\| H[\mu] \|}_{{\mathcal H}_\infty}}{\partial \mu_q \partial \mu_r }
	\right| \leq \gamma
		\quad	\text{and} \quad
	\left|
	\widetilde{\sigma}^{{\mathcal V}_k, {\mathcal W}_k}_{\mu_q  \mu_r} (\mu)
	\right| \leq \gamma, \quad 
		 q,\, r = 1,\,\dots,\, \sd,
$ \vspace*{1ex}
\item[\bf (ii)]
$
	\displaystyle
	\left|
	\frac{\partial^3 {\| H[\mu] \|}_{{\mathcal H}_\infty}}{\partial \mu_q \partial \mu_r \partial \mu_\ell}
	\right|
		\leq  \gamma
		\quad \text{and} \quad
	 \left|
		 \widetilde{\sigma}^{{\mathcal V}_k, {\mathcal W}_k}_{\mu_q  \mu_r  \mu_\ell}(\mu)
	\right|
		\leq \gamma,
		\quad
		 q,\, r,\, \ell = 1,\,\dots,\, \sd.
$
\end{enumerate}
\end{lemma}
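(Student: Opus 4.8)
The plan is to regard both functions as compositions: by Proposition~\ref{thm:localrep_smoothness}\,\textbf{(iii)} we have ${\| H[\mu] \|}_{{\mathcal H}_\infty} = \sigma(\mu, \boldsymbol{\omega}(\mu))$, and by the definition \eqref{eq:reduced_sfun} we have $\widetilde{\sigma}^{{\mathcal V}_k, {\mathcal W}_k}(\mu) = \sigma^{{\mathcal V}_k, {\mathcal W}_k}(\mu, \boldsymbol{\omega}^{{\mathcal V}_k, {\mathcal W}_k}(\mu))$, and then to differentiate through these compositions, bounding every factor that appears by a constant independent of $\mu^{(1)}$ and of $k$. Two inputs carry the argument. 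First, Lemma~\ref{thm:bound_hd} gives a single constant $U$, independent of $\mu^{(1)}$, bounding all partial derivatives of $\sigma^{{\mathcal V}_k, {\mathcal W}_k}(\cdot,\cdot)$ of orders one, two, and three in the variables $\{\omega\}\cup\{\mu_j\}$, uniformly over a box $\overline{{\mathcal B}}(\mu_\ast,\eta_\mu)\times\overline{{\mathcal B}}(\omega_\ast,\eta_\omega)$; the same bound for $\sigma(\cdot,\cdot)$ is automatic, since $\sigma$ does not depend on $\mu^{(1)}$ and is real analytic near $(\mu_\ast,\omega_\ast)$, hence has continuous --- thus bounded on a compact sub-box --- partial derivatives of every order. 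Second, Proposition~\ref{thm:localrep_smoothness}\,\textbf{(i)}--\textbf{(ii)} provides the uniform \emph{negative} upper bounds $\sigma_{\omega\omega}(\mu,\boldsymbol{\omega}(\mu))\le\delta/2$ and $\sigma^{{\mathcal V}_k, {\mathcal W}_k}_{\omega\omega}(\mu,\boldsymbol{\omega}^{{\mathcal V}_k, {\mathcal W}_k}(\mu))\le\delta/2$ along the respective stationary curves, i.e., the denominators produced by the implicit function theorem stay bounded away from zero, uniformly in $\mu^{(1)}$. Shrinking all radii $\eta_\mu,\eta_\omega,\eta_{\mu,0},\eta_{\omega,0}$ to their common minimum and using $\widehat{\eta}_{\mu,0}<\eta_{\mu,0}$ (so that, for $\mu\in\overline{{\mathcal B}}(\mu_\ast,\widehat{\eta}_{\mu,0})$, both $\boldsymbol{\omega}$-functions are three times continuously differentiable and their graphs lie inside the above box), all estimates below hold simultaneously.

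\textbf{Step 1: derivatives of the optimal-frequency maps.} Differentiating the identity $\sigma^{{\mathcal V}_k, {\mathcal W}_k}_\omega(\mu,\boldsymbol{\omega}^{{\mathcal V}_k, {\mathcal W}_k}(\mu))=0$ once in $\mu_j$ gives $\partial_{\mu_j}\boldsymbol{\omega}^{{\mathcal V}_k, {\mathcal W}_k} = -\sigma^{{\mathcal V}_k, {\mathcal W}_k}_{\omega\mu_j}/\sigma^{{\mathcal V}_k, {\mathcal W}_k}_{\omega\omega}$, numerator $\le U$ and denominator $\le\delta/2<0$, whence $|\partial_{\mu_j}\boldsymbol{\omega}^{{\mathcal V}_k, {\mathcal W}_k}|\le 2U/|\delta|$. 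Differentiating a second time expresses $\partial^2_{\mu_q\mu_r}\boldsymbol{\omega}^{{\mathcal V}_k, {\mathcal W}_k}$ as a rational expression whose numerator is a polynomial in the at-most-third-order partials of $\sigma^{{\mathcal V}_k, {\mathcal W}_k}$ and in the already-bounded first derivatives of $\boldsymbol{\omega}^{{\mathcal V}_k, {\mathcal W}_k}$, divided by a power of $\sigma^{{\mathcal V}_k, {\mathcal W}_k}_{\omega\omega}$; hence $|\partial^2_{\mu_q\mu_r}\boldsymbol{\omega}^{{\mathcal V}_k, {\mathcal W}_k}|$ is bounded by a constant depending only on $U$, $|\delta|$, and $\sd$. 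The identical computation with $\sigma$ and $\boldsymbol{\omega}$ in place of $\sigma^{{\mathcal V}_k, {\mathcal W}_k}$ and $\boldsymbol{\omega}^{{\mathcal V}_k, {\mathcal W}_k}$ bounds the first and second derivatives of $\boldsymbol{\omega}(\cdot)$.

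\textbf{Step 2: the chain rule, via the envelope identity.} Because $\sigma^{{\mathcal V}_k, {\mathcal W}_k}_\omega(\mu,\boldsymbol{\omega}^{{\mathcal V}_k, {\mathcal W}_k}(\mu))=0$, the first derivative collapses to $\widetilde{\sigma}^{{\mathcal V}_k, {\mathcal W}_k}_{\mu_q}(\mu) = \sigma^{{\mathcal V}_k, {\mathcal W}_k}_{\mu_q}(\mu,\boldsymbol{\omega}^{{\mathcal V}_k, {\mathcal W}_k}(\mu))$; differentiating once more gives $\widetilde{\sigma}^{{\mathcal V}_k, {\mathcal W}_k}_{\mu_q\mu_r} = \sigma^{{\mathcal V}_k, {\mathcal W}_k}_{\mu_q\mu_r} + \sigma^{{\mathcal V}_k, {\mathcal W}_k}_{\mu_q\omega}\,\partial_{\mu_r}\boldsymbol{\omega}^{{\mathcal V}_k, {\mathcal W}_k}$, and differentiating a third time writes $\widetilde{\sigma}^{{\mathcal V}_k, {\mathcal W}_k}_{\mu_q\mu_r\mu_\ell}$ as a finite sum of products of at-most-third-order partials of $\sigma^{{\mathcal V}_k, {\mathcal W}_k}$ with at-most-second-order derivatives of $\boldsymbol{\omega}^{{\mathcal V}_k, {\mathcal W}_k}$ --- all of which were bounded in Step~1 and by Lemma~\ref{thm:bound_hd}. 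Choosing $\gamma$ to be the resulting finite expression in $U$, $|\delta|$, and $\sd$ proves the estimates for $\widetilde{\sigma}^{{\mathcal V}_k, {\mathcal W}_k}$ in \textbf{(i)}--\textbf{(ii)}; repeating verbatim with $\sigma$, $\boldsymbol{\omega}$, and ${\| H[\mu] \|}_{{\mathcal H}_\infty} = \sigma(\mu,\boldsymbol{\omega}(\mu))$ yields the remaining two estimates, the constant there being $\mu^{(1)}$-independent for the trivial reason that nothing in it refers to the subspaces. Enlarging $\gamma$ once to cover all four cases finishes the proof.

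The real work, and the only genuine obstacle, is organizational rather than conceptual: one must carry out the chain-rule expansions for the second and third derivatives carefully, and --- more importantly --- check at every step that each constant that enters (the bound $U$ of Lemma~\ref{thm:bound_hd}, the gap $|\delta|$ of Proposition~\ref{thm:localrep_smoothness}, and the several neighbourhood radii) is truly independent of $\mu^{(1)}$ and of the iteration index $k$, so that one $\gamma$ works uniformly as claimed. The envelope identity $\widetilde{\sigma}^{{\mathcal V}_k, {\mathcal W}_k}_{\mu_q} = \sigma^{{\mathcal V}_k, {\mathcal W}_k}_{\mu_q}$ is what keeps the order of $\boldsymbol{\omega}$-derivatives needed at two rather than three, so that the third-order bound furnished by Lemma~\ref{thm:bound_hd} is exactly sufficient.
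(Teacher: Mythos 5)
Your proposal is correct and follows essentially the same route as the paper's proof: implicit differentiation of the stationarity condition $\sigma^{{\mathcal V}_k,{\mathcal W}_k}_\omega(\mu,\boldsymbol{\omega}^{{\mathcal V}_k,{\mathcal W}_k}(\mu))=0$, the chain rule for the composition, the uniform derivative bounds of Lemma~\ref{thm:bound_hd}, and the uniform negative upper bound $\delta/2$ on $\sigma^{{\mathcal V}_k,{\mathcal W}_k}_{\omega\omega}$ from Proposition~\ref{thm:localrep_smoothness}. The only difference is organizational: the paper substitutes the formula for $\boldsymbol{\omega}^{{\mathcal V}_k,{\mathcal W}_k}_{\mu_r}$ to obtain an explicit ratio whose denominator is a power of $\sigma^{{\mathcal V}_k,{\mathcal W}_k}_{\omega\omega}$, whereas you bound the first and second derivatives of the optimal-frequency map as intermediate quantities, which amounts to the same estimate.
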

\begin{proof}
\textbf{(i)} By Proposition~\ref{thm:localrep_smoothness}, the functions ${\| H[\cdot] \|}_{{\mathcal H}_\infty}$ and 
$\widetilde{\sigma}^{{\mathcal V}_k, {\mathcal W}_k}(\cdot)$ are three times continuously differentiable in a neighborhood 
of $\overline{{\mathcal B}}(\mu_\ast, \widehat{\eta}_{\mu,0})$. The first assertion, that is the boundedness 
of the second derivatives of ${\| H[\cdot] \|}_{{\mathcal H}_\infty}$ in $\overline{{\mathcal B}}(\mu_\ast, \widehat{\eta}_{\mu,0})$, 
is immediate. Let us prove the existence of a uniform $\gamma > 0$ such that
\begin{equation}\label{eq:bound_2der_rproblems}
	 \left| \widetilde{\sigma}^{{\mathcal V}_k, {\mathcal W}_k}_{\mu_q  \mu_r} (\mu) \right| \leq \gamma 
	\quad \forall\, \mu \in \overline{{\mathcal B}}(\mu_\ast,\widehat{\eta}_{\mu,0}) 
\end{equation}
for $q,\, r = 1,\,\dots,\, d$ independent of $\mu^{(1)}$. To this end, we first observe
\begin{equation}\label{eq:reduced_sval_2der}
	 \widetilde{\sigma}^{{\mathcal V}_k, {\mathcal W}_k}_{\mu_q  \mu_r} (\mu)
	 			 = 
	  \sigma^{{\mathcal V}_k, {\mathcal W}_k}_{\mu_q  \mu_r} \big(\mu, \boldsymbol{\omega}^{{\mathcal V}_k, {\mathcal W}_k}(\mu)\big) 
				  + 	
	  	\sigma^{{\mathcal V}_k, {\mathcal W}_k}_{\mu_q \omega} \big(\mu, \boldsymbol{\omega}^{{\mathcal V}_k, {\mathcal W}_k}(\mu)\big)
		\boldsymbol{\omega}^{{\mathcal V}_k, {\mathcal W}_k}_{\mu_r}(\mu).
\end{equation}
The function $\boldsymbol{\omega}^{{\mathcal V}_k, {\mathcal W}_k}(\cdot)$ is implicitly defined by the equation
$
	\sigma^{{\mathcal V}_k, {\mathcal W}_k}_\omega\big(\mu, \boldsymbol{\omega}^{{\mathcal V}_k, {\mathcal W}_k}(\mu)\big)  = 0
$
for $\mu$ near $\mu^{(k)}$. Differentiating this equation with respect to $\mu_r$ yields
\[
	\boldsymbol{\omega}^{{\mathcal V}_k, {\mathcal W}_k}_{\mu_r}(\mu) 
		 = 
	-   \frac{ \sigma^{{\mathcal V}_k, {\mathcal W}_k}_{\mu_r \omega} \big(\mu,\boldsymbol{\omega}^{{\mathcal V}_k, {\mathcal W}_k}(\mu)\big)} {\sigma^{{\mathcal V}_k, {\mathcal W}_k}_{\omega\omega} \big(\mu,\boldsymbol{\omega}^{{\mathcal V}_k, {\mathcal W}_k}(\mu)\big)},
\]
which we plug into \eqref{eq:reduced_sval_2der} to obtain
\begin{equation}\label{eq:reduced_sval_2derf}
	\widetilde{\sigma}^{{\mathcal V}_k, {\mathcal W}_k}_{\mu_q  \mu_r} (\mu)
	 			  =  
\sigma^{{\mathcal V}_k, {\mathcal W}_k}_{\mu_q  \mu_r} \big(\mu,\boldsymbol{\omega}^{{\mathcal V}_k, {\mathcal W}_k}(\mu)\big) 
				 -	
		\frac{  \sigma^{{\mathcal V}_k, {\mathcal W}_k}_{\mu_q \omega} \big(\mu,\boldsymbol{\omega}^{{\mathcal V}_k, {\mathcal W}_k}(\mu)\big)   
					\sigma^{{\mathcal V}_k, {\mathcal W}_k}_{\mu_r \omega} \big(\mu,\boldsymbol{\omega}^{{\mathcal V}_k, {\mathcal W}_k}(\mu)\big)  }
		{\sigma^{{\mathcal V}_k, {\mathcal W}_k}_{\omega\omega} \big(\mu,\boldsymbol{\omega}^{{\mathcal V}_k, {\mathcal W}_k}(\mu)\big) }.
\end{equation}
By part \textbf{(ii)} of Proposition~\ref{thm:localrep_smoothness}, we have 
$\sigma_{\omega \omega}^{{\mathcal V}_k, {\mathcal W}_k}\big(\mu,\boldsymbol{\omega}^{{\mathcal V}_k,{\mathcal W}_k}(\mu)\big) \leq \delta/2 < 0$
for all $\mu \in \overline{{\mathcal B}}(\mu_\ast, \eta_{\mu,0})$ independent of $\mu^{(1)}$. 
Additionally, by Lemma~\ref{thm:bound_hd}, all mixed second derivatives of $\sigma^{{\mathcal V}_k, {\mathcal W}_k}(\cdot,\cdot)$ are bounded
from above in absolute value uniformly in 
$\overline{{\mathcal B}}(\mu_\ast,\eta_\mu) \times \overline{{\mathcal B}}(\omega_\ast,\eta_\omega) \supseteq 
			\overline{{\mathcal B}}(\mu_\ast,\eta_{\mu,0}) \times \overline{{\mathcal B}}(\omega_\ast,\eta_{\omega,0})$
(to be precise we assume the inclusion without loss of generality as we can choose $\eta_{\mu,0},\, \eta_{\omega,0}$ as small as we wish), 
where the upper bound is independent of $\mu^{(1)}$. Hence, we conclude with (\ref{eq:bound_2der_rproblems}) as desired.

\textbf{(ii)} The boundedness of the third derivatives of ${\| H[\cdot] \|}_{{\mathcal H}_\infty}$ in $\overline{{\mathcal B}}(\mu_\ast,\eta_{\mu,0})$ 
is immediate from three times continuous differentiability of ${\| H[\cdot] \|}_{{\mathcal H}_\infty}$ in a neighborhood of $\overline{{\mathcal B}}(\mu_\ast,\eta_{\mu,0})$.

The boundedness of the absolute values of the third derivatives of $\widetilde{\sigma}^{{\mathcal V}_k, {\mathcal W}_k}(\cdot)$
uniformly by a constant independent of $\mu^{(1)}$ can be established in a similar way as in part \textbf{(i)}. 
Specifically, by differentiating \eqref{eq:reduced_sval_2derf} with respect to $\mu_\ell$,  
it can be seen that $\widetilde{\sigma}^{{\mathcal V}_k, {\mathcal W}_k}_{\mu_q  \mu_r \mu_\ell} (\mu)$ is 
a ratio, where the expression in the numerator is a sum of products of the mixed second derivatives 
$\sigma^{{\mathcal V}_k, {\mathcal W}_k}_{\chi_1, \chi_2} \big(\mu,\boldsymbol{\omega}^{{\mathcal V}_k, {\mathcal W}_k}(\mu)\big)$
and third derivatives $\sigma^{{\mathcal V}_k, {\mathcal W}_k}_{\chi_1, \chi_2 \chi_3} \big(\mu,\boldsymbol{\omega}^{{\mathcal V}_k, {\mathcal W}_k}(\mu)\big)$
for $\chi_1, \chi_2, \chi_3 \in \{\omega, \mu_q , \mu_r, \mu_\ell \}$, while the expression in the denominator is 
$\sigma^{{\mathcal V}_k, {\mathcal W}_k}_{\omega\omega} \big(\mu,\boldsymbol{\omega}^{{\mathcal V}_k, {\mathcal W}_k}(\mu)\big)^3$.
Hence, once again, the conclusion
\[
 \left| \widetilde{\sigma}^{{\mathcal V}_k, {\mathcal W}_k}_{\mu_q  \mu_r \mu_\ell} (\mu) \right| \leq \gamma 
	\quad \forall\, \mu \in \overline{{\mathcal B}}(\mu_\ast,\widehat{\eta}_{\mu,0}) 
\]
for $q,\, r,\, \ell = 1,\,\dots,\, \sd$ for some $\gamma$ independent of $\mu^{(1)}$ 
can be drawn from part \textbf{(ii)} of Proposition~\ref{thm:localrep_smoothness} and Lemma~\ref{thm:bound_hd}.
\end{proof}

By Lemma~\ref{thm:basic_int} and Lemma~\ref{thm:extended_int}, the reduced function 
$\widetilde{\sigma}^{{\mathcal V}_k, {\mathcal W}_k}(\cdot)$
Hermite interpolates the original $\Hinf$ function ${\| H[\cdot] \|}_{{\mathcal H}_\infty}$ at $\mu = \mu^{(k)},\, \mu^{(k-1)}$. 
Indeed, for the extended algorithm (Algorithm~\ref{alg2}), these Hermite interpolation properties also hold 
at $\mu = \mu^{(k,rq)}$ for each $r = 1,\,\dots,\, \sd, \, q = r,\, \dots,\, \sd$ by Lemma~\ref{thm:extended_int}.
From these observations, by also employing Lemma~\ref{thm:3der_bounded}, it is possible to conclude with 
an upper bound on the gap between the second derivatives of $\widetilde{\sigma}^{{\mathcal V}_k, {\mathcal W}_k}(\cdot)$ 
and $\| H[\cdot] \|_{{\mathcal H}_\infty}$ near $\mu_*$, which we formally state and prove next.
\begin{lemma}[Proximity of the second derivatives]\label{thm:acc_2der}
Suppose that the assumptions of Proposition~\ref{thm:localrep_smoothness} hold. Additionally, 
assume that $\nabla^2 {\| H[\mu_\ast] \|}_{{\mathcal H}_\infty}$ is invertible. 
Furthermore, let $\boldsymbol{\omega}^{{\mathcal V}_k, {\mathcal W}_k}(\cdot)$ be as 
in Proposition~\ref{thm:localrep_smoothness}, and 
$\widetilde{\sigma}^{{\mathcal V}_k, {\mathcal W}_k}(\cdot)$ be defined as in \eqref{eq:reduced_sfun}. 
Then there exists a $\zeta > 0$ such that the following statements hold for Algorithm~\ref{alg1} when $\sd =1$ 
and for Algorithm~\ref{alg2} independent of $\mu^{(1)}$: \vspace*{1ex}
\begin{enumerate}
	\item[\bf (i)]	We have $ \left\| \nabla^2 \big\| H\big[\mu^{(k)}\big] \big\|_{{\mathcal H}_\infty} 
						 -  
					\nabla^2 \widetilde{\sigma}^{{\mathcal V}_k, {\mathcal W}_k}\big(\mu^{(k)}\big) \right\|_2 
												\leq \zeta \big\| \mu^{(k)} - \mu^{(k-1)} \big\|_2$.\vspace*{1ex}
	\item[\bf (ii)] Both $\nabla^2 \big\| H\big[\mu^{(k)}\big] \big\|_{{\mathcal H}_\infty}$ and 
						$\nabla^2 \widetilde{\sigma}^{{\mathcal V}_k, {\mathcal W}_k}\big(\mu^{(k)}\big)$ are 
				invertible. \vspace*{1ex}
	\item[\bf (iii)] We have $ \left\| \left[ \nabla^2 \big\| H\big[\mu^{(k)}\big] \big\|_{{\mathcal H}_\infty} \right]^{-1} 
						-  
				\left[ \nabla^2 \widetilde{\sigma}^{{\mathcal V}_k, {\mathcal W}_k}\big(\mu^{(k)}\big) \right]^{-1} \right\|_2 
						\leq  \zeta \big\| \mu^{(k)} - \mu^{(k-1)} \big\|_2$.
\end{enumerate}
\end{lemma}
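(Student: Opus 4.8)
The plan is to exploit the Hermite interpolation properties (Lemmas~\ref{thm:basic_int} and~\ref{thm:extended_int}) together with the uniform boundedness of third derivatives (Lemma~\ref{thm:3der_bounded}) to control the discrepancy of the Hessians via a multivariate Taylor/finite-difference argument. Write $f(\mu) := \| H[\mu] \|_{{\mathcal H}_\infty}$ and $g(\mu) := \widetilde{\sigma}^{{\mathcal V}_k, {\mathcal W}_k}(\mu)$, and set $e := f - g$. By Proposition~\ref{thm:localrep_smoothness} both $f$ and $g$ are three times continuously differentiable on $\overline{\mathcal B}(\mu_\ast, \eta_{\mu,0})$, and by Lemma~\ref{thm:3der_bounded} the third derivatives of both functions are bounded in absolute value by a constant $\gamma$ independent of $\mu^{(1)}$; hence $e$ has uniformly bounded third derivatives. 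The crucial input is that $e$ and $\nabla e$ vanish at the interpolation nodes: for Algorithm~\ref{alg1} with $\sd = 1$ these nodes are $\mu^{(k)}$ and $\mu^{(k-1)}$, while for Algorithm~\ref{alg2} they are $\mu^{(k)}$ together with the perturbed points $\mu^{(k,rq)} = \mu^{(k)} + h^{(k)} e_{rq}$ for $r = 1,\dots,\sd$, $q = r,\dots,\sd$, and $h^{(k)} = \| \mu^{(k)} - \mu^{(k-1)} \|_2$.

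The first step is to record that $e(\mu^{(k)}) = 0$, $\nabla e(\mu^{(k)}) = 0$, and likewise at each other node; this is exactly parts \textbf{(i)}--\textbf{(iii)} of Lemma~\ref{thm:basic_int}/\ref{thm:extended_int} applied at $j = k$ and at the $(k,rq)$-nodes (the gradient identities require simplicity of the largest singular value, which holds near $\mu_\ast$ by Assumption~\ref{assume:smoothness} and the continuity/interpolation of $\sigma_2$). For the univariate case ($\sd = 1$) the Hessian $\nabla^2 e$ is a single scalar $e''$; using Taylor's theorem with integral remainder around $\mu^{(k)}$ and the vanishing of $e$ and $e'$ at both $\mu^{(k)}$ and $\mu^{(k-1)}$, one gets $0 = e(\mu^{(k-1)}) = \tfrac12 e''(\mu^{(k)}) (\mu^{(k-1)} - \mu^{(k)})^2 + R$ with $|R| \le C \gamma |\mu^{(k-1)} - \mu^{(k)}|^3$, whence $|e''(\mu^{(k)})| \le 2C\gamma |\mu^{(k)} - \mu^{(k-1)}|$; that is statement \textbf{(i)} with $\zeta = 2C\gamma$. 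For Algorithm~\ref{alg2} in the multivariate case one fixes a pair $(r,q)$ and looks along the segment from $\mu^{(k)}$ to $\mu^{(k,rq)}$: Taylor expansion of $e$ along the direction $e_{rq}$, using $e(\mu^{(k)}) = e(\mu^{(k,rq)}) = 0$ and $\nabla e(\mu^{(k)}) = 0$, yields $| e_{rq}^{\mathsf T} \nabla^2 e(\mu^{(k)}) e_{rq} | \le C\gamma\, h^{(k)}$; since the vectors $\{ e_{rq} : 1 \le r \le q \le \sd \}$ span the space of symmetric $\sd \times \sd$ matrices through the quadratic form $M \mapsto e_{rq}^{\mathsf T} M e_{rq}$ (polarization: $e_{rr}^{\mathsf T} M e_{rr} = M_{rr}$ and $2 e_{rq}^{\mathsf T} M e_{rq} = M_{rr} + M_{qq} + 2 M_{rq}$ recovers every off-diagonal entry), we obtain $\| \nabla^2 e(\mu^{(k)}) \|_2 \le C' \gamma\, h^{(k)} = C'\gamma \| \mu^{(k)} - \mu^{(k-1)} \|_2$, which is \textbf{(i)}.

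For step \textbf{(ii)}: by hypothesis $\nabla^2 f(\mu_\ast) = \nabla^2 \| H[\mu_\ast] \|_{{\mathcal H}_\infty}$ is invertible, and since $\mu^{(k)} \to \mu_\ast$ as $\mu^{(1)} \to \mu_\ast$ (shown in the proof of Proposition~\ref{thm:localrep_smoothness}) and $\nabla^2 f$ is continuous, $\nabla^2 f(\mu^{(k)})$ is invertible with $\| [\nabla^2 f(\mu^{(k)})]^{-1} \|_2$ bounded by some $\rho$ independent of $\mu^{(1)}$, provided $\| \mu^{(1)} - \mu_\ast \|_2$ is small enough. Combining this with \textbf{(i)} and shrinking the neighborhood further so that $\zeta \| \mu^{(k)} - \mu^{(k-1)} \|_2 \le 1/(2\rho)$, a standard Neumann-series / perturbation-of-inverses argument gives invertibility of $\nabla^2 g(\mu^{(k)})$ with $\| [\nabla^2 g(\mu^{(k)})]^{-1} \|_2 \le 2\rho$. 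Finally \textbf{(iii)} follows from the identity $X^{-1} - Y^{-1} = X^{-1}(Y - X) Y^{-1}$ applied to $X = \nabla^2 f(\mu^{(k)})$, $Y = \nabla^2 g(\mu^{(k)})$, bounding the two inverses by $\rho$ and $2\rho$ and the difference by \textbf{(i)}, so that $\| [\nabla^2 f(\mu^{(k)})]^{-1} - [\nabla^2 g(\mu^{(k)})]^{-1} \|_2 \le 2\rho^2 \zeta \| \mu^{(k)} - \mu^{(k-1)} \|_2$; absorbing constants into a new $\zeta$ gives the claim.

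The main obstacle is bookkeeping the uniformity in $\mu^{(1)}$ throughout, and in particular making the multivariate finite-difference argument clean: one must check that the span/polarization argument over the directions $e_{rq}$ really converts the $\binom{\sd+1}{2}$ scalar bounds into a bound on the full Hessian matrix norm with a constant depending only on $\sd$, and that the Taylor remainder along each segment is controlled uniformly (this uses that all the perturbed nodes $\mu^{(k,rq)}$ lie in $\overline{\mathcal B}(\mu_\ast, \widehat\eta_{\mu,0})$ once $\mu^{(1)}$ is close enough to $\mu_\ast$, since $h^{(k)} = \| \mu^{(k)} - \mu^{(k-1)} \|_2 \to 0$). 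The rest is routine perturbation theory for matrix inverses.
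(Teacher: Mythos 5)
Your proposal is correct and takes essentially the same route as the paper: Taylor expansion along the segment from $\mu^{(k)}$ to $\mu^{(k-1)}$ (for $\sd=1$), respectively to the nodes $\mu^{(k,rq)}$ (Algorithm~\ref{alg2}), using the Hermite interpolation properties together with the uniform third-derivative bounds of Lemma~\ref{thm:3der_bounded}, then extraction of all Hessian entries from the directions $e_{rq}$, a limit/continuity argument for invertibility, and a standard perturbation bound for the inverses (the paper invokes adjugate formulas where you use the identity $X^{-1}-Y^{-1}=X^{-1}(Y-X)Y^{-1}$, an immaterial difference). The only step worth spelling out, which the paper does explicitly, is that $\widetilde{\sigma}^{{\mathcal V}_k,{\mathcal W}_k}\big(\mu^{(k,rq)}\big)=\big\|H\big[\mu^{(k,rq)}\big]\big\|_{{\mathcal H}_\infty}$ (and likewise the gradient match at $\mu^{(k)}$) requires the identification $\boldsymbol{\omega}^{{\mathcal V}_k,{\mathcal W}_k}\big(\mu^{(k,rq)}\big)=\omega^{(k,rq)}$, obtained from the interpolated $\omega$-stationarity in Lemma~\ref{thm:extended_int} together with the uniqueness of the stationary point in part \textbf{(iv)} of Proposition~\ref{thm:localrep_smoothness}.
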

\begin{proof}
\textbf{(i)} We focus on Algorithm~\ref{alg2} only (the proof for Algorithm~\ref{alg1} with $\sd = 1$ proceeds similarly by defining $h^{(k)} := \mu^{(k-1)} - \mu^{(k)}$).
By parts \textbf{(iii)} and \textbf{(iv)} of Proposition~\ref{thm:localrep_smoothness}, the functions ${\| H[\cdot] \|}_{{\mathcal H}_\infty}$ and $\widetilde{\sigma}^{{\mathcal V}_k, {\mathcal W}_k}(\cdot)$ are three times 
differentiable in the interior of $\overline{{\mathcal B}}(\mu_\ast,\eta_{\mu,0})$ independent of $\mu^{(1)}$. 
Now choose $\mu^{(1)}$ close enough to $\mu_\ast$ so that 
$\overline{{\mathcal B}}\big(\mu^{(k)}, h^{(k)}\big) \subset \overline{{\mathcal B}}(\mu_\ast,\eta_{\mu,0})$,
as well as $\omega^{(k)}, \omega^{(k,rq)}$ belong to the interior of $\overline{{\mathcal B}}(\omega_\ast,\eta_{\omega,0})$ for $r = 1, \,\dots,\, \sd$ and
$q = r,\, \dots,\, \sd$ (observe that $\omega^{(k,rq)} \rightarrow \omega_\ast$ as $\mu^{(1)} \rightarrow \mu_\ast$ 
based on arguments similar to the ones for $\omega^{(k)} \rightarrow \omega_\ast$ as $\mu^{(1)} \rightarrow \mu_\ast$ 
given in the proof of part \textbf{(ii)} of Proposition~\ref{thm:localrep_smoothness}).

It follows that the functions
\begin{align*}
	\ell : [0,1] \rightarrow {\mathbb R}, \quad &
	\ell(\alpha) :=  \big\| H\big(\mu^{(k)} + \alpha h^{(k)} e_{rq}\big) \big\|_{{\mathcal H}_\infty}, \\
	\widetilde{\ell} : [0,1] \rightarrow {\mathbb R}, \quad &
	\widetilde{\ell}(\alpha) := \widetilde{\sigma}^{{\mathcal V}_k, {\mathcal W}_k}\big(\mu^{(k)} + \alpha h^{(k)} e_{rq}\big)
\end{align*}
are continuous and three times differentiable in $(0,1)$. Additionally, Lemma~\ref{thm:extended_int} implies that the following interpolation properties between these functions
\begin{equation}\label{eq:interpol_prop}
	\ell(0) = \widetilde{\ell}(0), \quad \ell'(0) = \widetilde{\ell}'(0) \quad \text{and} \quad \ell(1) = \widetilde{\ell}(1)
\end{equation}
are satisfied. 
To see the last equality at $\alpha = 1$, we observe
$$
	0  =  \sigma_\omega \big(\mu^{(k,rq)}, \omega^{(k,rq)}\big) =  \sigma^{{\mathcal V}_k, {\mathcal W}_k}_\omega \big(\mu^{(k,rq)}, \omega^{(k,rq)}\big)
$$
by Lemma~\ref{thm:extended_int}, so part \textbf{(iv)} of Proposition~\ref{thm:localrep_smoothness}, in particular the uniqueness 
of the stationary point $\boldsymbol{\omega}^{{\mathcal V}_k, {\mathcal W}_k}(\mu)$ of
$\sigma^{{\mathcal V}_k, {\mathcal W}_k}(\mu, \cdot)$ for all $\mu \in \overline{{\mathcal B}}(\mu_\ast,\eta_{\mu,0})$, implies
$\boldsymbol{\omega}^{{\mathcal V}_k, {\mathcal W}_k}\big(\mu^{(k,rq)}\big) = \omega^{(k,rq)}$ (as  
$\mu^{(k,rq)} \in \overline{{\mathcal B}}(\mu_\ast,\eta_{\mu,0})$ and $\omega^{(k,rq)} \in \overline{{\mathcal B}}(\omega_\ast,\eta_{\omega,0})$).
Hence, again by Lemma~\ref{thm:extended_int}, we have
\begin{align*}
	\ell(1)	= 
	\big\| H\big[\mu^{(k,pq)}\big] \big\|_{{\mathcal H}_\infty} & = \sigma^{{\mathcal V}_k, {\mathcal W}_k}\big(\mu^{(k,rq)}, \omega^{(k,rq)}\big) \\
		& = \sigma^{{\mathcal V}_k, {\mathcal W}_k}\big(\mu^{(k,rq)}, \boldsymbol{\omega}^{{\mathcal V}_k, {\mathcal W}_k}\big(\mu^{(k,rq)}\big)\big) = \widetilde{\ell}(1).
\end{align*}
By employing the interpolation properties in \eqref{eq:interpol_prop} in the Taylor expansions
\begin{align*}
	\ell(1) &= \ell(0) + \ell'(0) + \frac{1}{2} \ell''(0) + \frac{1}{6} \ell'''(\varepsilon), \\
	\widetilde{\ell}(1)	&= \widetilde{\ell}(0) + \widetilde{\ell}'(0) + \frac{1}{2} \widetilde{\ell}''(0) + \frac{1}{6} \widetilde{\ell}'''(\widetilde{\varepsilon})	
\end{align*}
for some $\varepsilon, \widetilde{\varepsilon} \in (0,1)$, we obtain
\begin{equation}\label{eq:linefuns_2der}
\begin{split}
	 \big[ h^{(k)} \big]^2 
	 e_{rq}^\mathsf{T} \left[ \nabla^2 \big\| H\big[\mu^{(k)}\big] \big\|_{{\mathcal H}_\infty}  - 
	 				\nabla^2  \widetilde{\sigma}^{{\mathcal V}_k, {\mathcal W}_k}\big(\mu^{(k)}\big) \right] e_{rq} \hskip 25ex \\
	= \ell''(0) - \widetilde{\ell}''(0) = 
				\frac{1}{3} \left( \widetilde{\ell}'''(\widetilde{\varepsilon}) - \ell'''(\varepsilon) \right) =  \mathcal{O}\left(\big[ h^{(k)} \big]^3\right),
\end{split}				
\end{equation}
where the constant hidden in the Landau symbol $\mathcal{O}$ is independent of $\mu^{(1)}$ due to Lemma~\ref{thm:3der_bounded}.
By considering particular values of $r = 1,\,\dots,\, \sd$ and $q = r,\,\dots,\, \sd$ in \eqref{eq:linefuns_2der},
we deduce
\begin{equation*} 
	\left|
	\frac{ \partial^2  \big\| H\big[\mu^{(k)}\big] \big\|_{{\mathcal H}_\infty} }{ \partial \mu_r \partial \mu_q }
				-
	\frac{\partial^2    \widetilde{\sigma}^{{\mathcal V}_k, {\mathcal W}_k}\big(\mu^{(k)}\big)} { \partial \mu_r \partial \mu_q }
	\right|
				= 
			\mathcal{O}\big(h^{(k)}\big).
\end{equation*}
Once again, the constant hidden in the Landau symbol $\mathcal{O}$ does not depend on $\mu^{(1)}$ in the latter equation.

\textbf{(ii)} By the continuity of $\nabla^2 {\| H[\cdot] \|}_{{\mathcal H}_\infty}$ in the interior of $\overline{{\mathcal B}}(\mu_\ast,\eta_{\mu,0})$, coupled 
with the assumption $\mu^{(k)} \rightarrow \mu_\ast$ as $\mu^{(1)} \rightarrow \mu_\ast$, we have 
$\lim_{\mu^{(1)} \rightarrow \mu_\ast} \nabla^2  \big\| H\big[\mu^{(k)}\big] \big\|_{{\mathcal H}_\infty}  = \nabla^2 {\| H[\mu_\ast] \|}_{{\mathcal H}_\infty}$. 
Consequently, $\nabla^2  \big\| H\big[\mu^{(k)}\big] \big\|_{{\mathcal H}_\infty}$ is invertible provided $\mu^{(1)}$ is sufficiently close to $\mu_\ast$.
In addition, from part~\textbf{(i)} we get
\[
	\nabla^2 {\| H[\mu_\ast] \|}_{{\mathcal H}_\infty}	=
	\lim_{\mu^{(1)} \rightarrow \mu_\ast } \nabla^2  \big\| H\big[\mu^{(k)}\big] \big\|_{{\mathcal H}_\infty}	=	
		\lim_{\mu^{(1)} \rightarrow \mu_\ast}  \nabla^2  \widetilde{\sigma}^{{\mathcal V}_k, {\mathcal W}_k}\big(\mu^{(k)}\big),
\]
implying also the invertibility of $\nabla^2  \widetilde{\sigma}^{{\mathcal V}_k, {\mathcal W}_k}\big(\mu^{(k)}\big)$ for $\mu^{(1)}$ close to $\mu_\ast$.

\textbf{(iii)} This statement follows from part \textbf{(i)} by employing the adjugate formulas for the inverses of 
$\nabla^2 \big\| H\big[\mu^{(k)}\big] \big\|_{{\mathcal H}_\infty}$ as well as 
$\nabla^2  \widetilde{\sigma}^{{\mathcal V}_k, {\mathcal W}_k}\big(\mu^{(k)}\big)$. For details, 
we refer to \cite[Lemma~2.8, part (ii)]{Kangal2015}.
\end{proof}

Now we are ready for the main rate-of-convergence result.
\begin{theorem}[Local superlinear convergence]\label{thm:super_converge}
Suppose that the assumptions of Proposition~\ref{thm:localrep_smoothness} hold. 
In particular, let $\boldsymbol{\omega}^{{\mathcal V}_k, {\mathcal W}_k}(\cdot)$ be as in 
Proposition~\ref{thm:localrep_smoothness}.  
Additionally, assume that the matrix $\nabla^2 {\| H[\mu_\ast] \|}_{{\mathcal H}_\infty}$ is invertible,
the point $\mu_\ast$ is strictly in the interior of $\underline{\Omega}$, 
and that the function 
$\sigma^{{\mathcal V}_k, {\mathcal W}_k}(\mu^{(k+1)}, \cdot)$ has a unique global maximizer,
say at $\widetilde{\omega}^{(k+1)}$,
\noindent
with $\lim_{\mu^{(1)} \rightarrow \mu_\ast} \widetilde{\omega}^{(k+1)} = \omega_\ast$. 
Regarding Algorithm~\ref{alg1} when $\sd = 1$ and Algorithm~\ref{alg2}, the following statement holds:
There exists a $C > 0$ independent of $\mu^{(1)}$ such that
\begin{equation}\label{eq:super_decay}
	\frac{ \big\| \mu^{(k+1)}  -  \mu_\ast \big\|_2}{ \big\|\mu^{(k)} - \mu_\ast \big\|_2 \max \big\{ \big\| \mu^{(k)} - \mu_\ast \big\|_2, \big\| \mu^{(k-1)} - \mu_\ast \big\|_2 \big\}} \leq  C.
\end{equation}
\end{theorem}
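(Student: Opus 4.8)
The plan is to reduce \eqref{eq:super_decay} to a perturbed Newton-type estimate for the first-order optimality conditions of the reduced and the full objective. Write $F(\mu):=\|H[\mu]\|_{\mathcal{H}_\infty}$; by Proposition~\ref{thm:localrep_smoothness}(iii)--(iv) both $F$ and $\widetilde{\sigma}^{\mathcal{V}_k,\mathcal{W}_k}(\cdot)$ are three times continuously differentiable on a fixed neighbourhood of $\mu_\ast$ once $\|\mu^{(1)}-\mu_\ast\|_2$ is small enough, with $F(\mu)=\sigma(\mu,\boldsymbol{\omega}(\mu))$ there, and since $\mu_\ast$ is an interior local minimizer of $F$ we get $\nabla F(\mu_\ast)=0$. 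The first substantive step is to establish $\nabla\widetilde{\sigma}^{\mathcal{V}_k,\mathcal{W}_k}\big(\mu^{(k+1)}\big)=0$: by hypothesis $\sigma^{\mathcal{V}_k,\mathcal{W}_k}\big(\mu^{(k+1)},\cdot\big)$ has a unique global maximizer $\widetilde{\omega}^{(k+1)}\to\omega_\ast$, which by Proposition~\ref{thm:localrep_smoothness}(iv) coincides with $\boldsymbol{\omega}^{\mathcal{V}_k,\mathcal{W}_k}\big(\mu^{(k+1)}\big)$ for $\mu^{(1)}$ close to $\mu_\ast$; it is a simple singular value (its gap to $\sigma_2^{\mathcal{V}_k,\mathcal{W}_k}$ is at least $\varepsilon$ by the same part), so $\mu\mapsto\|H^{\mathcal{V}_k,\mathcal{W}_k}[\mu]\|_{\mathcal{H}_\infty}$ is differentiable at the interior point $\mu^{(k+1)}$ with gradient $\sigma^{\mathcal{V}_k,\mathcal{W}_k}_\mu\big(\mu^{(k+1)},\widetilde{\omega}^{(k+1)}\big)$ (an envelope argument, using also $\sigma^{\mathcal{V}_k,\mathcal{W}_k}_\omega\big(\mu^{(k+1)},\widetilde{\omega}^{(k+1)}\big)=0$); first-order optimality of $\mu^{(k+1)}$ for the reduced problem forces this gradient to vanish, and since $\nabla\widetilde{\sigma}^{\mathcal{V}_k,\mathcal{W}_k}(\mu)=\sigma^{\mathcal{V}_k,\mathcal{W}_k}_\mu\big(\mu,\boldsymbol{\omega}^{\mathcal{V}_k,\mathcal{W}_k}(\mu)\big)$, the claim follows.

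Next I would introduce the gradient defect $g_k:=\nabla\widetilde{\sigma}^{\mathcal{V}_k,\mathcal{W}_k}-\nabla F$ and record three facts. First, the Hermite interpolation noted just before Lemma~\ref{thm:acc_2der} (from Lemma~\ref{thm:basic_int}(iii)/Lemma~\ref{thm:extended_int}(ii)) gives $g_k\big(\mu^{(k)}\big)=0$. Second, Lemma~\ref{thm:acc_2der}(i) gives $\big\|\nabla g_k\big(\mu^{(k)}\big)\big\|_2\le\zeta\big\|\mu^{(k)}-\mu^{(k-1)}\big\|_2$. Third, Lemma~\ref{thm:3der_bounded} together with the smoothness of $F$ bounds the second derivatives of $g_k$ by a constant independent of $\mu^{(1)}$ on a fixed neighbourhood of $\mu_\ast$. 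A second-order Taylor expansion of $g_k$ about $\mu^{(k)}$, evaluated at $\mu^{(k+1)}$, then yields
\[
   \big\|g_k\big(\mu^{(k+1)}\big)\big\|_2 \;\le\; \zeta\,\big\|\mu^{(k)}-\mu^{(k-1)}\big\|_2\,\big\|\mu^{(k+1)}-\mu^{(k)}\big\|_2 \;+\; C'\,\big\|\mu^{(k+1)}-\mu^{(k)}\big\|_2^{\,2}
\]
with $C'$ independent of $\mu^{(1)}$; the argument applies verbatim to Algorithm~\ref{alg1} with $\sd=1$, since Lemma~\ref{thm:acc_2der} covers that case too.

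To finish, I would combine this with the mean-value identity
\[
   \Big(\textstyle\int_0^1 \nabla^2 F\big(\mu_\ast+t(\mu^{(k+1)}-\mu_\ast)\big)\,\mathrm{d}t\Big)\big(\mu^{(k+1)}-\mu_\ast\big)=\nabla F\big(\mu^{(k+1)}\big)-\nabla F(\mu_\ast)=-g_k\big(\mu^{(k+1)}\big),
\]
using $\nabla F(\mu_\ast)=0$ and $\nabla\widetilde{\sigma}^{\mathcal{V}_k,\mathcal{W}_k}\big(\mu^{(k+1)}\big)=0$. Because $\mu^{(k+1)}\to\mu_\ast$ and $\nabla^2 F(\mu_\ast)$ is invertible, the averaged Hessian is invertible with uniformly bounded inverse for $\mu^{(1)}$ close enough, giving $\big\|\mu^{(k+1)}-\mu_\ast\big\|_2\le\kappa\big\|g_k\big(\mu^{(k+1)}\big)\big\|_2$ for a $\kappa$ independent of $\mu^{(1)}$. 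Inserting $\big\|\mu^{(k)}-\mu^{(k-1)}\big\|_2\le 2\max\big\{\big\|\mu^{(k)}-\mu_\ast\big\|_2,\big\|\mu^{(k-1)}-\mu_\ast\big\|_2\big\}$ and $\big\|\mu^{(k+1)}-\mu^{(k)}\big\|_2\le\big\|\mu^{(k+1)}-\mu_\ast\big\|_2+\big\|\mu^{(k)}-\mu_\ast\big\|_2$ into the bound above, and exploiting that $\big\|\mu^{(k+1)}-\mu_\ast\big\|_2$ and $\max\big\{\big\|\mu^{(k)}-\mu_\ast\big\|_2,\big\|\mu^{(k-1)}-\mu_\ast\big\|_2\big\}$ are as small as we wish, the terms carrying a factor $\big\|\mu^{(k+1)}-\mu_\ast\big\|_2$ on the right are absorbed into the left-hand side, which yields \eqref{eq:super_decay} with $C$ independent of $\mu^{(1)}$.

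The hard part will be the first step: turning ``$\mu^{(k+1)}$ minimizes the reduced $\mathcal{H}_\infty$-norm over $\underline{\Omega}$'' into ``$\nabla\widetilde{\sigma}^{\mathcal{V}_k,\mathcal{W}_k}\big(\mu^{(k+1)}\big)=0$''. This needs that, near $\mu_\ast$, the reduced $\mathcal{H}_\infty$-norm is realized by the smooth branch $\boldsymbol{\omega}^{\mathcal{V}_k,\mathcal{W}_k}(\cdot)$ of the largest reduced singular value and that this singular value stays simple, so that the reduced objective is differentiable at the interior point $\mu^{(k+1)}$ with the envelope-type gradient; this is precisely where the added hypothesis on $\widetilde{\omega}^{(k+1)}$ and Proposition~\ref{thm:localrep_smoothness}(iv) enter. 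Everything afterwards is standard secant-type bookkeeping, the only delicate point being the self-referential $\big\|\mu^{(k+1)}-\mu_\ast\big\|_2$ term, handled by the usual absorption argument.
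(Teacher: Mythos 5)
Your proposal is correct, and it rests on the same pillars as the paper's own argument: the identification $\boldsymbol{\omega}^{{\mathcal V}_k,{\mathcal W}_k}\big(\mu^{(k+1)}\big)=\widetilde{\omega}^{(k+1)}$ via part \textbf{(iv)} of Proposition~\ref{thm:localrep_smoothness} to conclude $\nabla\widetilde{\sigma}^{{\mathcal V}_k,{\mathcal W}_k}\big(\mu^{(k+1)}\big)=0$, the gradient interpolation at $\mu^{(k)}$, the ${\mathcal O}\big(h^{(k)}\big)$ Hessian proximity of Lemma~\ref{thm:acc_2der}, and the uniform third-derivative bounds of Lemma~\ref{thm:3der_bounded}. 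Where you differ is the bookkeeping. The paper expands $\nabla\|H[\cdot]\|_{{\mathcal H}_\infty}$ about $\mu^{(k)}$ with integral remainder and assembles the identity \eqref{eq:intermed} out of \emph{inverse} Hessians; this forces it to invoke parts \textbf{(ii)} and \textbf{(iii)} of Lemma~\ref{thm:acc_2der} (invertibility of $\nabla^2\widetilde{\sigma}^{{\mathcal V}_k,{\mathcal W}_k}\big(\mu^{(k)}\big)$ and the ${\mathcal O}\big(h^{(k)}\big)$ bound on the difference of inverses) and an extra Taylor expansion of $\nabla\widetilde{\sigma}^{{\mathcal V}_k,{\mathcal W}_k}$ to turn the reduced Newton step into $\mu^{(k)}-\mu^{(k+1)}$. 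You instead work with the gradient defect $g_k=\nabla\widetilde{\sigma}^{{\mathcal V}_k,{\mathcal W}_k}-\nabla\|H[\cdot]\|_{{\mathcal H}_\infty}$, which vanishes at $\mu^{(k)}$ and has derivative of size ${\mathcal O}\big(h^{(k)}\big)$ there by part \textbf{(i)} alone, and you only ever invert the averaged full Hessian along $[\mu_\ast,\mu^{(k+1)}]$; this bypasses parts \textbf{(ii)}--\textbf{(iii)} of Lemma~\ref{thm:acc_2der} and concentrates the interpolation information in a single Taylor estimate, at the cost of an explicit absorption step for the self-referential $\|\mu^{(k+1)}-\mu_\ast\|_2$ terms. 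That absorption is in fact also implicitly present in the paper, since its remainder ${\mathcal O}\big(\|\mu^{(k+1)}-\mu^{(k)}\|_2^2\big)$ in \eqref{eq1} carries $\|\mu^{(k+1)}-\mu_\ast\|_2$ as well; you merely make it visible. The step you flagged as delicate, namely passing from optimality of $\mu^{(k+1)}$ for the reduced problem to $\nabla\widetilde{\sigma}^{{\mathcal V}_k,{\mathcal W}_k}\big(\mu^{(k+1)}\big)=0$, is handled in the paper exactly as you sketch it (uniqueness of $\widetilde{\omega}^{(k+1)}$, the gap and stationarity assertions of part \textbf{(iv)}, and interiority of $\mu^{(k+1)}$ for $\mu^{(1)}$ close to $\mu_\ast$), so there is no gap there.
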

\begin{proof}
By Proposition~\ref{thm:localrep_smoothness}, both ${\| H[\cdot] \|}_{{\mathcal H}_\infty}$ and 
$\widetilde{\sigma}^{{\mathcal V}_k, {\mathcal W}_k}(\cdot)$ defined by \eqref{eq:reduced_sfun}
are twice Lipschitz continuously differentiable in the interior of the ball $\overline{{\mathcal B}}(\mu_\ast, \eta_{\mu,0})$. 
Now suppose $\mu^{(1)}$ is close enough to $\mu_\ast $ so that 
$\mu^{(k+1)},\, \mu^{(k)},\, \mu^{(k-1)}$ lie in the interior of $\overline{{\mathcal B}}(\mu_\ast, \eta_{\mu,0})$, whereas
$\widetilde{\omega}^{(k+1)}$ belongs to the interior of $\overline{{\mathcal B}}(\omega_\ast, \eta_{\omega,0})$, and
\begin{enumerate}
  \item[\bf (1)] $\overline{{\mathcal B}}\big(\mu^{(k)}, h^{(k)}\big) \subset \overline{{\mathcal B}}(\mu_\ast,\eta_{\mu,0})$ where $h^{(k)} := \big\| \mu^{(k)} - \mu^{(k-1)} \big\|_2$
  (recall that $\mu^{(k)} \rightarrow \mu_\ast$ and $h^{(k)} \rightarrow 0$ as $\mu^{(1)} \rightarrow \mu_\ast$),
  \item[\bf (2)] $\nabla^2 \big\| H\big[\mu^{(k)}\big] \big\|_{{\mathcal H}_\infty}$ and $\nabla^2  \widetilde{\sigma}^{{\mathcal V}_k, {\mathcal W}_k}\big(\mu^{(k)}\big)$ are invertible
  (part \textbf{(ii)} of Lemma~\ref{thm:acc_2der} ensures this as $\mu^{(1)}$ is chosen close to $\mu_\ast$).
\end{enumerate}

\medskip

By an application of Taylor's theorem with integral remainder we obtain 
\begin{equation*}
	0 = \nabla {\| H[\mu_\ast] \|}_{{\mathcal H}_\infty} = 
					\nabla \big\| H\big[\mu^{(k)}\big] \big\|_{{\mathcal H}_\infty}  + \int_0^1	\nabla^2 \big\| H\big[\mu^{(k)} + t \big(\mu_\ast - \mu^{(k)}\big)\big] \big\|_{{\mathcal H}_\infty}  \big(\mu_\ast - \mu^{(k)}\big) \mathrm{d}t,
\end{equation*}
which implies
\begin{equation}\label{eq:Taylor_beg}
\begin{split}
 0	= \left[ \nabla^2 \big\| H\big[\mu^{(k)}\big] \big\|_{{\mathcal H}_\infty} \right]^{-1} \nabla \big\| H\big[\mu^{(k)}\big] \big\|_{{\mathcal H}_\infty}   + 	 \big(\mu_\ast - \mu^{(k)} \big)  +  
				  \left[ \nabla^2 \big\| H\big[\mu^{(k)}\big] \big\|_{{\mathcal H}_\infty} \right]^{-1}
							\\ 
	\times \int_0^1	\left[ \nabla^2 \big\| H\big[\mu^{(k)} + t \big(\mu_\ast - \mu^{(k)}\big)\big] 
		\big\|_{{\mathcal H}_\infty}   -   \nabla^2 \big\| H\big[\mu^{(k)}\big] \big\|_{{\mathcal H}_\infty} \right]\big(\mu_\ast - \mu^{(k)}\big) \mathrm{d}t.
\end{split}							
\end{equation}
Now by exploiting the interpolation property, in particular part \textbf{(iv)} of Lemma~\ref{thm:basic_int}, 
and recalling $\boldsymbol{\omega}^{{\mathcal V}_k, {\mathcal W}_k}\big(\mu^{(k)}\big) = \omega^{(k)}$
due to part \textbf{(ii)} of Proposition~\ref{thm:localrep_smoothness}, we get
\begin{equation*} 
	\begin{split}
	\nabla \| H(\mu^{(k)}) \|_{{\mathcal H}_\infty} & =  \sigma^{ {\mathcal V}_k, {\mathcal W}_k }_\mu (\mu^{(k)},\omega^{(k)}) \\
						& =  \sigma^{ {\mathcal V}_k, {\mathcal W}_k }_\mu (\mu^{(k)}, \boldsymbol{\omega}^{{\mathcal V}_k, {\mathcal W}_k}\big(\mu^{(k)}\big)) \\
						& =  \sigma^{ {\mathcal V}_k, {\mathcal W}_k }_\mu (\mu^{(k)}, \boldsymbol{\omega}^{{\mathcal V}_k, {\mathcal W}_k}\big(\mu^{(k)}\big))
					+ \sigma^{ {\mathcal V}_k, {\mathcal W}_k }_\omega (\mu^{(k)}, \boldsymbol{\omega}^{{\mathcal V}_k, {\mathcal W}_k}\big(\mu^{(k)}\big))
						\nabla \boldsymbol{\omega}^{{\mathcal V}_k, {\mathcal W}_k}\big(\mu^{(k)}\big) \\
						& =  \nabla \widetilde{\sigma}^{ {\mathcal V}_k, {\mathcal W}_k } (\mu^{(k)}),
	\end{split}
\end{equation*}
where we employ 
$\sigma^{ {\mathcal V}_k, {\mathcal W}_k }_\omega (\mu^{(k)}, \boldsymbol{\omega}^{{\mathcal V}_k, {\mathcal W}_k}\big(\mu^{(k)}\big)) = 0$
for the third equality. Hence, equation \eqref{eq:Taylor_beg} can be rearranged as
\begin{equation}\label{eq:intermed}
\begin{split}
	0  & = \left[ \nabla^2 \widetilde{\sigma}^{{\mathcal V}_k, {\mathcal W}_k}\big(\mu^{(k)}\big) \right]^{-1} 
	\nabla \widetilde{\sigma}^{ {\mathcal V}_k, {\mathcal W}_k } \big(\mu^{(k)}\big)	+  
						\big(\mu_\ast - \mu^{(k)} \big)  +     \\ 
		& \hskip 5ex \left\{
		\left[ \nabla^2 \big\| H\big[\mu^{(k)}\big] \big\|_{{\mathcal H}_\infty} \right]^{-1}  - 
		\left[ \nabla^2 \widetilde{\sigma}^{{\mathcal V}_k, {\mathcal W}_k}\big(\mu^{(k)}\big) \right]^{-1}
		\right\}  \nabla \big\| H\big[\mu^{(k)}\big] \big\|_{{\mathcal H}_\infty} +  \\
				 & 	\left[ \nabla^2 \big\| H\big[\mu^{(k)}\big] \big\|_{{\mathcal H}_\infty} \right]^{-1} \\
		& \hskip 6ex \times \int_0^1	\left[ \nabla^2 \big\| H\big[\mu^{(k)} + t \big(\mu_\ast - \mu^{(k)}\big)\big] \big\|_{{\mathcal H}_\infty}   -   \nabla^2 \big\| H\big[\mu^{(k)}\big] \big\|_{{\mathcal H}_\infty} \right] \big(\mu_\ast - \mu^{(k)}\big)  \mathrm{d}t.
\end{split}					
\end{equation}

Throughout the rest of the proof, by manipulating \eqref{eq:intermed}, we bound $\| \mu^{(k+1)} - \mu_\ast \|_2$ from above in terms of 
$\| \mu^{(k)} - \mu_\ast \|_2$ and $\| \mu^{(k-1)} - \mu_\ast \|_2$.
Since $\widetilde{\omega}^{(k+1)} \in \overline{{\mathcal B}}(\omega_\ast,\eta_{\omega,0})$ is assumed to be the unique global maximizer
of ${\sigma}^{{\mathcal V}_k, {\mathcal W}_k}\big(\mu^{(k+1)}, \cdot\big)$, we must have 
$\boldsymbol{\omega}^{{\mathcal V}_k, {\mathcal W}_k}\big(\mu^{(k+1)}\big)$ $= \widetilde{\omega}^{(k+1)}$ by part \textbf{(iv)} 
of Proposition~\ref{thm:localrep_smoothness}. It follows that
\begin{equation*}
\nabla \widetilde{\sigma}^{ {\mathcal V}_k, {\mathcal W}_k } \big(\mu^{(k+1)}\big)  = \sigma^{ {\mathcal V}_k, {\mathcal W}_k }_\mu (\mu^{(k+1)},\widetilde{\omega}^{(k+1)}) = 
			\nabla \big\| H^{{\mathcal V}_{k}, {\mathcal W}_{k}} [\mu^{(k+1)}] \big\|_{{\mathcal H}_\infty} = 0,
\end{equation*}
where we use the fact that $\mu^{(k+1)}$ is a maximizer of $\left\| H^{{\mathcal V}_{k}, {\mathcal W}_{k}} [\cdot] \right\|_{{\mathcal H}_\infty}$
for the last equality. Moreover, a Taylor expansion yields
\begin{align*}
	0 &= \nabla \widetilde{\sigma}^{ {\mathcal V}_k, {\mathcal W}_k } \big(\mu^{(k+1)}\big)  \\
		  & =
		\nabla \widetilde{\sigma}^{ {\mathcal V}_k, {\mathcal W}_k }  \big(\mu^{(k)}\big)   + 
		\nabla^2 \widetilde{\sigma}^{ {\mathcal V}_k, {\mathcal W}_k } \big(\mu^{(k)}\big) \big(\mu^{(k+1)} - \mu^{(k)}\big)  + 
		\mathcal{O}\left( \big\| \mu^{(k+1)} - \mu^{(k)} \big\|_2^2 \right), 
\end{align*}
which in turn implies 
\begin{equation}\label{eq1}
 \left[ \nabla^2 \widetilde{\sigma}^{{\mathcal V}_k, {\mathcal W}_k}\big(\mu^{(k)}\big) \right]^{-1} 
	\nabla \widetilde{\sigma}^{ {\mathcal V}_k, {\mathcal W}_k } \big(\mu^{(k)}\big) = \big(\mu^{(k)} - \mu^{(k+1)}\big)  + \mathcal{O}\left( \big\| \mu^{(k+1)} - \mu^{(k)} \big\|_2^2 \right).
\end{equation}
%
%
%
Additionally, by another Taylor expansion,
\begin{align*}
 0 &=  \nabla \big\| H\big[\mu_*\big] \big\|_{{\mathcal H}_\infty}  \\ &=  \nabla \big\| H\big[\mu^{(k)}\big] \big\|_{{\mathcal H}_\infty}  +  \nabla^2 \big\| H\big[\mu^{(k)}\big] \big\|_{{\mathcal H}_\infty}  \big( \mu_* - \mu^{(k)} \big) + \mathcal{O}\left( \big\| \mu^{(k)} - \mu_* \big\|_2^2 \right).
\end{align*}
Therefore, by using Lemma~\ref{thm:acc_2der} and part \textbf{(i)} of Lemma~\ref{thm:3der_bounded}, we see that
\begin{equation} \label{eq2}
\begin{split}
& \left\|\left\{
		\left[ \nabla^2 \big\| H\big[\mu^{(k)}\big] \big\|_{{\mathcal H}_\infty} \right]^{-1}  - 
		\left[ \nabla^2 \widetilde{\sigma}^{{\mathcal V}_k, {\mathcal W}_k}\big(\mu^{(k)}\big) \right]^{-1}
		\right\} \cdot \nabla \big\| H\big[\mu^{(k)}\big] \big\|_{{\mathcal H}_\infty} \right\|_2 \\ 
& \quad \le \zeta \big\| \mu^{(k)} - \mu^{(k-1)} \big\|_2 \cdot \left\| \nabla \big\| H\big[\mu^{(k)}\big] \big\|_{{\mathcal H}_\infty} \right\|_2 
             = \mathcal{O} \left( \big\| \mu^{(k)} - \mu^{(k-1)} \big\|_2 \cdot \big\|\mu^{(k)} - \mu_*\big\|_2 \right).
\end{split}
\end{equation}
%
%
%
Finally, by exploiting the Lipschitz continuity of $\nabla^2 {\| H[\cdot] \|}_{{\mathcal H}_\infty}$ near $\mu_*$, we obtain
\begin{equation}\label{eq3}
\begin{split}
& \left\| \left[ \nabla^2 \big\| H\big[\mu^{(k)}\big] \big\|_{{\mathcal H}_\infty} \right]^{-1}  \right. \\ 
& \left.
  \quad \times \int_0^1	\left[ \nabla^2 \big\| H\big[\mu^{(k)} + t \big(\mu_\ast - \mu^{(k)}\big)\big] \big\|_{{\mathcal H}_\infty}   -   \nabla^2 \big\| H\big[\mu^{(k)}\big] \big\|_{{\mathcal H}_\infty} \right]   \big(\mu_\ast - \mu^{(k)}\big)  \mathrm{d}t \right\|_2 \\ 
& \hskip 50ex = \mathcal{O} \left( \big\|\mu^{(k)} - \mu_*\big\|_2^2 \right).
\end{split}
\end{equation}
%
%
Combining \eqref{eq:intermed} with \eqref{eq1}, \eqref{eq2}, \eqref{eq3}, and noting
$\big\| \mu^{(k)} - \mu^{(k-1)} \big\|_2 \leq 2 \max\big\{ \big\| \mu^{(k)} - \mu_\ast \big\|_2, \big\| \mu^{(k-1)} - \mu_\ast \big\|_2 \big\}$,  
we finally obtain
\[
	\big\| \mu^{(k+1)} - \mu_\ast \big\|_2 \leq	
	c_1 \max  \big\{ \big\| \mu^{(k)} - \mu_\ast \big\|_2, \big\| \mu^{(k-1)} - \mu_\ast \big\|_2 \big\} \big\| \mu^{(k)} - \mu_\ast \big\|_2 +  c_2 \big\| \mu^{(k)} - \mu_\ast  \big\|_2^2
\] 
for some constants $c_1,\, c_2$ independent of $\mu^{(1)}$ from which \eqref{eq:super_decay} is immediate.
\end{proof}

\begin{remark}
One important assumption for the rate of convergence result above is that the global minimizer $\mu_\ast$
is contained in the interior of $\underline{\Omega}$. Suppose $\underline{\Omega}$ is a box, and 
$\mu_\ast$ lies on the boundary of this box. Then one or more of the box constraints are active for the
full-order problem at $\mu_\ast$, and
$\| H[\cdot] \|_{{\mathcal H}_\infty}$ is increasing in all directions pointing into the interior of $\underline{\Omega}$
in a ball $\overline{{\mathcal B}}(\mu_\ast,\eta)$ (as $\| H[\cdot] \|_{{\mathcal H}_\infty}$ is continuously differentiable in a neighborhood of $\mu_\ast$). The same property holds to be true for the reduced function 
$\widetilde{\sigma}^{{\mathcal V}_k, {\mathcal W}_k}(\cdot)$ in another ball
$\overline{{\mathcal B}}(\mu_\ast,\widetilde{\eta}) \subseteq \overline{{\mathcal B}}(\mu_\ast,\eta)$, due to the interpolation
properties (specifically due to part \textbf{(iii)} of Lemma~\ref{thm:basic_int}), and uniform upper bounds on the
derivatives of $\sigma^{{\mathcal V}_k,{\mathcal W}_k}(\cdot, \cdot)$, $\widetilde{\sigma}^{{\mathcal V}_k,{\mathcal W}_k}(\cdot)$
(see in particular Lemma~\ref{thm:bound_hd} and \ref{thm:3der_bounded}), provided $\mu^{(1)}$ is close enough to $\mu_\ast$.
Consequently, the same active box constraints for the original function $\| H[\cdot] \|_{{\mathcal H}_\infty}$ at $\mu_\ast$ have to be 
active for the reduced function $\widetilde{\sigma}^{{\mathcal V}_k,{\mathcal W}_k}(\cdot)$ at $\mu^{(k+1)}$.
This means that the rate of convergence analysis above, in particular the proof of Theorem~\ref{thm:super_converge},
is applicable by restricting $\mu$ to the variables that are not active at $\mu_\ast$. If all of the constraints are active at $\mu_\ast$, then $\mu^{(k+1)} = \mu_\ast$ in exact arithmetic.

The minimizers for the examples arising from real applications on which we perform numerical experiments in the next section
turn out to be on the boundary of the box, see, e.\,g., Example~\ref{ex:T2DAL} where all of the three box constraints are active
at the minimizer, or Example~\ref{ex:SECM} where only one of the two box constraints is active, while the
other is inactive. On the other hand, the minimizer for the synthetic example in the next section is usually in the interior, see Example~\ref{ex:synthetic}.
\end{remark}

\section{Numerical Experiments}\label{sec:experiments}
In this section, we present numerical results obtained by our MATLAB implementation of Algorithm~\ref{alg1} that we made available for download. 
We first discuss some important implementation details and the test setup in the next subsection. Then, we report 
the numerical results on several large-scale linear parameter-dependent systems which we describe in detail. All test examples are taken from the Model Order Reduction Wiki (MOR Wiki) website\footnote{available at \url{https://morwiki.mpi-magdeburg.mpg.de/morwiki/index.php/Main_Page}.}. Our numerical experiments have been performed on a machine with an 4 Intel\textsuperscript{\textregistered} Core\textsuperscript{\texttrademark} i5-4590 CPUs with 3.30GHz each and 16GB RAM using Linux version 4.4.132-53-default and MATLAB version 9.4.0.813654 (R2018a).

\subsection{Implementation Details and Test Setup}
At each iteration of Algorithm~\ref{alg1}, the ${\mathcal L}_\infty$-norm of the transfer function of a reduced 
parametrized system needs to be minimized. 
We have implemented and tested two optimization techniques to solve this global non-convex optimization problem:
\begin{itemize}
 \item \texttt{eigopt}, a MATLAB implementation of the algorithm in \cite{Mengi2014}, which is an adaptation of the algorithm in \cite{Breiman1993} for eigenvalue optimization. This MATLAB package creates 
a lower and an upper bound for the optimal value of a given eigenvalue function by employing piece-wise quadratic 
support functions, and terminates when the difference between these bounds is less than a prescribed tolerance. 
For reliability and efficiency, one should supply an appropriate global lower bound $\gamma$ on the minimum eigenvalue of the Hessian of the eigenvalue function to be minimized to \texttt{eigopt}. 
 This solver can be slow, if there are many parameters or if $\gamma$ is very small. For our tests we always use $\gamma=-10000$.
 \item \texttt{GRANSO} \cite{Curtis2017}, which is based on BFGS together with line searches ensuring the satisfaction
of the weak Wolfe conditions. \texttt{GRANSO} converges to a locally optimal solution, that is not necessarily optimal
globally, but works efficiently even when there are several parameters.
\end{itemize}

Algorithm~\ref{alg1} is terminated in practice when the relative distance between $\mu^{(k)}$ and $\mu^{(k-1)}$ is less 
than a prescribed tolerance for some $k> 1$, if the minimal ${\mathcal L}_\infty$-norm values for the reduced transfer 
functions at two consecutive iterations differ by less than a prescribed tolerance, or if the number of iterations exceeds a specified integer, more formally, we terminate, if
\begin{multline*}
	k >  k_{\max} \quad \text{or} \quad \big\| \mu^{(k)} - \mu^{(k-1)} \big\|_2  <  \varepsilon_1 \cdot \frac{1}{2} \big\|\mu^{(k)} + \mu^{(k-1)}\big\|_2
		 \quad \text{or}  \\
	\left| \big\| H^{{\mathcal V}_k,{\mathcal W}_k}\big[\mu^{(k+1)}\big] \big\|_{{\mathcal H}_\infty}  -  
					\big\| H^{{\mathcal V}_{k-1},{\mathcal W}_{k-1}}\big[\mu^{(k)}\big] \big\|_{{\mathcal H}_\infty} \right|
					 < \\
			\varepsilon_2 \cdot \frac{1}{2} \left\{  \big\| H^{{\mathcal V}_k,{\mathcal W}_k}\big[\mu^{(k+1)}\big] \big\|_{{\mathcal H}_\infty}  +  
					\big\| H^{{\mathcal V}_{k-1},{\mathcal W}_{k-1}}\big[\mu^{(k)}\big] \big\|_{{\mathcal H}_\infty} \right\}.
\end{multline*}
In our numerical experiments, we set $\varepsilon_1 = \varepsilon_2 = 10^{-6}$ and $k_{\max} = 20$.

The absolute termination tolerance for the accuracy of the global optimizer computed by \texttt{eigopt} is $10^{-8}$, whereas the tolerance for reaching (approximate) stationarity in \texttt{GRANSO} is set to $10^{-12}$. Apart from these we use default options in \texttt{eigopt}, \texttt{GRANSO}, as well as our MATLAB routine \texttt{linorm\_subsp} that implements the method from \cite{Aliyev2017} for computing the $\mathcal{L}_\infty$-norm of the transfer function of a large-scale linear system. In \texttt{linorm\_subsp} we call the FORTRAN routine \texttt{AB13HD.F} via a mex file that implements the method of \cite{BenSV12} to compute the $\mathcal{L}_\infty$-norm of small-scale reduced systems. The latter is often faster and more reliable than the native MATLAB routine \texttt{norm} from the Control Systems Toolbox, that one could use for small-scale $\mathcal{L}_\infty$-norm computations as well. Our initial reduced order models are generated by 10 interpolation points (which consist of pairs of parameter values $\mu$ and frequencies $\omega$) that are equidistantly aligned on a line in $\underline{\Omega} \times [0,\omega_{\max})$ where $\omega_{\max}$ is a problem-dependent maximum frequency. Further details on the implementation can be inferred from the code that we have made available for download. 

\subsection{Results for Real Examples}\label{sec:real_exams}
We first test our algorithm on the following four parameter-dependent descriptor systems, all of which
originate from real applications.

\begin{example}[Thermal conduction (\texttt{T2DAL\_BCI}), see \cite{RudK05}]\label{ex:T2DAL}
Our first example is a thermal conduction model in a chip production. For a compact and efficient model of thermal conduction, one should take into account different configurations of the boundary conditions. This gives the capability to the chip producers to assess how the change in the environment influences the temperature in the chip. A mathematical model of the thermal conduction is given by the heat equation where the heat exchange through the three device interfaces is modeled by convection boundary conditions. These boundary conditions introduce the parameters $\mu_1,\,\mu_2,\,\mu_3$, called  the film coefficients, to describe the change in the temperature on the three device interfaces.
After spatial discretization of the partial differential equation and by incorporating the boundary conditions one obtains a time-invariant linear system with transfer function
\begin{align} \label{sys:thermal}
H[\mu_1,\mu_2,\mu_3](s) = C(sE - (A_0 + \mu_1 A_1 + \mu_2 A_2 + \mu_3 A_3))^{-1}B
\end{align}
where $E \in \R^{4257\times 4257}$ and $A_i \in {\mathbb R}^{4257 \times 4257}$, $i=1,\,2,\,3$ are diagonal matrices arising from the discretization of convection 
boundary conditions on the $i$-th interface and $B \in {\mathbb R}^{4257 \times 1}$, $C \in {\mathbb R}^{7 \times 4257}$ are the input and output matrices, respectively.
The specified box for the parameter $\mu := \left[\begin{smallmatrix} \mu_1 \\ \mu_2 \\ \mu_3 \end{smallmatrix}\right]$ is $\big[1,10^4\big] \times \big[1,10^4\big] \times \big[1,10^4\big]$.
\end{example}
We report on the results of Algorithm~\ref{alg1} on the \texttt{T2DAL\_BCI} example for different setups in Table~\ref{tab:t2dal}.
\begin{table}[htb]
\centering
\caption{Numerical results for the \texttt{T2DAL\_BCI} example.}   \label{tab:t2dal}
 \begin{tabular}{c|cccc}
    setup & $n_{\rm iter}$ & $(\mu_{1,*},\mu_{2,*},\mu_{3,*})$ & $\left\| H[\mu_{1,*},\mu_{2,*},\mu_{3,*}] \right\|_{\mathcal{H}_\infty} $  & time in s \\ \hline
    \texttt{eigopt} & 2 & (1.0000e+4,\,1.0000e+4,\,1.0000e+4) & 1.15429e+1 & 374.25 \\
    \texttt{GRANSO} & 2 & (1.0000e+4,\,1.0000e+4,\,1.0000e+4) & 1.15429e+1 & 2.54 
 \end{tabular}
\end{table}

\begin{example}[Anemometer (\texttt{anemometer\_1p} and \texttt{anemometer\_3p}), see  \cite{Greiner2011}]
An ane\-mo\-meter is a device to measure heat flow which consists of a heater and temperature sensors placed near the heater. The temperature field is affected by the flow and hence a temperature difference occurs between the sensors. 
The measured temperature difference determines the velocity of the fluid flow. The mathematical model for the anemometer is given by the 
convection-diffusion equation
$$
	\rho c \frac{\partial T}{\partial t} = \nabla (\kappa \nabla T) -\rho c v \nabla T + q',
$$
where $\rho$ denotes the mass density, $c$ is the specific heat, $\kappa$ is the thermal conductivity, $v$ is the fluid velocity, $T$ is the 
temperature, and $q'$ is the heat flow. A spatial discretization of the convection-diffusion equation above, for instance by the finite element method, yields a parametric linear system with the transfer function
\begin{align*}
	H[v](s) = C (sE -( A_1 + v (A_2 - A_1)))^{-1}B 
\end{align*}
which depends on only the fluid velocity $v \in [0,1]$; or a parametric system with the transfer function 
$$
	H[c,\kappa,v](s) = C (s (E_1 + c E_2) -( A_1 + \kappa A_2 + cvA_3)   )^{-1}B  
$$
where three parameters $c\in[0,1],\, \kappa \in [1,2],\, v\in [0.1,2]$ appear. 
The input and output matrices $B$ and $C$ above result from separating the spatial variables in $q'$. 
We refer to these one parameter and three parameter examples
as \texttt{anemometer\_1p} and  \texttt{anemometer\_3p}, respectively. In both cases, the order of the state space is 29008, there is a single input and a single output.
\end{example}
We report on the results of Algorithm~\ref{alg1} on the \texttt{anemometer\_1p} and  \texttt{anemometer\_3p} examples for different setups in Tables~\ref{tab:ane1} and~\ref{tab:ane3}, respectively.
\begin{table}[htb]
\centering
\caption{Numerical results for the \texttt{anemometer\_1p} example.}   \label{tab:ane1}
 \begin{tabular}{c|cccc}
    setup & $n_{\rm iter}$ & $v_*$ & $\left\| H[v_*] \right\|_{\mathcal{H}_\infty} $  & time in s \\ \hline
    \texttt{eigopt} & 6 & 0.0000 & 1.32274e--2 & 32.68 \\ 
    \texttt{GRANSO} & 6 & -0.0000 & 1.32274e--2 & 30.91 
 \end{tabular}
\end{table}

\begin{table}[htb]
\centering
\caption{Numerical results for the \texttt{anemometer\_3p} example.}   \label{tab:ane3}
 \begin{tabular}{c|cccc}
    setup & $n_{\rm iter}$ & $(c_{*},\kappa_{*},v_{*})$ & $\left\| H[c_{*},\kappa_{*},v_{*}] \right\|_{\mathcal{H}_\infty} $  & time in s \\ \hline
    \texttt{eigopt} & 4 & (0.0000, 2.0000, 1.0000e--1) & 1.64723e--3 & 766.06 \\
    \texttt{GRANSO} & 3 & (0.0000, 2.0000, 8.3855e--1) & 1.64723e--3 & 40.93 
 \end{tabular}
\end{table}


\begin{example}[Scanning electrochemical microscopy (\texttt{SECM}), see \cite{FenKRK06}]\label{ex:SECM} 
Scanning electrochemical microscopy is a technique to analyze the electrochemical behavior of species (in different states of matter) at their interface. This example considers the chemical reaction between two species on an electrode. The species transport is described by the second Fick's law which leads to two partial diffusion equations with appropriate boundary conditions. A spatial discretization together with a boundary control then leads to a linear-time invariant system whose transfer function is 
$$
  H[h_1,h_2](s) = C (sE - (h_1A_1 + h_2D_2 -A_3))^{-1}B,
$$
%
where $E,\,A_1,\,A_2,\,A_3\in {\mathbb R}^{16912\times 16912}$, $B\in {\mathbb R}^{16912\times 1}$, and $C\in {\mathbb R}^{5\times16912}$ and $h_1,\,h_2$ are the parameters of the problem. The experiment is performed in the box $\big[1, e^2\big] \times \big[1, e^2\big]$.
\end{example}
The results for the \texttt{SECM} example are summarized in Table~\ref{tab:secm}
\begin{table}[htb]
\centering
\caption{Numerical results for the \texttt{SECM} example.}   \label{tab:secm}
 \begin{tabular}{c|cccc}
    setup & $n_{\rm iter}$ & $(h_{1,*},h_{2,*})$ & $\left\| H[h_{1,*},h_{2,*}] \right\|_{\mathcal{H}_\infty} $  & time in s \\ \hline
    \texttt{eigopt} & 5 & (1.0000,\, 4.1944) & 1.85588 & 180.01 \\
    \texttt{GRANSO} & 5 & (1.0000,\, 4.2882) & 1.85583 & 20.51 
 \end{tabular}
\end{table}




 
In all examples, we observe superliner convergence in the final iterations. Specifically, for the \texttt{SECM} example, we report the errors with respect to the iteration number in Table~\ref{tab:pquad}. Four additional 
iterations after the construction of the initial reduced model suffice to find the minimal ${\mathcal H}_{\infty}$-norm with the specified relative tolerances.
For most examples, in particular the ones with more than one parameter, using \texttt{GRANSO} is significantly faster than \texttt{eigopt}. On the other hand, 
in contrast to \texttt{GRANSO}, \texttt{eigopt} returns the global minimizer for the reduced problems 
and thus sometimes yields more reliable results. In particular, due to the local convergence issue with \texttt{GRANSO}, rarely 
the subspace framework equipped with \texttt{GRANSO} does not converge to the global minimizer of the full problem,
while the one with \texttt{eigopt} does converge to the global minimizer of the full problem. 
This can for example be seen in the synthetic example discussed below.


\begin{table}[htb]
\caption{The minimizers for the reduced problems as well as the errors of the iterates of Algorithm~\ref{alg1} and the corresponding errors in the ${\mathcal H}_{\infty}$-norms 
are listed for the \texttt{SECM} example by using \texttt{GRANSO} for optimization. Here, the short-hands 
$f^{(k)} := \big\| H^{{\mathcal V}_k, {\mathcal W}_k}[\mu^{(k+1)}] \big\|_{{\mathcal L}_{\infty}}$ and 
$f_\ast := {\| H[\mu_\ast] \|}_{{\mathcal H}_{\infty}}$ are used.}
\label{tab:pquad}
\begin{center}
	\begin{tabular}{l|ccc}
	$k$ & $\mu^{(k+1)}$ & ${\big\|\mu^{(k+1)} - \mu_\ast \big\|}_2$ &   $\big|f^{(k)} - f_\ast \big| $ \\	
	\hline	
   	0    & (1.0000, 1.2088) &	3.08 &    2.61e--1	\\
   	1    & (1.6579, 7.3891) &  3.17  &   2.72e--4   \\
   	2    & (1.4761, 6.3522) & 	2.12   &  1.51e--4   \\
	3    & (1.0000, 4.2882) &   1.55e--9 & 1.24e--12 \\
	4    & (1.0000, 4.2882) &   $<$ 1e--12 & $<$ 1e--12
	\end{tabular}
\end{center}
\end{table}

To our knowledge, there is no reliable and efficient algorithm for large-scale ${\mathcal H}_\infty$-norm minimization 
in the literature which we can use for comparison purposes and verify the correctness of the results obtained. 
Instead, for each example above, we have computed the ${\mathcal H}_\infty$-norm of the system for various 
values of $\mu$ near the computed optimal parameter value $\mu_\ast$.
According to these computations, the optimal parameter values listed above seem to be at least locally optimal.   
For three of the examples, the plots of the ${\mathcal H}_\infty$-norm as a function of $\mu$ are illustrated 
in Figure \ref{fig:fctplots}

\begin{figure}[htb]
 \centering
 \subfloat[][Graph of $\mu_1 \mapsto \left\|H{[}\mu_1,\mu_{*,2},\mu_{*,3}{]} \right\|_{\mathcal{H}_\infty}$ for the \texttt{T2DAL\_BCI} example.]{
%
%
\begin{tikzpicture}

\begin{axis}[%
width=2.0in,
height=1.3in,
at={(1.882in,1.237in)},
scale only axis,
xmin=0,
xmax=10000,
xlabel style={font=\color{white!15!black}},
xlabel={$\mu_1$},
ymin=10,
ymax=70,
ylabel style={font=\color{white!15!black}},
ylabel={$\left\|H[\mu_1,\mu_{*,2},\mu_{*,3}] \right\|_{\mathcal{H}_\infty}$},
axis background/.style={fill=white},
title style={font=\bfseries},
title={}
]
\addplot [color=black, line width=1.0pt, forget plot]
  table[row sep=crcr]{%
0	65.915583094463\\
100	61.7826322060286\\
200	58.2424742162099\\
300	55.1686671036538\\
400	52.4689552003212\\
500	50.0743843902283\\
600	47.9323347574401\\
700	46.0019178327485\\
800	44.2508515125339\\
900	42.6532867185532\\
1000	41.1882635255659\\
1100	39.8385935209386\\
1200	38.5900368876044\\
1300	37.4306871591195\\
1400	36.350504819984\\
1500	35.3409592436723\\
1600	34.394750613198\\
1700	33.505591652742\\
1800	32.6680346197886\\
1900	31.8773329302396\\
2000	31.1293295488897\\
2100	30.4203662641859\\
2200	29.7472094011674\\
2300	29.1069885792425\\
2400	28.4971459015644\\
2500	27.9153935473054\\
2600	27.359678177275\\
2700	26.8281509002894\\
2800	26.3191418041117\\
2900	25.831138255733\\
3000	25.3627663304234\\
3100	24.9127748509178\\
3200	24.4800216147184\\
3300	24.0634614643714\\
3400	23.662135916097\\
3500	23.2751641125987\\
3600	22.9017349047417\\
3700	22.5410998991317\\
3800	22.1925673358674\\
3900	21.855496681636\\
4000	21.5292938412944\\
4100	21.2134069064941\\
4200	20.907322371094\\
4300	20.6105617538697\\
4400	20.3226785784797\\
4500	20.0432556658887\\
4600	19.7719027022875\\
4700	19.5082540497732\\
4800	19.2519667716708\\
4900	19.0027188479693\\
5000	18.76020755987\\
5100	18.524148024224\\
5200	18.2942718622903\\
5300	18.0703259882514\\
5400	17.8520715046897\\
5500	17.6392826944991\\
5600	17.4317460989271\\
5700	17.2292596736087\\
5800	17.0316320147273\\
5900	16.8386816482054\\
6000	16.6502363765442\\
6100	16.4661326773929\\
6200	16.2862151490241\\
6300	16.1103359990102\\
6400	15.9383545714962\\
6500	15.7701369099052\\
6600	15.6055553523342\\
6700	15.4444881560569\\
6800	15.2868191495945\\
6900	15.1324374089587\\
7000	14.981236957143\\
7100	14.8331164839846\\
7200	14.6879790856733\\
7300	14.5457320212719\\
7400	14.4062864860757\\
7500	14.2695573995207\\
7600	14.1354632069983\\
7700	14.0039256946215\\
7800	13.8748698153953\\
7900	13.7482235266637\\
8000	13.6239176375012\\
8100	13.501885665524\\
8200	13.3820637022755\\
8300	13.2643902869839\\
8400	13.1488062875301\\
8500	13.0352547885644\\
8600	12.9236809860883\\
8700	12.8140320881096\\
8800	12.7062572209986\\
8900	12.6003073410726\\
9000	12.4961351510391\\
9100	12.3936950212554\\
9200	12.2929429151178\\
9300	12.1938363185057\\
9400	12.0963341730892\\
9500	12.0003968131388\\
9600	11.9059859056234\\
9700	11.8130643934014\\
9800	11.7215964415312\\
9900	11.6315473860106\\
10000	11.5428836854236\\
};
\addplot [color=red, line width=1.0pt, only marks, mark size=3.0pt, mark=o, mark options={solid, red}, forget plot]
  table[row sep=crcr]{%
9999.99999999873	11.542883685417\\
};
\end{axis}
\end{tikzpicture}
  \subfloat[][Graph of $\mu_2 \mapsto \left\|H{[}\mu_{\ast,1},\mu_{2},\mu_{\ast,3}{]} \right\|_{\mathcal{H}_\infty}$ for the \texttt{T2DAL\_BCI} example.]{
%
%
\begin{tikzpicture}

\begin{axis}[%
width=2.0in,
height=1.3in,
at={(1.882in,1.237in)},
scale only axis,
xmin=0,
xmax=10000,
xlabel style={font=\color{white!15!black}},
xlabel={$\mu_2$},
ymin=11.5428,
ymax=11.5442,
ylabel style={font=\color{white!15!black},yshift=1ex},
ylabel={$\left\|H[\mu_{*,1},\mu_2,\mu_{*,3}]\right\|_{\mathcal{H}_\infty}$},
yticklabel style={/pgf/number format/precision=3},
ytick={11.543, 11.544},
axis background/.style={fill=white},
title style={font=\bfseries},
title={}
]
\addplot [color=black, line width=1.0pt, forget plot]
  table[row sep=crcr]{%
0	11.5441060332711\\
100	11.5440877517884\\
200	11.5440696555401\\
300	11.5440517415588\\
400	11.5440340069735\\
500	11.5440164489391\\
600	11.5439990646743\\
700	11.5439818515372\\
800	11.5439648068061\\
900	11.5439479279582\\
1000	11.5439312124297\\
1100	11.5439146577587\\
1200	11.5438982614955\\
1300	11.5438820213106\\
1400	11.5438659348564\\
1500	11.5438499998607\\
1600	11.5438342140977\\
1700	11.5438185753878\\
1800	11.5438030816027\\
1900	11.5437877306321\\
2000	11.543772520433\\
2100	11.5437574490083\\
2200	11.5437425143794\\
2300	11.5437277146026\\
2400	11.5437130477991\\
2500	11.5436985121354\\
2600	11.5436841057365\\
2700	11.5436698268706\\
2800	11.5436556737375\\
2900	11.5436416446711\\
3000	11.5436277379774\\
3100	11.543613951966\\
3200	11.5436002850427\\
3300	11.5435867356203\\
3400	11.5435733021259\\
3500	11.5435599830284\\
3600	11.5435467768234\\
3700	11.5435336820327\\
3800	11.5435206971973\\
3900	11.5435078209054\\
4000	11.5434950517518\\
4100	11.5434823883573\\
4200	11.5434698293726\\
4300	11.5434573734692\\
4400	11.5434450193522\\
4500	11.5434327657347\\
4600	11.543420611361\\
4700	11.5434085549843\\
4800	11.5433965953919\\
4900	11.5433847314098\\
5000	11.5433729618177\\
5100	11.5433612855233\\
5200	11.543349701338\\
5300	11.5433382081696\\
5400	11.5433268049019\\
5500	11.5433154904599\\
5600	11.5433042638109\\
5700	11.5432931238731\\
5800	11.5432820696325\\
5900	11.5432711000689\\
6000	11.5432602142351\\
6100	11.5432494110845\\
6200	11.5432386896734\\
6300	11.5432280490755\\
6400	11.5432174883505\\
6500	11.5432070065834\\
6600	11.54319660285\\
6700	11.5431862762785\\
6800	11.5431760259765\\
6900	11.5431658511107\\
7000	11.5431557508084\\
7100	11.5431457242486\\
7200	11.543135770597\\
7300	11.5431258890457\\
7400	11.5431160787846\\
7500	11.543106339064\\
7600	11.54309666907\\
7700	11.5430870680668\\
7800	11.5430775352657\\
7900	11.5430680699762\\
8000	11.5430586714498\\
8100	11.543049338951\\
8200	11.5430400717884\\
8300	11.5430308692503\\
8400	11.543021730658\\
8500	11.5430126553503\\
8600	11.5430036426259\\
8700	11.5429946918533\\
8800	11.5429858023582\\
8900	11.5429769735389\\
9000	11.542968204721\\
9100	11.5429594953117\\
9200	11.5429508446899\\
9300	11.5429422522542\\
9400	11.5429337174063\\
9500	11.5429252395533\\
9600	11.5429168181303\\
9700	11.54290845253\\
9800	11.5429001422438\\
9900	11.5428918866705\\
10000	11.5428836852825\\
};
\addplot [color=red, line width=1.0pt, only marks, mark size=3.0pt, mark=o, mark options={solid, red}, forget plot]
  table[row sep=crcr]{%
9999.99826817524	11.542883685417\\
};
\end{axis}
\end{tikzpicture}
  \subfloat[][Graph of $v \mapsto  {\| H{[}v{]} \|}_{{\mathcal H}_\infty}$ for the \texttt{anemometer\_1p} example.]{
%
%
\begin{tikzpicture}

\begin{axis}[%
width=2.0in,
height=1.3in,
at={(1.882in,1.237in)},
scale only axis,
xmin=0,
xmax=1,
xlabel style={font=\color{white!15!black}},
xlabel={$v$},
ymin=0,
ymax=0.5,
ylabel style={font=\color{white!15!black}},
ylabel={${\| H[v] \|}_{{\mathcal H}_\infty}$},
axis background/.style={fill=white},
title style={font=\bfseries},
title={}
]
\addplot [color=black, line width=1.0pt, forget plot]
  table[row sep=crcr]{%
0	0.0132273832605601\\
0.025	0.0259537242099875\\
0.05	0.0400627477157661\\
0.075	0.0567642721752804\\
0.1	0.0768032159162271\\
0.125	0.100326069189744\\
0.15	0.126900471086064\\
0.175	0.155670463765753\\
0.2	0.185570635434202\\
0.225	0.215522630416488\\
0.25	0.244572751614412\\
0.275	0.271964298178549\\
0.3	0.297157736809828\\
0.325	0.319817208440476\\
0.35	0.33977961554283\\
0.375	0.357017775119385\\
0.4	0.37160456008341\\
0.425	0.383681530926906\\
0.45	0.393433340825017\\
0.475	0.401067922747215\\
0.5	0.406801833547023\\
0.525	0.410849889067955\\
0.55	0.413418201764244\\
0.575	0.41469981709722\\
0.6	0.414872271500524\\
0.625	0.414096527339609\\
0.65	0.412516861623968\\
0.675	0.410261388377377\\
0.7	0.407442978280641\\
0.725	0.404160404960914\\
0.75	0.400499597698144\\
0.775	0.396534918167279\\
0.8	0.392330406753277\\
0.825	0.387940964223147\\
0.85	0.383413448961248\\
0.875	0.37878768004705\\
0.9	0.374097343318967\\
0.925	0.369370802099802\\
0.95	0.364631817115866\\
0.975	0.359900181813481\\
1	0.3551922801254\\
};
\addplot [color=red, line width=1.0pt, only marks, mark size=3.0pt, mark=o, mark options={solid, red}, forget plot]
  table[row sep=crcr]{%
0	0.0132273832605641\\
};
\end{axis}
\end{tikzpicture}
  \subfloat[][Graph of $(\mu_1,\mu_2) \mapsto  {\| H{[}\mu_1,\mu_2 {]} \|}_{{\mathcal H}_\infty}$ for the \texttt{SECM} example.]{\input{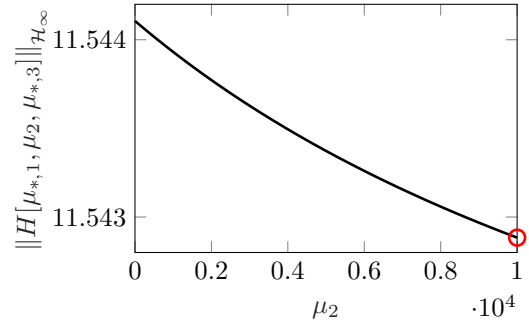}
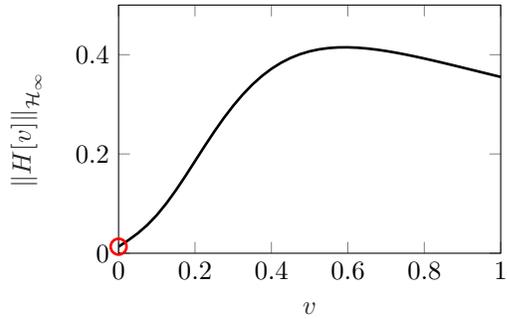
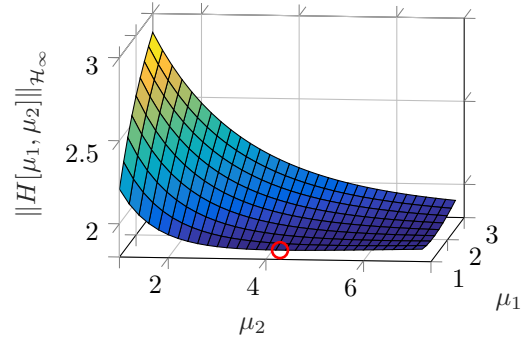
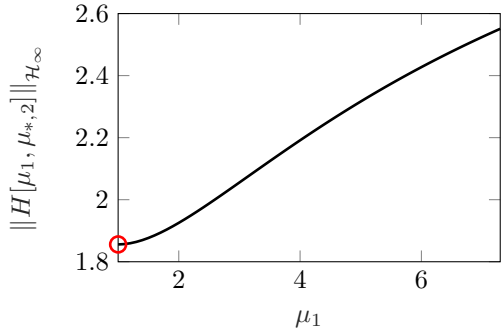
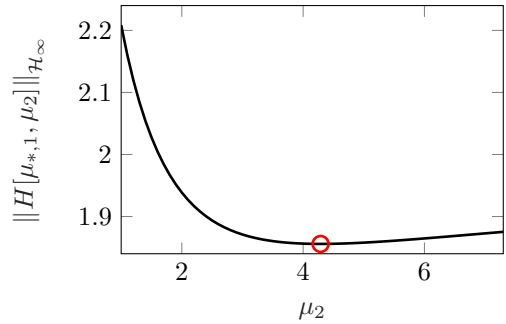} \\
  \subfloat[][Graph of $\mu_1 \mapsto  {\| H{[}\mu_1,\mu_{\ast,2} {]} \|}_{{\mathcal H}_\infty}$ for the \texttt{SECM} example.]{
%
%
\begin{tikzpicture}

\begin{axis}[%
width=2.0in,
height=1.3in,
at={(1.882in,1.237in)},
scale only axis,
xmin=1,
xmax=7.3,
xlabel style={font=\color{white!15!black}},
xlabel={$\mu_1$},
ymin=1.8,
ymax=2.6,
ylabel style={font=\color{white!15!black}},
ylabel={$\left\|H[\mu_1,\mu_{*,2}]\right\|_{\mathcal{H}_\infty}$},
axis background/.style={fill=white}
]
\addplot [color=black, line width=1.0pt, forget plot]
  table[row sep=crcr]{%
1	1.85582762017882\\
1.1	1.85684935455179\\
1.2	1.85964030489599\\
1.3	1.86400356989833\\
1.4	1.8697594277634\\
1.5	1.87674433789623\\
1.6	1.8848098996184\\
1.7	1.89382179321524\\
1.8	1.90365872499848\\
1.9	1.91421139412378\\
2	1.92538149499327\\
2.1	1.9370807654498\\
2.2	1.94923008777307\\
2.3	1.96175864677222\\
2.4	1.97460314705324\\
2.5	1.98770708978095\\
2.6	2.00102010791645\\
2.7	2.01449735792745\\
2.8	2.02809896528175\\
2.9	2.04178952059765\\
3	2.05553762306314\\
3.1	2.06931546764187\\
3.2	2.08309847258123\\
3.3	2.09686494382628\\
3.4	2.11059577308547\\
3.5	2.12427416646598\\
3.6	2.13788540080175\\
3.7	2.15141660499887\\
3.8	2.16485656394059\\
3.9	2.17819554269606\\
4	2.19142512897996\\
4.1	2.20453809199897\\
4.2	2.21752825599574\\
4.3	2.23039038696947\\
4.4	2.24312009120025\\
4.5	2.25571372434447\\
4.6	2.26816830999507\\
4.7	2.28048146671379\\
4.8	2.29265134264797\\
4.9	2.30467655693526\\
5	2.31655614718535\\
5.1	2.32828952240234\\
5.2	2.33987642078039\\
5.3	2.35131687186126\\
5.4	2.36261116260351\\
5.5	2.37375980695394\\
5.6	2.38476351856084\\
5.7	2.3956231863024\\
5.8	2.40633985234314\\
5.9	2.4169146924554\\
6	2.42734899837676\\
6.1	2.43764416199404\\
6.2	2.4478016611686\\
6.3	2.45782304703543\\
6.4	2.46770993262849\\
6.5	2.47746398269602\\
6.6	2.48708690458919\\
6.7	2.49658044011217\\
6.8	2.50594635824218\\
6.9	2.51518644862717\\
7	2.52430251578821\\
7.1	2.53329637395309\\
7.2	2.54216984246054\\
7.3	2.55092474167602\\
};
\addplot [color=red, line width=1.0pt, only marks, mark size=3.0pt, mark=o, mark options={solid, red}, forget plot]
  table[row sep=crcr]{%
1.00000000013407	1.8558276201789\\
};
\end{axis}
\end{tikzpicture}
  \subfloat[][Graph of $\mu_2 \mapsto  {\| H{[}\mu_{\ast,1},\mu_{2} {]} \|}_{{\mathcal H}_\infty}$ for the \texttt{SECM} example.]{
%
%
\begin{tikzpicture}

\begin{axis}[%
width=2.0in,
height=1.3in,
at={(1.882in,1.237in)},
scale only axis,
xmin=1,
xmax=7.3,
xlabel style={font=\color{white!15!black}},
xlabel={$\mu_2$},
ymin=1.84,
ymax=2.24,
ylabel style={font=\color{white!15!black}},
ylabel={$\left\|H[\mu_{*,1},\mu_2]\right\|_{\mathcal{H}_\infty}$},
axis background/.style={fill=white},
title style={font=\bfseries},
title={}
]
\addplot [color=black, line width=1.0pt, forget plot]
  table[row sep=crcr]{%
1	2.20855926584125\\
1.1	2.16097014869783\\
1.2	2.12000939476662\\
1.3	2.08466067220135\\
1.4	2.05408315538149\\
1.5	2.02757748438931\\
1.6	2.00455913375589\\
1.7	1.98453737954696\\
1.8	1.96709854942503\\
1.9	1.95189258557342\\
2	1.93862219503784\\
2.1	1.92703403820638\\
2.2	1.91691153491646\\
2.3	1.90806896305566\\
2.4	1.90034659608608\\
2.5	1.89360668018462\\
2.6	1.88773009326869\\
2.7	1.88261356031088\\
2.8	1.87816732437282\\
2.9	1.87431319242755\\
3	1.87098289054377\\
3.1	1.86811667530675\\
3.2	1.86566215818133\\
3.3	1.86357330738855\\
3.4	1.86180959821445\\
3.5	1.86033528779414\\
3.6	1.85911879457443\\
3.7	1.85813216604733\\
3.8	1.85735062111318\\
3.9	1.85675215570394\\
4	1.85631720216119\\
4.1	1.85602833440261\\
4.2	1.85587001218281\\
4.3	1.85582835880937\\
4.4	1.85589096755283\\
4.5	1.85604673271921\\
4.6	1.85628570196666\\
4.7	1.85659894695645\\
4.8	1.85697844986286\\
4.9	1.85741700362342\\
5	1.85790812412001\\
5.1	1.85844597273641\\
5.2	1.8590252879586\\
5.3	1.85964132486821\\
5.4	1.86028980153824\\
5.5	1.86096685147422\\
5.6	1.86166898135972\\
5.7	1.86239303346183\\
5.8	1.86313615213928\\
5.9	1.86389575396555\\
6	1.86466950104429\\
6.1	1.8654552771449\\
6.2	1.86625116633678\\
6.3	1.86705543383712\\
6.4	1.86786650882408\\
6.5	1.86868296899652\\
6.6	1.86950352668838\\
6.7	1.87032701636863\\
6.8	1.87115238337546\\
6.9	1.87197867375621\\
7	1.87280502509211\\
7.1	1.87363065820823\\
7.2	1.87445486967415\\
7.3	1.87527702501521\\
};
\addplot [color=red, line width=1.0pt, only marks, mark size=3.0pt, mark=o, mark options={solid, red}, forget plot]
  table[row sep=crcr]{%
4.28814375539346	1.8558276201789\\
};
\end{axis}
\end{tikzpicture}
 \caption{The $\mathcal{\cH}_\infty$-norms for the different examples, where the computed minimal norm value is marked by a red circle.
 Note that in the captions and legends of (a)--(f), $\mu_{\ast,j}$ denotes the $j$th component of $\mu_\ast$ for $j = 1,\,2$, or 3.} 
 \label{fig:fctplots}
\end{figure}

\subsection{Results for Synthetic Examples}

Next, we test our approach on synthetic examples of various orders taken from the MOR Wiki. 

\begin{example}[Synthetic example]\label{ex:synth}\label{ex:synthetic}
We consider parametric single-input, single-output systems of order $\sn = 2\mathsf{q}$ with transfer functions of the form
\begin{equation}\label{eq:synth}
	H[\mu](s) = C(sI_\sn - \mu A_1 - A_0)^{-1}B,
\end{equation}
where the matrices $A_1,\,A_0\in \R^{\sn\times \sn}$, $B\in\R^{\sn\times 1}$, $C\in\R^{1\times \sn}$ are given by
\begin{align*}
	A_1 =
  		\begin{bmatrix}
    			A_{1,1} & & \\
    				    & \ddots & \\
    				    & & A_{1,\sm}
  		\end{bmatrix}, \;
  	A_0 =
  		\begin{bmatrix}
    			A_{0,1} & & \\
    				& \ddots & \\
    				& & A_{0,\sm}
  		\end{bmatrix}, \;
  	B =
  		\begin{bmatrix}
    			B_1\\
     			\vdots \\
    			B_{\sm}
  		\end{bmatrix}, \;
  	C =
  		\begin{bmatrix}
    			C_1 & \ldots &
    			C_{\sm}
  		\end{bmatrix}
\end{align*}
with 
$$ 
A_{1,i} =
  	\begin{bmatrix}
    		a_{i} & 0 \\
    		0 & a_{i}
  	\end{bmatrix}, \quad 
A_{0,i} =
  	\begin{bmatrix}
    		0 & b_{i}  \\
    		-b_i & 0
  	\end{bmatrix}, \quad  
  B_i =
  	\begin{bmatrix}
    		2\\
    		0
  	\end{bmatrix}, \quad
  C_i =
  	\begin{bmatrix}
    		1 &
    		0
  	\end{bmatrix}, \quad i=1,\,\ldots,\,\sm.
$$
The numbers $a_i$ and $b_i$ are chosen equidistantly in the intervals $[-10^3,-10]$ and $[10,10^3]$, respectively. 
The parameter $\mu$ is constrained to lie in the interval $[0.02,1]$.
\end{example}


We perform our experiments on this synthetic example for several values of $\sn$ varying in $10^2$, \ldots, $10^6$.
For smaller values of $\sn$, a comparison of Algorithm~\ref{alg1}
and the MATLAB package \texttt{eigopt} (for the unreduced problems) is provided in Table \ref{tab:synth1}.
This table indicates that with or without reduction we retrieve exactly the same optimal
${\mathcal H}_\infty$-norm values up to the prescribed tolerance $\varepsilon_2 = 10^{-6}$,
yet the proposed subspace framework leads to speed-ups on the order of $10^3$,
indeed the ratios of the runtimes increase quickly with respect to $\sn$.

Larger examples are considered in Table \ref{tab:synth2}, but only using the proposed subspace framework.
It does not seem possible to solve these larger ${\mathcal H}_\infty$-norm minimization problems in a reasonable 
time without employing reductions. Even the examples of order $10^6$ can be solved very fast. All examples 
up to order $10^5$ can be solved with just two to four iterations, only for very large examples up to 9 iterations 
may be needed. Moreover, the largest fraction of the computation time is spent for solving large-scale linear systems.   



Note that we have used \texttt{eigopt} for the optimization of the the small subproblems here which is guaranteed to return a global minimizer. We observe in practice that when the reduced problems are solved by a locally convergent algorithm, convergence to $\mu = 1$, a locally optimal solution, sometimes occurs. This is in particular the case for some values of $\sn$ when the reduced problems are solved with \texttt{GRANSO}. 
Also note that for the computation of the $\mathcal{L}_\infty$-norm in this example we make use of the native MATLAB function \texttt{norm}, since the periodic QZ algorithm used for the eigenvalue computation in \texttt{AB13HD.f} does not converge always. Further, we have set $\gamma = -1000$ in \texttt{eigopt} -- otherwise, the runtimes would be higher.

Finally, the progress of the subspace framework is displayed in Figure \ref{fig:synth_progress} on this synthetic
example for $\sn = 500$. After one subspace iteration, the ${\mathcal L}_\infty$-norm of the reduced problem
already closely resembles the one for the original problem around the minimizer. After two subspace iterations, it is even hard to
distinguish the ${\mathcal L}_\infty$-norm functions for the reduced and original problems around the minimizer, except for a thin peak that occurs in the reduced problem. The progress of the iteration is further summarized in Table~\ref{tab:prgsynthetic}.
\begin{table}[tb]
\caption{The minimizers for the reduced problems as well as the errors of the iterates of Algorithm~\ref{alg1} and the corresponding errors in the ${\mathcal H}_{\infty}$-norms 
are listed for the \texttt{synthetic} example for $\sn = 500$ by using \texttt{eigopt} for optimization. Here, again the short-hands 
$f^{(k)} := \big\| H^{{\mathcal V}_k, {\mathcal W}_k}[\mu^{(k+1)}] \big\|_{{\mathcal L}_{\infty}}$ and 
$f_\ast := {\| H[\mu_\ast] \|}_{{\mathcal H}_{\infty}}$ are used.}
\label{tab:prgsynthetic}
\begin{center}
	\begin{tabular}{l|ccc}
	$k$ & $\mu^{(k+1)}$ & ${\big| \mu^{(k+1)} - \mu_\ast \big|}_2$ &   $\big|f^{(k)} - f_\ast \big| $ \\	
	\hline	
   	0    & 0.5100 &	2.74e--1 &   0.12e--1	\\
   	1    & 0.2354 & 2.94e--4  &   5.01e--7   \\
   	2    & 0.2357 & $<$ 1e--12   &  $<$ 1e--12   \\
	\end{tabular}
\end{center}
\end{table}

\begin{table}[htb]
\centering
\caption{Results of the numerical experiments on Example~\ref{ex:synth} for smaller values of $\sn$, where we list
the number of subspace iterations $n_{\rm iter}$, the optimal parameter values by Algorithm~\ref{alg1} and \texttt{eigopt}, and the corresponding minimal ${\mathcal H}_{\infty}$-norms, as well as the runtimes are listed. The optimal ${\mathcal H}_\infty$-norm values returned by Algorithm~\ref{alg1} are the same with those returned by \texttt{eigopt} at least up to six decimal digits.}
\label{tab:synth1}
\begin{tabular}{cc|cc|cc|cc} 
  	  \multicolumn{2}{c}{}  & \multicolumn{2}{c}{$\mu_*$} &   \multicolumn{2}{c}{$\left\| H[\mu_*] \right\|_{{\mathcal H}_\infty}$}   & \multicolumn{2}{c}{runtime in s}   \\  
  	$\sn$  &  $n_{\rm iter}$ &  Alg.~\ref{alg1} &  \texttt{eigopt}  &  Alg.~\ref{alg1} & \texttt{eigopt}  & Alg.~\ref{alg1} & \texttt{eigopt} \\  \hline
	100    &   2 &  1.000000 & 1.000000  &  0.317092 &  0.317092 & 1.33 & 6.98    \\
	200    &  2 & 1.000000 &  1.000000 &  0.549800  & 0.549800  & 0.82 & 52.33 \\
	400    &  3 & 0.270587 &  0.270549 & 0.969289 & 0.969289  & 3.85 & 455.07 \\  
	600    & 4 & 0.212279 & 0.212255 & 1.337220 &   1.337219  & 3.06 & 1563.83 \\
	800   & 2 &  0.181492 & 0.181501 &  1.706940  &  1.706940 & 1.65 & 2635.76 \\
\end{tabular}
\end{table}

\begin{table}[htb]
\centering
\caption{
		The performance of Algorithm~\ref{alg1} on Example \ref{ex:synth} for larger values of $\sn$, where we have used \texttt{eigopt} for the optimization of the reduced subproblems. 
	}
\label{tab:synth2}
	\begin{tabular}{cc|ccc} 
		$\sn$		& $n_{\rm iter}$ &  $\mu_*$	& $\left\| H[\mu_*] \right\|_{\mathcal{H}_\infty}$ &	runtime in s	\\ \hline 
		1000    	& 4 & 0.157222 & 2.08316 &  3.61 \\
		2000    	& 4 & 0.115748 & 4.08243 &  4.68 \\
		5000    	& 2 & 0.113064 & 10.1718 &  2.15  \\
		10000    	& 2 & 0.112964 & 20.3321 &  1.64  \\
		20000    	& 2 & 0.113009 & 40.6554 &  1.37  \\
		50000    	& 2 & 0.113066 & 101.628 &  1.70 \\
		100000          & 2 & 0.113090 & 203.248 &  2.69  \\
		200000          & 2 & 0.113102 & 406.490 &  5.04  \\
		500000          & 2 & 0.113111 & 1016.22 & 12.53  \\
		1000000         & 2 & 0.113113 & 2032.43 & 26.11  \\
	\end{tabular}
\end{table}


\begin{figure}[htb]
\centering
\input{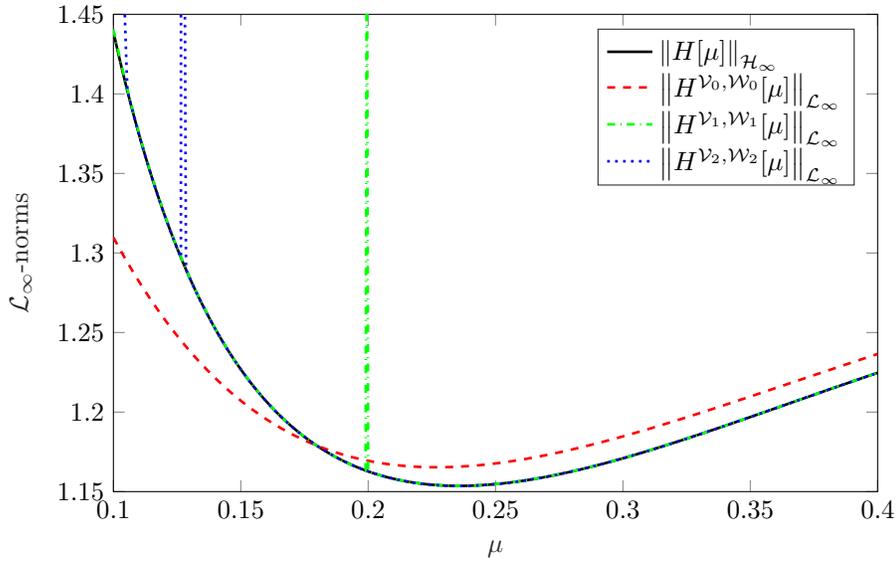}
	  \caption{ 
	  		The plots of the full function ${\| H[\cdot] \|}_{{\mathcal H}_\infty}$, as well as
			the reduced functions $\left\| H^{{\mathcal V}_0, {\mathcal W}_0}[\cdot] \right\|_{{\mathcal L}_\infty}$, $\left\| H^{{\mathcal V}_1, {\mathcal W}_1}[\cdot] \right\|_{{\mathcal L}_\infty}$,
			and $\big\| H^{{\mathcal V}_2, {\mathcal W}_2} [\cdot] \big\|_{{\mathcal L}_\infty}$
			in the interval $[0.1,0.4]$ for Example \ref{ex:synth} with $\sn = 500$.
	   	     }
	     \label{fig:synth_progress}
\end{figure}

\section{Concluding Remarks}
In this work we have developed new subspace restriction techniques to minimize the $\cH_\infty$-norm of transfer functions 
of large-scale parameter-dependent linear systems. We have given a detailed analysis of the rate of convergence of these 
methods, demonstrated the validity of the deduced rate of convergence results in practice by various numerical examples, 
which could all be solved very efficiently. The methods presented here make the design 
of optimal $\cH_\infty$-controllers for large-scale systems partly feasible. A fully feasible method to design optimal 
$\cH_\infty$-controllers for large-scale systems should also take stability considerations into account. We intend
to address stability issues in future.

\section*{Code Availability}
The MATLAB implementation of our algorithm and the computational results are publicly available under the URL \url{http://www.tu-berlin.de/?202212&L=1}.

\appendix

\section{Proof of Lemma~\ref{thm:bound_hd}}
By the continuity of $(\mu,\omega) \mapsto \sigma_{\min}(D(\mu,\omega))$,
	there exists a neighborhood $\widetilde{\mathcal N}$ of $(\mu_\ast,\omega_\ast)$ such that 
	$\sigma_{\min}(D(\mu,\omega)) \geq \beta/2$ for all $(\mu, \omega) \in \widetilde{\mathcal N}$.
	Consequently, the mapping $(\mu, \omega) \mapsto H[\mu]({\rm i} \omega)$ is continuously differentiable 
	and $\sigma(\cdot,\cdot),\, \sigma_2(\cdot,\cdot)$ are continuous in $\widetilde{\mathcal N}$. 
	The continuity of $\sigma(\cdot,\cdot),\, \sigma_2(\cdot,\cdot)$ 
	implies that $\sigma(\mu,\omega)$ remains a simple singular value of $H[\mu](\ri\omega)$, hence it is bounded away from zero in a neighborhood ${\mathcal N} \subseteq \widetilde{\mathcal N}$ of $(\mu_\ast, \omega_\ast)$. 
	Formally,
    \begin{equation}\label{eq:sval_gapl}
	\sigma(\mu, \omega) - \sigma_2(\mu, \omega)	  \geq   \widehat{\varepsilon}
	\quad	\forall (\mu, \omega) \in {\mathcal N}
    \end{equation}
for some $\widehat{\varepsilon} > 0$. 

Moreover, 
by employing the interpolation properties 
\begin{equation*}
	\sigma^{{\mathcal V}_k, {\mathcal W}_k}\big(\mu^{(k)},\omega^{(k)}\big) = 
			\sigma\big(\mu^{(k)},\omega^{(k)}\big)	\quad \text{and} \quad 
	\sigma^{{\mathcal V}_k, {\mathcal W}_k}_2\big(\mu^{(k)},\omega^{(k)}\big) = 
			\sigma_2\big(\mu^{(k)},\omega^{(k)}\big),
\end{equation*}
as well as the uniform Lipschitz continuity of $\sigma^{{\mathcal V}_k, {\mathcal W}_k}(\cdot,\cdot)$,
$\sigma^{{\mathcal V}_k, {\mathcal W}_k}_2(\cdot,\cdot)$
(i.\,e., parts \textbf{(iii)} and \textbf{(iv)} of Lemma~\ref{lemma:Lipschitz_cont}), there exists a region 
$\overline{{\mathcal B}}(\mu_\ast, \widetilde{\eta}_\mu) \times \overline{{\mathcal B}}(\omega_\ast, \widetilde{\eta}_\omega)$ 
in which $\sigma^{{\mathcal V}_k, {\mathcal W}_k}(\mu,\omega)$ is a simple, hence also a positive
singular value of $H^{{\mathcal V}_k, {\mathcal W}_k}[\mu](\ri \omega)$.
More precisely, we have
\begin{equation}\label{eq:sval_gap2l}
	\sigma^{{\mathcal V}_k, {\mathcal W}_k}(\mu, \omega) - \sigma^{{\mathcal V}_k, {\mathcal W}_k}_2(\mu, \omega)	 
							\geq  \varepsilon
	\quad	\forall (\mu, \omega) \in 
			\overline{{\mathcal B}}(\mu_\ast, \widetilde{\eta}_\mu) \times \overline{{\mathcal B}}(\omega_\ast, \widetilde{\eta}_\omega)
\end{equation}
for some $\varepsilon \in (0, \widehat{\varepsilon} )$, where the constants
$\varepsilon,\, \widetilde{\eta}_\mu,\, \widetilde{\eta}_\omega$ do not depend on $\mu^{(1)}$.
However, here it is assumed that ${\| \mu^{(1)} - \mu_\ast \|}_2$ is small enough in order to ensure
${\| \mu^{(k)} - \mu_\ast \|}_2 \ll \widehat{\varepsilon}$.

Let us now prove that $| \sigma_\omega(\cdot, \cdot) |$ and $| \sigma_{\omega \mu_1}(\cdot, \cdot) |$
are bounded from above uniformly in a neighborhood of $(\mu_\ast, \omega_\ast)$.
Our approach is based on the analytic continuation of the mapping
\[
	(\mu, s) \; \mapsto \; 
			\left[
				\begin{array}{cc}
								0	&	H^{{\mathcal V}_k, {\mathcal W}_k}[\mu](s)	\\
								H^{{\mathcal V}_k, {\mathcal W}_k}_\ast[\mu](s)	&	0	\\
				\end{array}
			\right]
				=:
			M^{{\mathcal V}_k, {\mathcal W}_k}[\mu](s)
\]
into the complex plane for $(\mu,s) \in \C^\sd \times \C$ near $(\mu_*, \ri \omega_*)$, where 
\begin{equation*}
	\begin{split}
	H^{\cV_k,\cW_k}_\ast[\mu](s)& := B^{W_k}_\ast(\mu) D^{V_k,W_k}_\ast(\mu,s)^{-1} C^{V_k}_\ast(\mu) 
	\quad \text{with} \\
	D^{V_k,W_k}_\ast(\mu,s) & := -s E^{V_k,W_k}_\ast(\mu)-A^{V_k,W_k}_\ast(\mu),
	\end{split}
\end{equation*}
and
\begin{equation*}
  \begin{split}
		E^{V_k,W_k}_\ast(\mu) & := f_1(\mu) ( W^*_k E_1 V_k )^\ast + \ldots + f_{\kappa_E}(\mu) ( W^*_k E_{\kappa_E} V_k )^\ast, \\
		A^{V_k,W_k}_\ast(\mu) & := g_1(\mu) ( W^*_k A_1 V_k )^\ast + \ldots + g_{\kappa_A}(\mu) ( W^*_k A_{\kappa_A} V_k )^\ast, \\
		B^{W_k}_\ast(\mu) & := h_1(\mu) ( W^*_k B_1 )^\ast + \ldots + h_{\kappa_B}(\mu) ( W^*_k B_{\kappa_B} )^\ast, \\
		C^{V_k}_\ast(\mu) & := k_1(\mu) ( C_1 V_k )^\ast + \ldots + k_{\kappa_C}(\mu) ( C_{\kappa_C} V_k )^\ast.
	\end{split}
\end{equation*}
Note that $\sigma^{{\mathcal V}_k, {\mathcal W}_k} (\mu, \omega)$ and  
$\sigma^{{\mathcal V}_k, {\mathcal W}_k}_2(\mu, \omega)$ correspond to the largest and second largest
eigenvalues of $M^{{\mathcal V}_k, {\mathcal W}_k}[\mu]({\rm i}\omega)$ for real $\omega$, as indeed
$H^{\cV_k,\cW_k}_\ast[\mu]({\rm i} \omega) = \big\{ H^{{\mathcal V}_k, {\mathcal W}_k}[\mu]({\rm i}\omega) \big\}^\ast$.
These Hermiticity properties are lost, when we replace $f_j,\, g_j,\, h_j,\, k_j$ with their analytic continuations 
$\widehat{f}_j,\, \widehat{g}_j,\, \widehat{h}_j,\, \widehat{k}_j$ or if we choose $s  \not\in \ri\R := \{ \ri \omega \: | \: \omega \in {\mathbb R} \}$. 
Let us denote the resulting extensions of $H^{{\mathcal V}_k, {\mathcal W}_k}$,
$H^{{\mathcal V}_k, {\mathcal W}_k}_\ast$, $M^{{\mathcal V}_k, {\mathcal W}_k}$
with $\widehat{H}^{{\mathcal V}_k, {\mathcal W}_k}$,
$\widehat{H}^{{\mathcal V}_k, {\mathcal W}_k}_\ast$, $\widehat{M}^{{\mathcal V}_k, {\mathcal W}_k}$.
As the subsequent arguments are for these complex continuations, in the rest of the proof
 $\overline{{\mathcal B}}_{\C}(\mu_\ast,\eta) := \{ \mu \in {\mathbb C}^\sd \; | \; {\| \mu - \mu_\ast \|}_2 \leq \eta \} 
     \quad \text{and} \quad 
 \overline{{\mathcal B}}_{\C}(\ri \omega_\ast,\eta) := \{ s \in {\mathbb C} \; | \; |s - \ri\omega_\ast | \leq \eta \}$
now denote the balls in the complex Euclidean spaces for a given radius $\eta > 0$.  
It is straightforward to verify that the uniform Lipschitz continuity of 
$(\mu, \omega) \mapsto H^{{\mathcal V}_k, {\mathcal W}_k}[\mu](\ri\omega)$ established in parts \textbf{(i)} and \textbf{(ii)} of
Lemma~\ref{lemma:Lipschitz_cont} extend to its complex counter-part, in particular, there exist $\gamma$, 
$\widehat{\eta}_\mu$, $\widehat{\eta}_\omega$
which are independent of $\mu^{(1)}$ such that
\begin{multline*}
	\big\| \widehat{H}^{{\mathcal V}_k, {\mathcal W}_k}\big[\widetilde{\mu}\big]\big(\widetilde{s}\big) - 
 \widehat{H}^{{\mathcal V}_k, {\mathcal W}_k}[\mu](s) \big\|_2   \\
 		\leq \big\| \widehat{H}^{{\mathcal V}_k, {\mathcal W}_k}\big[\widetilde{\mu}\big]\big(\widetilde{s}\big) - 
 \widehat{H}^{{\mathcal V}_k, {\mathcal W}_k}[\mu]\big(\widetilde{s}\big) \big\|_2
 				+
		\big\| \widehat{H}^{{\mathcal V}_k, {\mathcal W}_k}[\mu]\big(\widetilde{s}\big) - 
 \widehat{H}^{{\mathcal V}_k, {\mathcal W}_k}[\mu](s) \big\|_2	\\	
		 \leq  \gamma \big( \big\| \widetilde{s} - s \big\|_2 + \big| \widetilde{\omega} - \omega \big| \big)
\end{multline*}
for all $\widetilde{\mu},\, \mu \in \overline{{\mathcal B}}_{\C}(\mu_\ast,\widehat{\eta}_\mu) \subset {\mathbb C}^\sd$, and 
for all $\widetilde{s},\, s \in \overline{{\mathcal B}}_{\C}(\ri\omega_\ast,\widehat{\eta}_\omega) \subset {\mathbb C}$.
Analogous uniform Lipschitz continuity assertion also holds for $\widehat{H}^{{\mathcal V}_k, {\mathcal W}_k}_\ast$.
Consequently, there exist $\gamma$, $\widehat{\eta}_\mu$, $\widehat{\eta}_\omega$ which are independent of $\mu^{(1)}$ 
such that
\begin{multline}\label{eq:M_uniform_Lipschitz}
	\big\| \widehat{M}^{{\mathcal V}_k, {\mathcal W}_k}\big[\widetilde{\mu}\big]\big(\widetilde{s}\big) - 
			 \widehat{M}^{{\mathcal V}_k, {\mathcal W}_k}[\mu](s) \big\|_2	
			 \leq	 \gamma \big( \big\| \widetilde{\mu} - \mu \big\|_2 + \big| \widetilde{s} - s \big| \big)	 \\
	\forall\, \widetilde{\mu},\, \mu \in \overline{{\mathcal B}}_{\C}(\mu_\ast,\widehat{\eta}_\mu) \subset {\mathbb C}^\sd, \quad
	\forall\, \widetilde{s},\, s \in \overline{{\mathcal B}}_{\C}(\ri\omega_\ast,\widehat{\eta}_\omega) \subset {\mathbb C}.
\end{multline}

Now, for $(\mu,s) \in  \overline{{\mathcal B}}_{\C}(\mu_\ast,\widehat{\eta}_\omega) \times \overline{{\mathcal B}}_{\C}(\ri\omega_\ast,\widehat{\eta}_\omega)$, 
let us consider the eigenvalue $\widehat{\sigma}^{{\mathcal V}_k, {\mathcal W}_k}(\mu,s)$ 
of $\widehat{M}^{{\mathcal V}_k, {\mathcal W}_k}[\mu](s)$
corresponding to the eigenvalue $\sigma^{{\mathcal V}_k, {\mathcal W}_k}(\mu,\omega)$ 
of $M^{{\mathcal V}_k, {\mathcal W}_k}[\mu](\ri\omega)$, that is, $\widehat{\sigma}^{{\mathcal V}_k, {\mathcal W}_k}(\cdot,\cdot)$ is obtained by the analytic continuation of ${\sigma}^{{\mathcal V}_k, {\mathcal W}_k}(\cdot,\cdot)$ into the complex plane.  
This eigenvalue function is no more real-valued, since $\widehat{M}^{{\mathcal V}_k, {\mathcal W}_k}[\mu](s)$
is not necessarily a Hermitian matrix. However, by \eqref{eq:sval_gap2l} and \eqref{eq:M_uniform_Lipschitz},
as well as Theorem~5.1 in \cite[Chapter 4]{Stewart1990}, there exist $\eta_{\mu, m} \leq \min \big\{ \widetilde{\eta}_\mu, \widehat{\eta}_\mu \big\}$
and $\eta_{\omega,m} \leq \min \big\{ \widetilde{\eta}_\omega, \widehat{\eta}_\omega \big\}$
such that the eigenvalue $\widehat{\sigma}^{{\mathcal V}_k, {\mathcal W}_k}(\mu,s)$ 
remains simple for all $\mu \in \overline{{\mathcal B}}_{\C}(\mu_\ast,\eta_{\mu,m})$ and all 
$s \in {\mathcal B_{\C}}(\ri\omega_\ast, \eta_{\omega,m})$.
We remark that $\eta_{\mu,m}$ and $\eta_{\omega,m}$ are independent of ${\mathcal V}_k,\, {\mathcal W}_k$ and 
hence are independent of $\mu^{(1)}$.   
Now let us consider any $\eta_\mu \in (0, \eta_{\mu,m})$ and any $\eta_\omega \in (0, \eta_{\omega,m})$. By the analyticity of 
$\widehat{\sigma}^{{\mathcal V}_k, {\mathcal W}_k}(\cdot,\cdot)$ in the interior of 
$\overline{{\mathcal B}}_{\C}(\mu_\ast,\eta_{\mu,m}) \times \overline{{\mathcal B}}_{\C}(\ri\omega_\ast, \eta_{\omega,m})$,
for a given $\widetilde{\mu} \in \overline{{\mathcal B}}_{\C}(\mu_\ast,\eta_\mu)$ and 
$\widetilde{s} \in \overline{{\mathcal B}}_{\C}(\ri\omega_\ast, \eta_{\omega}/2)$, by Cauchy's integral formula we have
\begin{equation}\label{eq:Cauchy_int1}
	\widehat{\sigma}^{{\mathcal V}_k, {\mathcal W}_k}_{s}(\widetilde{\mu}, \widetilde{s})
			=\frac{1}{2\pi {\rm i}} \oint_{\big|s-\widetilde{s}\big| = \eta_\omega/2}
	\frac{\widehat{\sigma}^{{\mathcal V}_k, {\mathcal W}_k}(\widetilde{\mu},s)}{(s-\widetilde{s})^2} \mathrm{d}s.
\end{equation}
We claim that the term $\widehat{\sigma}^{{\mathcal V}_k, {\mathcal W}_k}(\widetilde{\mu},s)$ inside the integral
in modulus is uniformly bounded from above. To this end, as  
$\big| \widehat{\sigma}^{{\mathcal V}_k, {\mathcal W}_k}\big(\widetilde{\mu},s\big) \big| \leq \big\| \widehat{M}^{{\mathcal V}_k, {\mathcal W}_k}\big[\widetilde{\mu}\big](s) \big\|_2$, it suffices to show
the uniform boundedness of $\big\| \widehat{M}^{{\mathcal V}_k, {\mathcal W}_k}[\widetilde{\mu}](s) \big\|_2$.
Letting $\beta := \sigma_{\min}(D(\mu_\ast, \omega_\ast))$ and following the arguments at the beginning of the 
proof of Lemma~\ref{lemma:Lipschitz_cont}, there exists a neighborhood $\widehat{{\mathcal N}} \subset {\mathbb C}^\sd \times {\mathbb C}$ 
of $(\mu_\ast, \ri\omega_\ast)$ such that $\sigma_{\min}\big(D^{V_k,W_k} (\mu,s)\big) \geq \beta/2$ for all 
$(\mu, s) \in \widehat{{\mathcal N}}$. Without loss of generality, we assume 
$\widehat{\mathcal N} = \overline{{\mathcal B}}_{\C}(\mu_\ast,\eta_\mu) \times \overline{{\mathcal B}}_{\C}(\ri\omega_\ast,\eta_{\omega})$
(as we can choose $\eta_\mu$ and $\eta_\omega$ as small as we wish). Hence,
\begin{multline*}
	\big\| \widehat{H}^{{\mathcal V}_k, {\mathcal W}_k}[\mu](s) \big\|_2  \leq 2 \frac{ \big\| C^{V_k}(\mu) \big\|_2 \big\| B^{W_k}(\mu) \big\|_2 }{\beta}
						\leq 2 \frac{M_C M_B}{\beta}
	\\ \forall\, \mu \in \overline{{\mathcal B}}_{\C}(\mu_\ast,\eta_\mu),\; \forall\, s \in \overline{{\mathcal B}}_{\C}(\ri\omega_\ast,\eta_{\omega}),
\end{multline*}
where $M_{C} := \max \left\{ \| C(\mu) \|_2  \; | \; 
				\mu \in \overline{{\mathcal B}}_{\C}(\mu_\ast,\eta_\mu) \right\}$,
	  $M_{B} := \max \left\{ \| B(\mu) \|_2  \; | \; 
				\mu \in \overline{{\mathcal B}}_{\C}(\mu_\ast,\eta_\mu) \right\}$.
In an analogous fashion, the same upper bound also holds uniformly for 
$\big\| \widehat{H}^{{\mathcal V}_k, {\mathcal W}_k}_\ast[\mu](s) \big\|_2$ 		
for all $\mu \in \overline{{\mathcal B}}_{\C}(\mu_\ast,\eta_\mu)$ and all $s \in \overline{{\mathcal B}}_{\C}(\ri\omega_\ast,\eta_{\omega})$,
which gives rise to
\[
	\big\| \widehat{M}^{{\mathcal V}_k, {\mathcal W}_k}[\mu](s) \big\|_2  \leq  
			2 \frac{M_C M_B}{\beta} =: M
		\quad \forall\, \mu \in \overline{{\mathcal B}}_{\C}(\mu_\ast,\eta_\mu),\; \forall\,s \in \overline{{\mathcal B}}_{\C}(\ri\omega_\ast,\eta_{\omega}).
\]
We deduce from \eqref{eq:Cauchy_int1} that
\begin{multline*}
	\big| \widehat{\sigma}^{{\mathcal V}_k, {\mathcal W}_k}_{s}\big(\widetilde{\mu}, \widetilde{s}\big) \big|
			\leq
	\frac{1}{2\pi} \left\{ \max_{\big| s - \widetilde{s} \big| = \eta_{\omega}/2} \big|\widehat{\sigma}^{{\mathcal V}_k, {\mathcal W}_k}(\widetilde{\mu},s)\big| \right\}
	                              \frac{1}{(\eta_{\omega}/2)^2} (2 \pi \eta_{\omega}/2) \leq  \frac{2M}{\eta_\omega}   \\
	\forall\, \widetilde{\mu} \in \overline{{\mathcal B}}_{\C}(\mu_\ast,\eta_\mu), \;
	\forall\, \widetilde{s} \in \overline{{\mathcal B}}_{\C}(\ri\omega_\ast, \eta_{\omega}/2),                              
\end{multline*}
hence also $\big| {\sigma}^{{\mathcal V}_k, {\mathcal W}_k}_{\omega}\big(\widetilde{\mu}, \widetilde{\omega}\big) \big| \le 2M/\eta_\omega$ for all $\widetilde{\mu} \in \overline{{\mathcal B}}(\mu_\ast,\eta_\mu)$ and all $\widetilde{\omega} \in \overline{{\mathcal B}}(\omega_\ast, \eta_{\omega}/2)$.

Now let us consider the mixed derivative $\sigma_{s \mu_1}^{{\mathcal V}_k, {\mathcal W}_k}$,
specifically for a given $\widehat{\mu} \in \overline{{\mathcal B}}_{\C}(\mu_\ast,\eta_\mu/2)$ and 
$\widehat{s} \in \overline{{\mathcal B}}_{\C}(\ri\omega_\ast, \eta_{\omega}/2)$, we have
\begin{equation}\label{eq:Cauchy_int2}
	\widehat{\sigma}^{{\mathcal V}_k, {\mathcal W}_k}_{s\mu_1}\big(\widehat{\mu}, \widehat{s}\big)
			=\frac{1}{2\pi {\rm i}} \oint_{\mathcal C}
	\frac{\widehat{\sigma}^{{\mathcal V}_k, {\mathcal W}_k}_s\big(\mu,\widehat{s}\big)}{\big(\mu_1-\widehat{\mu}_1\big)^2} \mathrm{d}\mu_1,
\end{equation}
where the contour integral is over 
${\mathcal C} := \big\{ \mu \in {\mathbb C}^\sd \; \big| \; \big|\mu_1-\widehat{\mu}_1 \big| =\eta_\mu/2,\, \mu_j = \widehat{\mu}_j, \, j = 2,\dots, \sd \big\}$.
Taking the modulus of both sides in \eqref{eq:Cauchy_int2} yields
\begin{multline*}
	\big| \widehat{\sigma}^{{\mathcal V}_k, {\mathcal W}_k}_{s\mu_1}(\widehat{\mu}, \widehat{s}) \big|
		\leq
	\frac{1}{2\pi} \left\{ \max_{\mu \in {\mathcal C}} \big| \widehat{\sigma}^{{\mathcal V}_k, {\mathcal W}_k}_s(\mu,\widehat{s}) \big| \right\}
	 \frac{1}{(\eta_\mu/2)^2} (2 \pi \eta_\mu / 2)  \leq \frac{4M}{\eta_\mu \eta_\omega} 	\\
	\forall\, \widehat{\mu} \in \overline{{\mathcal B}}_{\C}(\mu_\ast,\eta_\mu/2), \;
	\forall\, \widehat{s} \in \overline{{\mathcal B}}_{\C}(\ri\omega_\ast, \eta_{\omega}/2).
\end{multline*}
The arguments above prove the uniform boundedness of 
$\big| \sigma^{{\mathcal V}_k,{\mathcal W}_k}_\omega(\cdot, \cdot) \big|,\, \big| \sigma^{{\mathcal V}_k,{\mathcal W}_k}_{\omega \mu_1}(\cdot, \cdot) \big|$.
The uniform boundedness of all other first three derivatives can be proven similarly. \hfill $\square$

\bibliography{Hinfmin}

\end{document}